\let\amsmarkboth\markboth
\let\markboth\amsmarkboth
   \def\bbl@arg{#1}%
   \def\bbl@arg{#2}%
   \edef\bbl@tempa{\the\toks@}%
   \edef\bbl@tempb{\the\toks8}%
   \protected@edef\bbl@tempa{%
     \noexpand\org@markboth{\bbl@tempa}{\bbl@tempb}}%
\DeclareRobustCommand*\ams@disablelinebreak{\def\\{ \ignorespaces}}
\def\maketitle{\par
   \@topnum\z@ %
   \@setcopyright
   \thispagestyle{firstpage}%
   \uppercasenonmath\shorttitle
   \ifx\@empty\shortauthors \let\shortauthors\shorttitle
   \else \andify\shortauthors
   \fi
   \@maketitle@hook
   \begingroup
   \@maketitle
   \@tempa
   \endgroup
   \c@footnote\z@
   \@cleartopmattertags
}
\numberwithin{equation}{section}
\newtheorem{thm}{Theorem}
\newtheorem{prop}{Proposition}
\newtheorem{lemme}{Lemma}
\newtheorem{defi}{Definition}
\newtheorem{coro}{Corollary}
\newcommand{\dsp}{\displaystyle}
\newcommand{\eps}{\varepsilon}
\newcommand{\R}{\mathbb{R}^2}
\newcommand{\dt}{\partial_t}
\newcommand{\loc}{\mathrm{loc}}
\newcommand{\dun}{\partial_1}
\newcommand{\ddeux}{\partial_2}
\newcommand{\E}{E_\eps(u)}
\newcommand{\er}{\mathcal{E}_{\eps,[U_d]}}
\newcommand{\ssum}{\sum_{i=1}^l}
\newcommand{\ai}{a_i}
\newcommand{\aio}{a_i^0}
\newcommand{\ue}{u_{\varepsilon}}
\newcommand{\ueo}{u_{\varepsilon}^0}
\newcommand{\uest}{u_{\varepsilon}^*}
\newcommand{\ust}{u^*}
\newcommand{\keps}{\frac{\delta}{|\log \eps|}}
\newcommand{\intr}{\int_{\R}}
\newcommand{\leps}{|\log \eps|}
\newcommand{\CGL}{$\mathrm{(CGL)}_\eps$ }
\begin{document}

\title{Dynamics of vortices for the Complex Ginzburg-Landau equation}
\author{Evelyne Miot}

\address[E. Miot]{Laboratoire J.-L. Lions UMR 7598,
Universit\'e Pierre et Marie Curie, 175 rue du Chevaleret, 75013
Paris, France} \email{miot@ann.jussieu.fr}

\begin{abstract}
    We study a complex Ginzburg-Landau equation in the plane, which has the
    form of a Gross-Pitaevskii equation with some dissipation added.  We
    focus on the regime corresponding to well-prepared unitary
    vortices and derive their asymptotic motion law.

    \medskip

    \noindent\tiny{\emph{2000 Mathematics Subject Classification:}
    35B20,35B40,35Q40,82D55. \\
\emph{Keywords:} Complex Ginzburg-Landau equation, Vortex dynamics.}
\end{abstract}

\date{September 29, 2008}
\maketitle


\section{Introduction}

\renewcommand{\theequation}{\arabic{equation}}


In this paper, we study the dynamics of vortices for a complex
Ginzburg-Landau equation on the plane, namely
\begin{equation*}
 \qquad \keps \dt \ue + \alpha i\dt \ue=\Delta \ue
+\frac{1}{\eps^2}\ue (1-|\ue|^2) \qquad \qquad
\mathrm{(\mathrm{CGL})_\eps}
\end{equation*}
where $\dsp \ue:\mathbb{R}_+\times \R \rightarrow \R$ is a complex
valued map. Here $\delta$, $\alpha$ and $\eps$ denote positive real
parameters, and we will mainly focus on the asymptotics as $\eps$
tends to zero while $\delta$ and $\alpha$ are kept fixed. Up to a
change of scale, we may further assume that $\alpha=1$, and we set
$k_\eps=\frac{\delta}{\leps}$.
 The complex Ginzburg-Landau equation \CGL reduces to the
Gross-Pitaevskii equation when $\delta=0$ and to the parabolic
Ginzburg-Landau equation when $\alpha=0$. Both the Gross-Pitaevskii
and the Ginzburg-Landau equations have been widely investigated in
the regime which we will consider (see e.g. \cite{CJ,LX,JS2,BJS} for
the Gross-Pitaevskii equation and \cite{JSon98,serf,BOS2} and
references therein for the parabolic Ginzburg-Landau equation).
Typical functions $u_\eps$ in this regime are given explicitely by
\begin{equation*}
\uest(a_i,d_i):=\prod_{i=1}^l u_{\eps,d_i}(z-a_i)=\prod_{i=1}^l
f_{1,d_i}\left(\frac{|z-a_i|}{\eps}\right)
\left(\frac{z-a_i}{|z-a_i|}\right)^{d_i},
\end{equation*}
where the points $a_i \in \R$, $d_i=\pm 1,$ and the functions
$f_{1,d_i}:\mathbb{R}^+ \mapsto [0,1]$ which satisfy
$f_{1,d_i}(0)=0, f_{1,d_i}(+\infty)=1$ are in some sense optimal
profiles. The points $a_i$ are called the vortices of the fields
$u_\eps$ and the $d_i$ their degrees. This class of functions
$u_\eps$ is of course not invariant by any of the flows
corresponding to these equations, but not far from it\footnote{see
the notion of well-preparedness in Section \ref{sect:resu}}, and it
is in particular possible to define notions of point vortices for
solutions of \CGL, at least in an asymptotic way as $\eps \to 0$,
and to study their dynamics. This dynamics is eventually governed by
a system of ordinary differential equations, at least before
collisions.

Two relevant quantities in the study of vortex dynamics are the
Ginzburg-Landau energy
\begin{equation*}
\E=\int_{\R}e_\eps(u)\,dx=\int_{\R}\frac{|\nabla
u|^2}{2}+\frac{(1-|u|^2)^2}{4\eps^2} \,dx,
\end{equation*}
through its energy density $e_\eps(u)$, and the Jacobian
\begin{equation*}
Ju=\frac{1}{2}\textrm{curl}(u\times \nabla u)
\end{equation*}
through its primitive $j(u) = u\times \nabla u.$ In the regime which
we will consider, one has
$$
\frac{e_\eps(u_\eps)}{\leps}\, dx \rightharpoonup \pi \sum_{i=1}^l
\delta_{a_i} \quad\text{and}\quad J u_\eps\, dx \rightharpoonup \pi
\sum_{i=1}^l d_i \delta_{a_i}
$$
as $\eps \to 0,$ which describes asymptotically the positions and
the degrees of the vortices. The quantity $e_\eps(u_\eps)$ was
especially used in the study of the parabolic Ginzburg-Landau
equation while $j(u_\eps)$ was used in the study of the
Gross-Pitaevskii equation. Here, we will rely on both of them.

In the case of the domain being the entire plane $\R$, which we
consider here, the reference fields $u_\eps(a_i,d_i)$ have infinite
Ginzburg-Landau energy $E_\eps$  whenever $d= \sum d_i \neq 0.$ In
\cite{BS}, a notion of renormalized energy\footnote{not to be merged
with the notion in \cite{BBH}.} for such data was introduced in
order to solve the Cauchy problem for the Gross-Pitaevskii equation.
This notion was later used in \cite{BJS} in order to study the
dynamics of vortices for the Gross-Pitaevskii equation in the plane.
Our definition of well-prepared data below and part of the
subsequent analysis is borrowed from \cite{BJS}.

\medskip

The complex Ginzburg-Landau equation \CGL, either in the plane or in
the real line, has been vastly considered in the literature,
especially as a model for amplitude oscillation in weakly nonlinear
systems undergoing a Hopf bifurcation (see e.g. \cite{ArKr} for a
survey paper). The mathematical analysis of vortices for \CGL was
first sketched in \cite{LX}, where it was presented as an
alternative approach (a regularized version) for the study of the
Gross-Pitaevskii equation. We believe however that the conclusion
regarding the dynamics of vortices for \CGL in \cite{LX} is
erroneous, and that Theorem \ref{thm:main} yields the corrected
version.

\medskip

 After the completion of this work we were informed that Spirn,
Kurzke, Melcher and Moser \cite{SpKu} independently obtained similar
results concerning the dynamics of vortices for \CGL in bounded,
simply connected domains.

\subsection{Renormalized energy and Cauchy Problem}

As mentioned in the introduction, for $d=\sum d_i \neq 0$ the
Ginzburg-Landau energy of $\uest(a_i,d_i)$ is infinite. It can
actually be computed that
\begin{equation*}
    \intr \frac{|\nabla |\uest(a_i,d_i) ||^2}{2}+ \frac{(1-|\uest(a_i,d_i)|^2)^2}{4\eps^2}\,dz <+\infty,
\end{equation*}
whereas as $|z|\to + \infty$,
\begin{equation*}
|\nabla \uest(a_i,d_i)|^2(z) \sim \frac{d^2}{|z|^2},
\end{equation*}
so that
\begin{equation*}
    \intr \frac{|\nabla \uest(a_i,d_i) |^2}{2} = +\infty.
\end{equation*}
The renormalized energy introduced in \cite{BS} is obtained by
substracting the diverging part of the gradient at infinity. More
precisely, given a smooth map $U_d$ such that
\begin{equation*}
U_d=\left(\frac{z}{|z|}\right)^d\qquad \textrm{on }\: \R\setminus
B(0,1),
\end{equation*}
we have as $|z|\to +\infty$
\begin{equation*}
|\nabla \uest(a_i,d_i)|^2\sim |\nabla U_d|^2
\end{equation*}
and one may define
\begin{equation}
\mathcal{E}_{\eps,U_d}(\uest(a_i,d_i)):=\lim_{R\to +\infty}
\int_{B(R)} e_\eps (\uest(a_i,d_i))-\frac{|\nabla
U_d|^2}{2}<+\infty.
\end{equation}
This definition extends to a larger class of functions, and is a
useful ingredient in solving the Cauchy problem.  Following
\cite{BS}, we define
\begin{eqnarray*}
\mathcal{V}=\{U\in L^{\infty} (\R,\mathbb{C}),\: \nabla^k U\in
L^2,\: \forall k\geq 2,\: (1-|U|^2)\in L^2,\: \nabla |U| \in L^2\}.
\end{eqnarray*}
In particular, the space $\mathcal{V}$ contains all the maps $\uest$
as well as the reference maps $U_d$. We state below and prove in the
Appendix global well-posedness in the class $\mathcal{V}+
H^1(\R)$\footnote{In \cite{GV}, the Cauchy problem in local spaces
is investigated for a more general class of complex Ginzburg-Landau
equations.}.
\begin{thm}
\label{cauchyproblem}
 Let $u_0=U+w_0$ be in $\mathcal{V}+H^1(\R)$.
Then there exists a unique global solution $u(t)$ to \CGL such that
$u(t)\in \{U\}+H^1(\R)$. If we write $u(t)=U+w(t)$, then $w$ is the
unique solution in $C^0(\mathbb{R}_+,H^1(\R))$ to
\begin{eqnarray}
\left\{ \begin{array}{l} \dsp (k_\eps +i)\dt w=\Delta w+f_{U}(w)
\\
w(0)=w_0, \end{array} \right. \label{pert}
\end{eqnarray}
where
\begin{equation*}
f_{U}(w)=\Delta U+\frac{1}{\eps^2}(U+w)(1-|U+w|^2).
\end{equation*}
In addition, $w$ satisfies
\begin{eqnarray*}
w\in L_{\loc}^1(\mathbb{R}_+,H^2(\R))\cap
L_{\loc}^{\infty}(\mathbb{R}_+^*,L^{\infty}(\R)),\: \dt w \in
L_{\loc}^1(\mathbb{R}_+,L^2(\R))
\end{eqnarray*}
and
\begin{equation*}
w\in C^\infty(\mathbb{R}_+^*,C^{\infty}(\R)).
\end{equation*}
Finally, the functional $E_{\eps,U}(u):=E_{\eps,U}(w)$ defined by
\begin{eqnarray*}
E_{\eps,U}(u)=\intr \frac{|\nabla w|^2}{2}-\intr \Delta U \cdot
w+\intr \frac{ (1-|U+w|^2)^2}{4\eps^2}
\end{eqnarray*}
satisfies
\begin{eqnarray*}
 \frac{d}{dt}E_{\eps,U}(u) =-k_\eps \intr |\dt w|^2
\,dx,\qquad \forall t\geq 0.
\end{eqnarray*}
\end{thm}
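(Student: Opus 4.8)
The plan is to reformulate \CGL for the perturbation $w=u-U$ as the semilinear parabolic equation \eqref{pert}, and to solve it by a fixed-point argument in $C^0([0,T],H^1(\R))$ for small $T$, then bootstrap regularity and propagate the solution globally using the energy identity. First I would observe that dividing \CGL by $(k_\eps+i)$ (which is harmless since $k_\eps>0$, so $|k_\eps+i|^2=k_\eps^2+1>0$) turns it into $\dt w = \frac{1}{k_\eps+i}(\Delta w + f_U(w))$; although the coefficient in front of $\Delta$ is complex, its real part $k_\eps/(k_\eps^2+1)$ is strictly positive, so the associated semigroup $e^{tA}$, $A=\frac{1}{k_\eps+i}\Delta$, is analytic and enjoys the usual smoothing estimates $\|e^{tA}\|_{H^s\to H^{s+\sigma}}\lesssim t^{-\sigma/2}$. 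The nonlinearity $f_U(w)=\Delta U+\frac{1}{\eps^2}(U+w)(1-|U+w|^2)$ is, for fixed $U\in\mathcal V$, a smooth map from $H^1(\R)$ into $L^2(\R)$: the term $\Delta U$ lies in $L^2$ by definition of $\mathcal V$, and the cubic term is controlled because $H^1(\R)\hookrightarrow L^p$ for all $p<\infty$ together with the boundedness of $U$ and the fact that the "dangerous" pieces $U(1-|U|^2)$ and $\nabla|U|$-type contributions are square-integrable by the definition of $\mathcal V$. This gives local existence and uniqueness in $C^0([0,T],H^1(\R))$ by the contraction mapping principle, with $T$ depending only on $\|w_0\|_{H^1}$ and on $U$.

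Next I would upgrade the regularity. Feeding the mild solution back into Duhamel's formula and using the analytic smoothing of $e^{tA}$ gives $w\in L^1_{\loc}(\mathbb R_+,H^2(\R))$ and $\dt w\in L^1_{\loc}(\mathbb R_+,L^2(\R))$, hence $w\in C^0([0,T],H^1)$ genuinely solves \eqref{pert} in the strong sense; the instantaneous $L^\infty$ bound $w\in L^\infty_{\loc}(\mathbb R_+^*,L^\infty(\R))$ follows from $H^2\hookrightarrow L^\infty$ in dimension two (with a parabolic time weight), and a standard bootstrap — differentiating the equation, using that $f_U$ is smooth in $w$ and that $U$ is $C^\infty$ away from $B(0,1)$ with all higher derivatives in $L^2$ — yields $w\in C^\infty(\mathbb R_+^*,C^\infty(\R))$.

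To pass from local to global, I would compute $\frac{d}{dt}E_{\eps,U}(w)$ along the flow. Multiplying \eqref{pert} by $\overline{\dt w}$, taking the real part and integrating over $\R$, the terms combine (after integration by parts, legitimate by the regularity just established) into $\frac{d}{dt}\big(\int\frac{|\nabla w|^2}{2}-\int\Delta U\cdot w+\int\frac{(1-|U+w|^2)^2}{4\eps^2}\big)=-k_\eps\int|\dt w|^2$; the key algebraic point is that the $i\,\dt w$ part of $(k_\eps+i)\dt w$ contributes $\mathrm{Re}\int i|\dt w|^2=0$, so only the dissipative $k_\eps$ part survives on the right-hand side, while the left-hand side is exactly $\frac{d}{dt}E_{\eps,U}(w)$ by direct differentiation. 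Since $E_{\eps,U}$ is bounded below (its three terms are, respectively, nonnegative, controlled by $\|\Delta U\|_{L^2}\|w\|_{L^2}\le \frac14\|w\|_{L^2}^2 + C_U$ after Young, and nonnegative) and is nonincreasing, one gets an a priori bound on $\|\nabla w(t)\|_{L^2}$ and on $\|(1-|U+w|^2)\|_{L^2}$; combined with a separate a priori bound on $\|w(t)\|_{L^2}$ obtained by pairing the equation with $\overline w$ and using Gronwall, this bounds $\|w(t)\|_{H^1}$ on any finite interval, so the local solution extends to all of $\mathbb R_+$, and uniqueness in $C^0(\mathbb R_+,H^1)$ is inherited from the local statement. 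The main obstacle I anticipate is not any single estimate but the careful handling of the "renormalized" terms: one must check throughout that every integration by parts and every product estimate involving $U$ only ever uses the quantities that are finite for $U\in\mathcal V$ (namely $\nabla^k U\in L^2$ for $k\ge 2$, $1-|U|^2\in L^2$, $\nabla|U|\in L^2$, $U\in L^\infty$) and never the genuinely infinite quantity $\int|\nabla U|^2$; the functional $E_{\eps,U}$ and the structure of $f_U$ are designed precisely so that the divergent gradient-at-infinity cancels, and the bookkeeping that makes this cancellation rigorous is the delicate part.
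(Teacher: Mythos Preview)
Your proposal is correct and follows essentially the same route as the paper's Appendix: rewrite \eqref{pert} as $\dt w=(a+ib)(\Delta w+f_{U}(w))$ with $a>0$, use the smoothing of the associated (complex-coefficient heat) semigroup together with the fact that $f_U:H^1\to L^2$ is locally Lipschitz to run a fixed-point argument for local well-posedness, bootstrap to $H^2$ (hence $L^\infty$) and then to $C^\infty$ via Duhamel, compute the energy identity by pairing with $\overline{\dt w}$, and close the global argument by combining the energy dissipation with a separate $L^2$ Gronwall estimate obtained by pairing with $\overline w$. The only minor imprecision is your aside that $U$ is $C^\infty$ away from $B(0,1)$: a general $U\in\mathcal V$ need not be smooth, and the bootstrap uses only that $D^kU\in L^2$ for $|k|\ge2$ (together with $U\in L^\infty$, $1-|U|^2\in L^2$), exactly as you also note; this is what the paper uses as well.
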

\medskip
 As a matter of fact, it follows from an integration by part
that if $u\in \{U\} + H^1(\R)$ is as in Theorem \ref{cauchyproblem}
and if $U$ satisfies in addition $|\nabla U(x)| \leq
\frac{C}{\sqrt{|x|}}$, then
\begin{equation*}
E_{\eps,U}(u(t))\equiv\mathcal{E}_{\eps,U}(u(t))=\lim_{R\to +\infty}
\int_{B(R)} (e_\eps (u(t))-\frac{|\nabla U|^2}{2})\,dx.
\end{equation*}

\medskip

The functions $\uest(a_i,d_i)$ are not $H^1$ perturbations one of
the other, even for fixed $d=\sum d_i$, unless some algebraic
relations involving the $a_i$'s and $d_i$'s hold. In order to handle
a class of functions containing them all, it is useful to introduce
the following equivalence relation on the set $\mathcal{V}$ :
\begin{eqnarray*}
&&\forall U,U'\in \mathcal{V},\qquad U\sim U' \: \: \textrm{iff} \\
&&\textrm{deg}_{\infty}(U)=\textrm{deg}_{\infty}(U')\: \:
\textrm{and} \: \: |\nabla U|^2-|\nabla U'|^2 \in L^1(\R).
\end{eqnarray*}
Denoting by $[U]$ the corresponding equivalence class of $U$, we
observe that for any configuration $(a_i,d_i)$ such that $\sum
d_i=d$, we have $\uest(a_i,d_i)\in [U_d]$. Therefore the space
$[U_d]+H^1(\R)$ contains in particular all $H^1$ perturbations of
all reference maps $\uest$ of degree $d$ at infinity.

\medskip

For a map $u$ in $[U_d]+H^1(\R)$, we may now define
\begin{eqnarray*}
\mathcal{E}_{\eps,[U_d]}(u):=\lim_{R\to +\infty}
\int_{B(R)}e_\eps(u)-\frac{|\nabla U_d|^2}{2},
\end{eqnarray*}
which is a finite quantity. Moreover, for any solution $u=u(t) \in
C^0([U_d]+ H^1(\R))$, we infer from Theorem \ref{cauchyproblem} that
\begin{eqnarray*}
\frac{d}{dt}\mathcal{E}_{\eps,[U_d]}(u)=\frac{d}{dt}\mathcal{E}_{\eps,U}(u)=-k_\eps
\intr |\dt u|^2.
\end{eqnarray*}
The dissipation of $\er(u(t))$ is therefore exactly the same as the
dissipation for the usual Ginzburg-Landau energy in the case of
bounded, simply connected domains.

\subsection{Statement of the result}\label{sect:resu}

In the sequel, $A_n$ denotes the annulus $B(2^{n+1})\setminus
B(2^n)$ for $n\in \mathbb{N}$, so that $\R=B(2^{n_0})\cup
(\cup_{n\geq n_0} A_n)$.
\begin{defi}
Let $a_1,\ldots, a_l$ be $l$ distinct points in $\R$, $d_i \in
\{-1,+1\}$ for $i=1,\ldots,l$ and set $d=\sum d_i$. Let
$(u_\eps)_{0<\eps<1}$ be a family of maps in $[U_d]+H^1(\R)$. We say
that $(u_\eps)_{0<\eps<1}$ is well-prepared with respect to the
configuration $(a_i,d_i)$ if there exist $R=2^{n_0}>\max |a_i|$ and
a constant $K_0>0$ such that\footnote{Here, $E_\eps(u,B) \equiv
\int_B e_\eps(u).$}
\begin{equation}
\lim_{\eps \to 0} \| Ju_\eps - \pi \ssum d_i \delta_{a_i}
\|_{W_0^{1,\infty}(B(R))^\ast}=0, \label{W1}
\tag{\scriptsize{WP$_1$}}
\end{equation}
\begin{equation}
 \sup_{0<\eps<1} E_{\eps}(u_{\eps},A_n)\leq K_0 \qquad \forall
n\geq n_0, \label{W2} \tag{\scriptsize{WP$_2$}}
\end{equation}
and
\begin{equation}
 \lim_{\eps \to 0} \left( \er(u_\eps)-\er(\uest(a_i,d_i))\right)=0.
\label{W3} \tag{\scriptsize{WP$_3$}}
\end{equation}
\end{defi}
 We can now state our main theorem as follows
\begin{thm}\label{thm:main}
\label{theorem1} Let $(\ueo)_{0<\eps<1}$ in $[U_d]+H^1(\R)$ be a
family of well-prepared initial data with respect to the
configuration $(a_i^0,d_i)$ with $d_i=\pm 1$, and let
$(\ue(t))_{0<\eps<1}$ in $C(\mathbb{R}^{+},[U_d]+H^1(\R))$ be the
corresponding solution of \CGL. Let $\{a_i(t)\}_{\{i=1,\ldots,l\}}$
denote the solution of the ordinary differential equation
\begin{equation}
\begin{cases}
\displaystyle \pi \dot{a}_i(t)=C_i \big(\delta d_i \mathbb{I}_2-
\mathbb{J}_2\big)\nabla
_{a_i} W,\qquad C_i=\frac{-d_i}{1+\delta^2}\\
a_i(0)=a_i,\qquad i=1,\ldots,l
\end{cases}
\label{systemepointsvortex}
\end{equation}
where
\begin{eqnarray*}
\mathbb{I}_2=\begin{pmatrix} 1&0\\0&1
\end{pmatrix},\qquad \mathbb{J}_2=\begin{pmatrix} 0&-1\\1&0
\end{pmatrix}
\end{eqnarray*}
and $W$ is the Kirchhoff-Onsager functional defined by
\begin{eqnarray*}
W(a_i,d_i)=-\pi \sum_{i\neq j} d_i d_j \log |a_i-a_j|.
\end{eqnarray*}
We denote by $[0,T^\ast)$ its maximal interval of existence. Then,
for every $t\in [0,T^\ast)$, the family $(\ue(t))_{0<\eps<1}$ is
well-prepared with respect to the configuration $(a_i(t),d_i)$.
\end{thm}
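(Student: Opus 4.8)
The plan is a continuation argument. Fix $T<T^\ast$; on $[0,T]$ the solution of \eqref{systemepointsvortex} stays in a fixed compact subset of $\R$, the distances $|a_i(t)-a_j(t)|$ ($i\neq j$) and $\mathrm{dist}(a_i(t),\partial B(R))$ stay above some $\rho_0>0$, and the vector field of \eqref{systemepointsvortex} is Lipschitz on the admissible configurations. The aim is to prove that for every $t\in[0,T]$ the map $\ue(t)$ has $l$ well-defined vortices $a_i^\eps(t)$ with
\begin{equation*}
\sup_{t\in[0,T]}\Big(\ssum|a_i^\eps(t)-a_i(t)|+\big|\er(\ue(t))-\er(\uest(a_i(t),\di))\big|\Big)=o_\eps(1);
\end{equation*}
granted this, \eqref{W1}--\eqref{W3} at each $t$ follow from the expansion $\er(\uest(b_i,\di))=\pi l\,\leps+l\gamma+W(b_i,\di)+o_\eps(1)$ (uniform over admissible configurations, $\gamma$ a universal constant built from the profiles $f_{1,\di}$) together with the Lipschitz continuity of $W$, and letting $T\uparrow T^\ast$ finishes. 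Let $\mathcal{T}\subset[0,T]$ be the set of times up to which the displayed estimate holds with a fixed modulus; then $0\in\mathcal{T}$ by the well-preparedness hypothesis, $\mathcal{T}$ is relatively closed since $t\mapsto\ue(t)$ is continuous into $\{U\}+H^1(\R)$ (Theorem~\ref{cauchyproblem}), and the whole difficulty is the openness of $\mathcal{T}$.

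The first ingredient is energy monotonicity. By Theorem~\ref{cauchyproblem}, $t\mapsto\er(\ue(t))$ is nonincreasing and $\er(\ueo)-\er(\ue(t))=k_\eps\int_0^t\intr|\dt\ue|^2\,dx\,ds$. For $\tau\in\mathcal{T}$ and $t$ slightly beyond $\tau$, monotonicity and the bound at time $\tau$ give $\er(\ue(t))\leq\pi l\,\leps+l\gamma+W(a_i(\tau),\di)+o_\eps(1)$. Combined with \eqref{W2} — which confines all the energy to the fixed ball $B(R)$, so no energy and no spurious vorticity escape to infinity — and the sharp Ginzburg--Landau lower bounds (vortex balls à la Jerrard and Sandier, plus the renormalized interaction $W$), this forces $Ju_\eps(t)$ to be $o_\eps(1)$-close in $W_0^{1,\infty}(B(R))^\ast$ to $\pi\ssum\di\delta_{a_i^\eps(t)}$ for points $a_i^\eps(t)$ in the compact set with separation $\geq\rho_0/2$, and forces the energy excess to be $o_\eps(1)$. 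The dissipation identity moreover yields $k_\eps\int_0^\tau\intr|\dt\ue|^2=O(1)$, hence $\int_0^\tau\intr|\dt\ue|^2=O(\leps)$, and with the product-type lower bound $\intr|\dt\ue|^2\gtrsim\leps\ssum|\dot a_i^\eps|^2+o_\eps(\leps)$ this gives $\int_0^\tau\ssum|\dot a_i^\eps|^2\,dt=O(1)$: the curves $a_i^\eps$ are bounded uniformly in $\eps$ in $H^1$ in time, hence equicontinuous, with $a_i^\eps(0)=\aio+o_\eps(1)$ by \eqref{W1} at $t=0$. What remains is to identify their motion.

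The motion law is the core of the argument and rests on two balance laws. The \emph{energy law}: using the clearing-out above, a sharp Sandier--Serfaty-type lower bound and the matching upper bound for the reference maps $\uest(a_i(t),\di)$, one gets $\intr|\dt\ue|^2=\pi\leps\ssum|\dot a_i^\eps|^2+o_\eps(\leps)$, whence (since $k_\eps\leps=\delta$) $\frac{d}{dt}\er(\ue)=-\delta\pi\ssum|\dot a_i^\eps|^2+o_\eps(1)$. The \emph{momentum (stress--energy) law}: testing \CGL against $\chi_i\nabla\ue$, with $\chi_i$ a fixed cutoff equal to $1$ near $a_i(t)$ and supported in $B(a_i(t),\rho_0/2)$, and taking real parts, the spatial terms combine (via the equation) into the divergence of the stress tensor, whose flux against $\nabla\chi_i$ converges — by the classical reduction to the canonical harmonic field with phase $\sum_j\di\arg(z-a_j)$ — to $\nabla_{a_i}W$; the $k_\eps\dt\ue$ term contributes $-\delta\pi\,\dot a_i^\eps$ (the factor $k_\eps\sim\delta/\leps$ hitting the logarithmically divergent $\intr\chi_i|\nabla\ue|^2$-type integral, made explicit through the product estimate $\dt\ue\approx-\dot a_i^\eps\!\cdot\!\nabla\ue$), while the $i\,\dt\ue$ term contributes $-\pi\di\,\mathbb{J}_2\dot a_i^\eps$ (transport of $Ju_\eps\rightharpoonup\pi\ssum\di\delta_{a_i}$). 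Altogether, in integrated form,
\begin{equation*}
\pi\big(\delta\,\mathbb{I}_2+\di\,\mathbb{J}_2\big)\dot a_i^\eps=-\nabla_{a_i}W(a^\eps,\di)+o_\eps(1).
\end{equation*}
Since $\mathbb{J}_2^2=-\mathbb{I}_2$ and $\di^2=1$, one has $(\delta\mathbb{I}_2+\di\mathbb{J}_2)^{-1}=\frac{1}{1+\delta^2}(\delta\mathbb{I}_2-\di\mathbb{J}_2)$, and a direct computation rewrites the last display as $\pi\dot a_i^\eps=C_i(\delta\di\,\mathbb{I}_2-\mathbb{J}_2)\nabla_{a_i}W+o_\eps(1)$ with $C_i=-\di/(1+\delta^2)$, i.e.\ \eqref{systemepointsvortex} up to $o_\eps(1)$. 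A Gronwall comparison with \eqref{systemepointsvortex} (whose vector field is Lipschitz on admissible configurations) upgrades this to $\sup_{[0,\tau']}\ssum|a_i^\eps-a_i|=o_\eps(1)$ on a slightly larger interval $[0,\tau']$; feeding this back into the energy law and using $\ssum\nabla_{a_i}W\cdot\dot a_i=-\delta\pi\ssum|\dot a_i|^2$ (immediate from \eqref{systemepointsvortex}) gives $\frac{d}{dt}\big(\er(\ue)-\er(\uest(a_i(t),\di))\big)=o_\eps(1)$, which together with the lower bound $\er(\ue(t))\geq\er(\uest(a_i^\eps(t),\di))-o_\eps(1)$ keeps the energy excess $o_\eps(1)$ — this is \eqref{W3}. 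Hence $\mathcal{T}$ is open, and the proof is complete.

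The main obstacle is the critical scaling $k_\eps=\delta/\leps$, in which the dissipative ($\delta\,\mathbb{I}_2$) and gyroscopic ($\di\,\mathbb{J}_2$) mechanisms act at the \emph{same} order: neither the purely parabolic techniques (where one needs only a lower bound on $\intr|\dt\ue|^2$) nor the conservative ones for Gross--Pitaevskii (where one transports $Ju_\eps$ by a momentum identity) are sufficient in isolation — they must be run together, and the product and lower-bound estimates identifying $\dt\ue$ with $-\sum_i\dot a_i^\eps\!\cdot\!\nabla\ue$ must be sharp enough that all remainders are $o_\eps(\leps)$ in $L^2$ and $o_\eps(1)$ after testing against the cutoffs, \emph{uniformly in} $t\in[0,T]$. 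Secondary difficulties are carrying everything out on the unbounded plane $\R$ with infinite total energy, where the far field is controlled only through \eqref{W2} and the renormalized energy $\er$, and ensuring that the modulus $o_\eps(1)$ in the continuation does not degrade along $[0,T]$.
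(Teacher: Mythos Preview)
Your overall architecture --- energy monotonicity, vortex localisation, Sandier--Serfaty lower bound, Gronwall against \eqref{systemepointsvortex}, feedback into the excess energy --- matches the paper's, and several pieces (the identity $\dot a_i\cdot\nabla_{a_i}W=-\delta\pi|\dot a_i|^2$, Lipschitz continuity of $W$, the far-field control through $\er$) appear in both. The continuation framing versus the paper's subsequence-then-Gronwall on $h(t)=\sum_i\int_0^t|\dot a_i-\dot b_i|$ is a stylistic difference only. But the core step of your ``momentum law'' does not go through as written.

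You test \CGL against $\chi_i\nabla\ue$ and assert that the $k_\eps\,\dt\ue$ contribution yields $-\delta\pi\,\dot a_i^\eps$, ``made explicit through the product estimate $\dt\ue\approx-\dot a_i^\eps\cdot\nabla\ue$''. Concretely this requires
\[
k_\eps\int \chi_i\,\dt\ue\cdot\partial_k\ue \;\longrightarrow\; -\delta\pi\,(\dot a_i^\eps)_k,
\]
i.e.\ a \emph{two-sided} identification of $\dt\ue$ with $-\dot a_i^\eps\cdot\nabla\ue$ at the level of $O(\leps)$ quantities. The Sandier--Serfaty product estimate is only a lower bound for $\int|\dt\ue|^2$; the ``matching upper bound for the reference maps $\uest$'' you invoke is not available a priori, since $\uest(a_i^\eps(t),d_i)$ is not a solution and there is no comparison principle for $\dt\ue$. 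In the purely parabolic case one recovers saturation of the lower bound \emph{a posteriori}, once the motion law is known, via the global energy identity --- but that is precisely what you are trying to prove, so at this point the argument is circular. The same objection applies to your ``energy law'' equality $\int|\dt\ue|^2=\pi\leps\sum|\dot a_i^\eps|^2+o_\eps(\leps)$.

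The paper sidesteps this entirely. Rather than isolate the $k_\eps\,\dt\ue\cdot\nabla\ue$ term, it couples the Jacobian evolution with the evolution of the \emph{energy density} (Proposition~\ref{evolution}):
\[
\frac{d}{dt}\intr\Big(J(\ue)\,\chi + k_\eps\, e_\eps(\ue)\,\varphi\Big)
= 2\intr\mathrm{Im}\Big(\omega(\ue)\,\frac{\partial^2\chi}{\partial\bar z^2}\Big)
- k_\eps^2\intr|\dt\ue|^2\,\varphi + R_\eps,
\]
with $R_\eps=-k_\eps\int(\nabla\varphi-\nabla^\perp\chi)\cdot(\dt\ue\cdot\nabla\ue)$, and with $\varphi,\chi$ \emph{both affine near each vortex and related by} $\nabla\varphi=\nabla^\perp\chi$ there. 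Two things happen. First, the dissipative contribution is now carried by the concentration $\mu_\eps(\ue)\rightharpoonup\pi\sum\delta_{b_i}$ (Theorem~\ref{coercivitytheorem}), not by any product estimate: the left side converges to $\pi\sum_i\big(d_i\nabla\chi+\delta\nabla^\perp\chi\big)(a_i^0)\cdot\dot b_i$ directly. Second, $\nabla\varphi-\nabla^\perp\chi$ is supported in an annulus \emph{away} from the vortices, where $\int|\nabla\ue|^2\leq C$; Cauchy--Schwarz and the crude bound $\int_0^T\int|\dt\ue|^2\leq C\leps$ then give $R_\eps=O(\leps^{-1/2})$, while $k_\eps^2\int|\dt\ue|^2\varphi=O(\leps^{-1})$ (Lemma~\ref{lemma : dissipation}). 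The Sandier--Serfaty inequality is used only once, as a one-sided lower bound in Lemma~\ref{dissipationexcesenergie}, to show $\limsup\Sigma_{\eps_k}(t)\leq Ch(t)$; this in turn controls $\omega(\ue)-\omega(u^*(b_i,d_i))$ in $L^1$ on the support of $\partial^2\chi/\partial\bar z^2$. The fix, in short, is not to sharpen the product estimate but to replace the inaccessible quantity $k_\eps\int\chi_i\,\dt\ue\cdot\nabla\ue$ by the time derivative of $\delta\langle\mu_\eps,\varphi\rangle$ with $\varphi$ tied to $\chi$ --- trading a hard analytic estimate for a concentration statement that is already in hand.
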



\numberwithin{equation}{section}

\section{Evolution formula for $\ue$}

In this section, we recall or derive a number of evolution formulae
involving quantities related to $u_\eps$ which we introduce now.

\subsection{Notations} Throughout this article, we identify $\mathbb{R}^2$ and $\mathbb{C}$.
Given $x=(x_1,x_2)\in \mathbb{R}^2$, we set $x^{\bot}=(-x_2,x_1)$,
which in complex notations reads $x^{\bot}=ix$. For $z$ and $z'\in
\mathbb{C}$, $z\cdot z'=\mathrm{Re}(z\overline{z'})$ denotes the
scalar product and $z\times z'=z^{\bot}\cdot z'=-\mathrm{Im} (z
\overline{z'})$ the exterior product of $z$ and $z'$ in
$\mathbb{R}^2$. For a map $u:\R \rightarrow \mathbb{C}$, we denote
by
\begin{eqnarray*}
j(u)=u\times \nabla u=iu\cdot \nabla u=u^{\bot} \cdot \nabla u
\end{eqnarray*}
the linear momentum and
\begin{eqnarray*}
J(u)=\partial_1 u\times \partial_2u =\textrm{det}(\nabla u)
\end{eqnarray*}
the Jacobian of $u$. For $u\in H_{\loc}^1(\R)$, it can be checked
that $J(u)=\frac{1}{2}\textrm{curl} j(u) $ in the distribution
sense. On the set where $u$ does not vanish, we have for $k=1,2$
\begin{eqnarray*}
\partial_k u = \partial_k u\cdot \frac{u}{|u|}
\frac{u}{|u|}+\partial_k u \cdot \frac{iu}{|u|} \frac{iu}{|u|}.
\end{eqnarray*}
This yields
\begin{eqnarray}
\label{modules2} \partial_k u = \partial_k |u| \frac{u}{|u|} +
\frac{j_k(u)}{|u|} \frac{u^{\bot}}{|u|},
\end{eqnarray}
 hence we have
\begin{eqnarray}
\label{modules3}
\partial_k u\cdot \partial_l u =\partial_k |u|\partial_l
|u|+\frac{j_k(u)j_l(u)}{|u|^2}
\end{eqnarray}
and it follows that
\begin{eqnarray}
\label{modules} |\nabla u|^2 =|\nabla |u| |^2
+\frac{|j(u)|^2}{|u|^2}.
\end{eqnarray}
The Hopf differential of $u$ is defined as
\begin{eqnarray*}
\omega(u)=|\partial_1 u|^2-|\partial_2 u|^2-2i \dun u\cdot \ddeux
u=4
\partial_z u \overline{\partial_{\overline{z}}u}.
\end{eqnarray*}
It follows from \eqref{modules3} that $\omega(u)$ may be rewritten
in terms of the components of $\nabla |u|$ and $j(u)$ as
\begin{equation}
\begin{split}
\label{omega} \omega(u)=\partial_1|u|^2&-\partial_2 |u|^2-2i\,
\partial_1 |u| \partial_2 |u|\\&+\frac{1}{|u|^2}\big(j_1^2(u)-j_2^2(u)-2i \,j_1(u) j_2(u)\big).
\end{split}
\end{equation}
We recall that the Ginzburg-Landau energy density is defined by
\begin{eqnarray*}
e_\eps(u)=\frac{|\nabla
u|^2}{2}+\frac{(1-|u|^2)^2}{4\eps^2}=\frac{|\nabla u|^2}{2}+V(u),
\end{eqnarray*}
and we set
\begin{eqnarray*}
\mu_\eps(u)=\frac{e_\eps(u)}{|\log \eps|}.
\end{eqnarray*}
 In view of \eqref{modules}, we then have
\begin{equation}
\label{energiesmodules} e_\eps(u)=e_\eps(|u|)+
\frac{|j(u)|^2}{|u|^2}.
\end{equation}
Finally, we write the right-hand side in \CGL  as
\begin{eqnarray*}
\nabla E(u)=\nabla E_\eps(u)=\Delta u +\frac{1}{\eps^2} u(1-|u|^2).
\end{eqnarray*}


\subsection{Evolution formulae involving the Jacobian and the energy
density}

For a smooth map $u$ in space-time, direct computations by
integration by part yield for the energy
\begin{equation}
\begin{split}
\label{evolutionenergie} \frac{d}{dt} \intr
e_\eps(u)\varphi\,dx=-\intr \dt u \cdot &\nabla E(u)\varphi
\,dx\\
&-\intr \nabla \varphi \cdot (\dt u\cdot \nabla u)\,dx
\end{split}
\end{equation}
and for the Jacobian
\begin{equation}
\label{evolutionjacobien} \frac{d}{dt} \intr J(u) \chi\,dx=-\intr
\nabla^{\bot} \chi \cdot (\dt u^{\bot}\cdot \nabla u)\,dx,
\end{equation}
where $\chi,\varphi\in \mathcal{D}(\R)$.

Also, for any vector field $\vec{X}\in C^1(\R,\mathbb{C})$ we have
(see e.g. \cite{BOS1})
\begin{eqnarray*}
\intr \vec{X}\cdot(\nabla E(u) \cdot \nabla u)\,
dx=2\intr\mathrm{Re}\Big( \omega(u)\frac{\partial
\vec{X}}{\partial{\overline{z}}}\Big)\,dz-\intr V(u)\nabla\cdot
\vec{X}\,dx.
\end{eqnarray*}
In particular, the choice of $\vec{X}=\nabla \varphi$ or
$\vec{X}=\nabla^{\bot}\chi=i\nabla \chi$ leads to
\begin{eqnarray*}
\intr \nabla \varphi \cdot(\nabla E(u) \cdot \nabla u)\,
dx=2\intr\mathrm{Re}\Big(\omega(u)\frac{\partial^2
\varphi}{\partial{\overline{z}}^2}\Big)\,dz-\intr V(u)\Delta
\varphi\,dx
\end{eqnarray*}
and
\begin{equation}
\label{pohozaev} \intr \nabla^{\bot} \chi \cdot(\nabla E(u) \cdot
\nabla u)\, dx=-2\intr\mathrm{Im}\Big( \omega(u)\frac{\partial^2
\chi}{\partial{\overline{z}}^2}\Big)\,dz.
\end{equation}

We next consider a solution $u$ of \CGL, which is smooth in view of
Theorem \ref{cauchyproblem}. In this case, $\nabla E(u)$ and $\dt u$
are related by
\begin{eqnarray}
\label{eve} \dt u=\frac{1}{\alpha_\eps} \nabla E(u)=\beta_\eps
\nabla E(u),
\end{eqnarray}
where $\dsp \alpha_\eps=\keps+i=k_\eps+i$. Using \eqref{eve} in
\eqref{evolutionenergie} and \eqref{evolutionjacobien}, we obtain
\begin{eqnarray*}
\frac{d}{dt} \intr e_\eps(u)\varphi\,dx=-\frac{\delta}{|\log \eps|}
\intr |\dt u|^2 \varphi \,dx -\intr \nabla \varphi \cdot (\beta_\eps
\nabla E(u)\cdot \nabla u)\,dx
\end{eqnarray*}
and
\begin{eqnarray*}
\frac{d}{dt} \intr J(u) \chi\,dx=-\intr \nabla^{\bot} \chi \cdot (i
\beta_\eps \nabla E(u)\cdot \nabla u)\,dx.
\end{eqnarray*}
In order to get rid of the terms of the form $\dsp \intr
\vec{X}\cdot (i\nabla E(u)\cdot \nabla u)$, we compute
\begin{eqnarray*}
\frac{d}{dt} \intr (b J(u) \chi-a e_\eps(u)\varphi)
\end{eqnarray*}
where $\beta_\eps=a+ib$. This yields
\begin{equation}
\begin{split}
\label{evolenergieetjacobien} \frac{d}{dt} \intr b J(u) \chi- a
e_\eps(u)&\varphi=(b^2+a^2) \intr\nabla^{\bot} \chi \cdot (\nabla
E\cdot \nabla
u)+a k_\eps \intr |\dt u|^2 \,dx \nonumber\\
&+\intr(\nabla \varphi-\nabla^{\bot} \chi) \cdot (a(a+ib)\nabla
E\cdot \nabla u).
\end{split}
\end{equation}
Since $\dsp a=\frac{k_\eps}{k_\eps^2+1}$ and $\dsp
b=\frac{-1}{k_\eps^2+1}$, we can multiply
\eqref{evolenergieetjacobien} by $k_\eps^2+1$. Using finally
\eqref{pohozaev}, we obtain
\begin{prop}
\label{evolution} Let $u$ solve \CGL. Then for all $\varphi, \chi\in
\mathcal{D}(\R)$,
\begin{equation}
\begin{split}
\label{formuleevolution} \frac{d}{dt} \intr J(u) \chi+ k_\eps
e_\eps(u)\varphi&=-k_\eps^2 \intr |\dt u|^2 \varphi+2\intr
\mathrm{Im} \Big(\omega(u) \frac{\partial^2 \chi}{\partial
\overline{z}^2}\Big) \nonumber\\
&+R_\eps(t,\varphi,\chi,u),
\end{split}
\end{equation}
where the remainder $R_\eps$ is defined by
\begin{eqnarray*}
R_\eps(t,\varphi,\chi,u)=-k_\eps \intr (\nabla \varphi-\nabla^{\bot}
\chi)\cdot(\beta_\eps\nabla E(u)\cdot \nabla u)
\end{eqnarray*}
or equivalently
\begin{eqnarray*}
 R_\eps(t,\varphi,\chi,u) =-k_\eps \intr (\nabla
\varphi-\nabla^{\bot} \chi)\cdot(\dt u\cdot \nabla u).
\end{eqnarray*}
\end{prop}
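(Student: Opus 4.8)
\noindent\emph{Proof strategy.} By Theorem~\ref{cauchyproblem} any solution $u$ of \CGL is smooth on $\mathbb{R}_+^*\times\R$, and since $\varphi,\chi$ have compact support the generic identities \eqref{evolutionenergie} and \eqref{evolutionjacobien} apply verbatim to $u$; the proof is then a finite algebraic manipulation, with no analytic difficulty to overcome. First I would insert the flow relation \eqref{eve}, that is $\dt u=\beta_\eps\nabla E(u)$ with $\beta_\eps=\alpha_\eps^{-1}=a+ib$, into \eqref{evolutionenergie}. Using $\nabla E(u)=\alpha_\eps\,\dt u$ and $\mathrm{Re}\,\alpha_\eps=k_\eps$, its first term collapses, $\dt u\cdot\nabla E(u)=\mathrm{Re}(\alpha_\eps)\,|\dt u|^2=k_\eps|\dt u|^2$, so that
\[
\frac{d}{dt}\intr e_\eps(u)\varphi=-k_\eps\intr|\dt u|^2\varphi-\intr\nabla\varphi\cdot(\dt u\cdot\nabla u),
\]
while \eqref{evolutionjacobien} reads $\frac{d}{dt}\intr J(u)\chi=-\intr\nabla^{\bot}\chi\cdot(\dt u^{\bot}\cdot\nabla u)$ with $\dt u^{\bot}=i\,\dt u=i\beta_\eps\nabla E(u)$.

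The crucial step is to form the combination $\frac{d}{dt}\intr\big(bJ(u)\chi-a\,e_\eps(u)\varphi\big)$ and to decompose each momentum field $\big(c\,\nabla E(u)\big)\cdot\nabla u$ occurring in it (with $c=\beta_\eps$ and with $c=i\beta_\eps$) into $\mathrm{Re}(c)\,\nabla E(u)\cdot\nabla u+\mathrm{Im}(c)\,\nabla E(u)\times\nabla u$. The coefficients $b$ and $-a$ are tailored so that, once all terms are collected, everything except the single contribution $(a^2+b^2)\intr\nabla^{\bot}\chi\cdot(\nabla E(u)\cdot\nabla u)$ and the dissipation $ak_\eps\intr|\dt u|^2\varphi$ becomes a sum of terms of the shape $\nabla\varphi-\nabla^{\bot}\chi$ contracted against suitable vector fields; this is precisely identity \eqref{evolenergieetjacobien}. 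I expect this to be the main step: it is not difficult, but it does require careful sign bookkeeping in the parallel/transverse decomposition so that all the unwanted cross terms genuinely land against $\nabla\varphi-\nabla^{\bot}\chi$, which is exactly what the choice $b,\,-a$ achieves.

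Finally I would multiply \eqref{evolenergieetjacobien} through by $k_\eps^2+1$. With $a=k_\eps/(k_\eps^2+1)$ and $b=-1/(k_\eps^2+1)$ one has $a(k_\eps^2+1)=k_\eps$, $-b(k_\eps^2+1)=1$, $(a^2+b^2)(k_\eps^2+1)=|\beta_\eps|^2(k_\eps^2+1)=1$ and $ak_\eps(k_\eps^2+1)=k_\eps^2$. Hence the left-hand side becomes $-\frac{d}{dt}\intr\big(J(u)\chi+k_\eps e_\eps(u)\varphi\big)$, the $(a^2+b^2)$-term becomes $\intr\nabla^{\bot}\chi\cdot(\nabla E(u)\cdot\nabla u)$, the dissipative term becomes $k_\eps^2\intr|\dt u|^2\varphi$, and the $(\nabla\varphi-\nabla^{\bot}\chi)$-terms reassemble into $-R_\eps$. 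Rewriting $\intr\nabla^{\bot}\chi\cdot(\nabla E(u)\cdot\nabla u)=-2\intr\mathrm{Im}\big(\omega(u)\,\frac{\partial^2\chi}{\partial\overline{z}^2}\big)$ via the Pohozaev-type identity \eqref{pohozaev} and multiplying the resulting identity by $-1$ yields \eqref{formuleevolution}. The two displayed expressions for $R_\eps$ coincide simply because $\dt u=\beta_\eps\nabla E(u)$.
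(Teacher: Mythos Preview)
Your proposal is correct and follows essentially the same route as the paper: substitute \eqref{eve} into the generic evolution identities \eqref{evolutionenergie}--\eqref{evolutionjacobien}, form the specific linear combination $bJ(u)\chi - a\,e_\eps(u)\varphi$ so as to cancel the cross terms and obtain \eqref{evolenergieetjacobien}, then multiply by $k_\eps^2+1$ and invoke \eqref{pohozaev}. The algebraic bookkeeping you describe (the role of the choice $b,-a$, the values of $a(k_\eps^2+1)$, $(a^2+b^2)(k_\eps^2+1)$, etc.) matches the paper's computation exactly.
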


Proposition \ref{evolution} allows to derive formally the motion law
for the vortices. Indeed, assume that we have
\begin{eqnarray*}
Ju_\eps (t)\to \pi \ssum d_i \delta_{\ai(t)},
\end{eqnarray*}
\begin{eqnarray*}
\mu_\eps(u_\eps)(t)\to \pi \ssum \delta_{a_i(t)}
\end{eqnarray*}
and $u_\eps(t)$ is close in some sense to $\dsp \uest(a_i(t),d_i)$,
and therefore to $\dsp u^*(a_i(t),d_i)$, where
\begin{eqnarray*}
u^*(a_i,d_i)=\prod_{i=1}^l \left(\frac{z-a_i}{|z-a_i|}\right)^{d_i}.
\end{eqnarray*}
We use Proposition \ref{evolution} with $u$ formally replaced by
$\dsp u^*(a_i(t),d_i)$ and with choices of test functions $\varphi$
and $\chi$ which are localized and affine near each point $a_i(t)$
and satisfy $\nabla \varphi=\nabla^{\bot} \chi$ there, so that both
terms $k_\eps^2 \intr |\dt u|^2 \varphi$ and
$R_\eps(t,\varphi,\chi,u_\eps)$ vanish in the limit $\eps \to 0$.
Using the formula (see \cite{BJS})
\begin{equation*}
2 \intr\mathrm{Im} \left(\omega\big(u^*(a_i(t),d_i)\big)
\frac{\partial^2 \chi}{\partial \overline{z}^2}\right)=-\pi
\sum_{j\neq i} d_i d_j \frac{(a_i-a_j)^{\bot}}{|a_i-a_j|^2}\cdot
\nabla \chi (a_i),
\end{equation*}
we then obtain that for each $i$
\begin{eqnarray*}
\pi d_i \dot{a_i}(t)\cdot \nabla \chi(a_i)+\delta \pi \dot{a_i}(t)
\cdot \nabla \varphi(a_i)=-\pi \sum_{j\neq i} d_i d_j
\frac{(a_i-a_j)^{\bot}}{|a_i-a_j|^2}\cdot \nabla \chi (a_i).
\end{eqnarray*}
Taking into account the fact that $\nabla
\varphi(a_i)=\nabla^{\bot}\chi(a_i)$, we infer that
\begin{eqnarray*}
\pi \big(d_i\dot{a_i}(t)-\delta \dot{a_i}^{\bot}(t)\big)\cdot \nabla
\chi(a_i)=-\pi \sum_{j\neq i} d_i d_j
\frac{(a_i-a_j)^{\bot}}{|a_i-a_j|^2}\cdot \nabla \chi (a_i),
\end{eqnarray*}
which yields the ODE \eqref{systemepointsvortex}.

\medskip

In Section 4 and 5, in order to give a rigorous meaning to the
previous computations, we will prove the convergence of the
Jacobians and of the energy densities to the weighted sum of dirac
masses mentioned above, and then show that both the energy
dissipation $k_\eps^2 \intr |\dt u|^2 \varphi $ and the remainder
$R_\eps(t,\varphi,\chi,u_\eps) $
 vanish asymptotically when $\eps$ tends to zero. Finally, we will establish a control
 of $\omega(u^*(a_i),d_i)-\omega(u_\eps)$ or equivalently of $
 \omega(\uest(a_i),d_i)-\omega(u_\eps)$ in
 $L_{\loc}^1(\R/\{a_i(t)\})$.


\section{Some results on the renormalized energy}

In this section, we study the link between the energy $\er$ and the
usual Ginzburg-Landau energy on large balls. This may be achieved
for maps having uniform bounded energy on large annuli by defining a
degree at infinity.

\subsection{Energy at infinity and topological degree at infinity}

Let $A$ be the annulus $B(2)/B(1)$. We define
\begin{eqnarray*}
T_d=\{u\in H^1(A)\: \textrm{s.t.} \: \exists B\subset B(u),\:|B|\geq
\frac{3}{4},\: \forall r\in B,\: \textrm{deg}(u,\partial B(r))=d\}
\end{eqnarray*}
and
\begin{eqnarray*}
E_\eps^\Lambda=\{u\in H^1(A) \: \textrm{s.t.} \:
E_\eps(u,A)<\Lambda\}.
\end{eqnarray*}
The topological sector of degree $d$ is then defined as
\begin{eqnarray*}
S_{d,\eps}^\Lambda=E_\eps^\Lambda\cap T_d.
\end{eqnarray*}
The following Theorem was proved in \cite{A}.

\begin{thm}
\label{Almeida} For all $\Lambda>0$, there exists $\eps_\Lambda>0$
such that for every $0<\eps<\eps_\Lambda$, we have
\begin{equation*}
E_\eps^\Lambda=\bigcup_{d\in \mathbb{Z}}S_{d,\eps}^\Lambda.
\end{equation*}
\end{thm}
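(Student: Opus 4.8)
The inclusion $\bigcup_{d\in\mathbb Z}S_{d,\eps}^\Lambda\subseteq E_\eps^\Lambda$ is immediate from the definitions, so the content is: once $\eps$ is small in terms of $\Lambda$, every $u$ with $E_\eps(u,A)<\Lambda$ belongs to $T_d$ for some $d\in\mathbb Z$. The plan is to use the standard Ginzburg--Landau vortex--ball lower bounds to show that such a $u$ has a well-defined, $\rho$-independent ``degree across the annulus'' on almost every circle $\partial B(\rho)$, $\rho\in(1,2)$. Concretely, I would first apply the vortex--ball construction of Sandier/Jerrard (see also Sandier--Serfaty) to $u$ on $A$: there is $\eps_0=\eps_0(\Lambda)$ such that for $\eps<\eps_0$ one obtains finitely many disjoint closed balls $\overline B(p_j,r_j)\subseteq\overline A$, $j=1,\dots,N$, with $\{x\in A:\ |u(x)|\le 1/2\}\subseteq\bigcup_j B(p_j,r_j)$, $\sum_j r_j\le\eps^{1/4}$, and, after the usual ball-growth step, $r_j\ge\eps^{3/4}$ whenever $d_j\neq 0$, where $d_j:=\deg\!\big(u,\partial B(p_j,r_j)\big)$ is well defined since $|u|>1/2$ on each $\partial B(p_j,r_j)$. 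The lower bound then gives $\int_{B(p_j,r_j)}e_\eps(u)\ge \pi|d_j|\big(\log\tfrac{r_j}{\eps}-C\big)\ge c_1|d_j|\,|\log\eps|$ for every $j$ with $d_j\neq0$ and every $\eps$ small, for some universal $c_1>0$, the balls being disjoint.

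\emph{Step 2 (no interior winding).} Put $J_{\mathrm{int}}=\{\,j:\ d_j\neq0,\ \operatorname{dist}(p_j,\partial A)\ge\eps^{1/4}\,\}$. Summing the lower bound over $J_{\mathrm{int}}$ and using $E_\eps(u,A)<\Lambda$ yields $\sum_{j\in J_{\mathrm{int}}}|d_j|\le\Lambda/(c_1|\log\eps|)$, which is $<1$ as soon as $\eps<\eps_\Lambda$ for a suitable $\eps_\Lambda=\eps_\Lambda(\Lambda)$; hence $J_{\mathrm{int}}=\emptyset$, i.e.\ every ball carrying nonzero degree lies within distance $\eps^{1/4}$ of $\partial A=\partial B(1)\cup\partial B(2)$.

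\emph{Step 3 (constant slicewise degree and conclusion).} For $\rho\in(1,2)$ such that $\partial B(\rho)$ is disjoint from $\bigcup_j\overline B(p_j,r_j)$ — a condition excluding only a set of radii of measure $\le 2\sum_j r_j\le 2\eps^{1/4}$ — the loop $u|_{\partial B(\rho)}$ takes values in $\mathbb C^\ast$, so its winding number $d(\rho)$ is defined, and by homotopy in the annulus $d(\rho')-d(\rho)=\sum_{j:\ \overline B(p_j,r_j)\subseteq B(\rho')\setminus B(\rho)}d_j$ for any two such $\rho<\rho'$. Thus $d(\cdot)$ is a step function whose jumps occur only at radii $|p_j|$ with $d_j\neq0$, all of which lie within $\eps^{1/4}$ of $1$ or $2$ by Step 2; consequently $d(\rho)$ equals one and the same integer $d$ for every admissible $\rho\in(1+2\eps^{1/4},\,2-2\eps^{1/4})$. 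The set of such $\rho$ has measure at least $1-6\eps^{1/4}\ge 3/4$ for $\eps$ small, is contained in $B(u)$, and satisfies $\deg(u,\partial B(\rho))=d$ throughout; hence $u\in T_d$, so $u\in S_{d,\eps}^\Lambda$. Taking the union over $d$ and shrinking $\eps_\Lambda$ to absorb the thresholds of Steps 1--2 finishes the proof.

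\emph{Main obstacle.} Everything reduces to the quantitative input of Step 1 — a nontrivial vortex costs at least $\sim|\log\eps|$ of energy, with the ball-growth providing the correct $\eps$-dependence — together with its validity for balls abutting $\partial A$; the remaining steps are elementary bookkeeping. A more self-contained route would replace Step 1 by the elementary slice estimate ``$\min_{\partial B(\rho)}|u|\le 1/2\ \Rightarrow\ \int_{\partial B(\rho)}e_\eps(u)\gtrsim 1/\eps$'' (balance the tangential gradient against the potential on the circle by Cauchy--Schwarz, then use $\int_1^2\big(\int_{\partial B(\rho)}e_\eps(u)\big)d\rho<\Lambda$), which already forces the bad radii to have measure $O(\Lambda\eps)$; the difficulty is then pushed into showing that a genuine change of the slicewise degree between two macroscopically separated good circles forces a logarithmically divergent energy, which is once again the cost-of-a-vortex estimate.
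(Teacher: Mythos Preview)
The paper does not prove this statement itself: it is quoted from Almeida \cite{A} (the sentence preceding the theorem reads ``The following Theorem was proved in \cite{A}''), so there is no in-paper argument against which to compare your proposal.

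Assessed on its own merits, your argument is essentially correct. The driving mechanism---an $\eps$-independent energy bound on the annulus forbids any interior vortex of nonzero degree once $\eps$ is small, since each such vortex carries at least $c\,|\log\eps|$ of energy---is exactly the right one. Two minor adjustments would streamline it:
\begin{itemize}
\item The per-ball assertion ``$r_j\ge\eps^{3/4}$ whenever $d_j\neq0$'' is not part of the standard Sandier/Jerrard output and is not needed. The global lower bound $\sum_j\int_{B_j}e_\eps(u)\ge\pi D\big(\log(r/\eps)-C\big)$, with $r=\sum_j r_j\le\eps^{1/4}$ and $D=\sum_j|d_j|$ taken over the balls fully contained in $A$, already forces $D=0$ as soon as $\eps<\eps_\Lambda(\Lambda)$. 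This replaces your Step~2 without the extra radius hypothesis.
\item Balls that meet $\partial A$ carry no well-defined degree and need none; since each $r_j\le\sum_j r_j\le\eps^{1/4}$, such balls automatically lie within $\eps^{1/4}$ of the boundary, and your Step~3 bookkeeping (restricting to $\rho\in(1+2\eps^{1/4},\,2-2\eps^{1/4})$) already discards the corresponding radii.
\end{itemize}
With these tweaks the proof is complete. The slice-based alternative you outline at the end is also valid and arguably more economical here: the fixed bound $\Lambda$ (rather than $C|\log\eps|$) means the full vortex-ball machinery is stronger than necessary, and the elementary circle estimate plus the ``two good slices with different degree $\Rightarrow$ logarithmic cost in between'' step suffice.
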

In the sequel of this section, we fix $\Lambda>\Lambda_d= \pi d^2
\log(2)$ and we set
\begin{equation*}
S_d(\Lambda)\equiv S_{d,\eps_\Lambda}^\Lambda,
\end{equation*}
so that in particular the map $U_d$ belongs to $S_d(\Lambda)$.

Let $u\in [U_d]+H^1(\R)$ and for $k\in \mathbb{N}$, set $u_k : z\in
A \mapsto u(2^k z)$. By scaling, we find that for every
$0<\eps<\eps_{\Lambda}$, the map $u_k$ belongs to
$E_{\eps_\Lambda}^{\Lambda}$ for $k\geq k(\eps)$ sufficiently large
and therefore to some topological sector $S_{d(k),\eps}^\Lambda$.
Thanks to the uniform bound for the energy $E_{\eps}(u_k,A)$ for
large $k$, this degree is necessarily identically equal to $d$.

\begin{prop}[\cite{BJS}, Corollary 3.1]
Let $d\in \mathbb{Z}$ and $\Lambda>\Lambda_d$. For any $u\in
[U_d]+H^1(\R)$, there exists an integer $n\in \mathbb{N}^*$ such
that for all $k\geq n$, the map $u_k : z\in A \mapsto u(2^k z)$
belongs to the topological sector $S_d$. We denote by $n(u,\Lambda)$
the smallest integer having this property. The map $u\mapsto
n(u)=n(u,\Lambda)$ is continuous.
\end{prop}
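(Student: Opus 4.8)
Since the existence of $n(u,\Lambda)$ is already contained in the paragraph preceding the statement (scaling, Theorem~\ref{Almeida}, and the fact that the sector of $u_k$ stabilises at $d$ for large $k$), the plan is to prove the continuity assertion; because $\mathbb{N}^*$ carries the discrete topology this amounts to showing that $u\mapsto n(u,\Lambda)$ is locally constant. I fix $u=U+w\in[U_d]+H^1(\R)$, with $U\in[U_d]$ and $w\in H^1(\R)$, and a sequence $u_m\to u$ (equivalently $u_m-u\to0$ in $H^1(\R)$), and I aim to prove $n(u_m,\Lambda)=N:=n(u,\Lambda)$ for $m$ large. The one elementary tool used throughout is the rescaling identity obtained from the change of variables $y=2^kz$: for maps $v,v'$ differing by an $H^1(\R)$ function,
\begin{equation*}
\|v_k-v'_k\|_{H^1(A)}^2=2^{-2k}\int_{A_k}|v-v'|^2+\int_{A_k}|\nabla(v-v')|^2\le\|v-v'\|_{H^1(\R)}^2.
\end{equation*}
Thus $u_{m,k}\to u_k$ in $H^1(A)$ \emph{uniformly in the scale} $k$; moreover the same computation, together with $U\in\mathcal V\subset L^\infty$ and $|\nabla U|^2-|\nabla U_d|^2\in L^1$, bounds $\sup_k\|u_k\|_{H^1(A)}$, and a Fubini slicing of the same identity gives $\int_1^2\|u_{m,k}-u_k\|_{H^1(\partial B(r))}^2\,dr\le\|u_m-u\|_{H^1(\R)}^2$, again uniformly in $k$.

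For the bound $n(u_m,\Lambda)\le N$ I must produce $u_{m,k}\in S_d(\Lambda)$ for \emph{every} $k\ge N$ once $m$ is large. Concerning the energy sublevel condition, I would first check that $E_{\eps_\Lambda}(u_k,A)\to\Lambda_d<\Lambda$ as $k\to\infty$: the Dirichlet part converges to $\Lambda_d$ by the two-dimensional scale invariance of $\int|\nabla\cdot|^2$ combined with $|\nabla U|^2-|\nabla U_d|^2\in L^1$ and $w\in H^1$, while the potential part vanishes because the rescaling turns $\eps_\Lambda$ into $2^k\eps_\Lambda\to\infty$. Since $E_{\eps_\Lambda}(u_k,A)<\Lambda$ for all $k\ge N$ by definition of $N$, it follows that $\Lambda':=\sup_{k\ge N}E_{\eps_\Lambda}(u_k,A)<\Lambda$. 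Because $v\mapsto E_{\eps_\Lambda}(v,A)$ is locally Lipschitz on $H^1(A)$ and $u_{m,k}\to u_k$ in $H^1(A)$ uniformly in $k$ with uniformly bounded norms, we get $E_{\eps_\Lambda}(u_{m,k},A)<\Lambda$ for all $k\ge N$ once $m$ is large, so $u_{m,k}\in E_{\eps_\Lambda}^\Lambda$, and Theorem~\ref{Almeida} places it in a sector $S_{d'(m,k)}(\Lambda)$. It then remains to identify $d'(m,k)=d$.

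The degree identification is where I expect the real work. At a \emph{fixed} scale $k$ it is routine: the membership $u_k\in T_d$ provides a set $B_k\subset(1,2)$ of radii, $|B_k|\ge 3/4$, on whose circles $u_k$ is nonvanishing with winding number $d$; slicing the $H^1(A)$-convergence gives, along a subsequence, uniform convergence of the traces $u_{m,k}|_{\partial B(r)}$ for a.e.\ $r$, and on the circles of $B_k$, where $|u_k|$ is bounded below, this forces $\deg(u_{m,k},\partial B(r))=d$ for $m$ large on a set of radii of measure $\ge 1/2$; since this set must meet the measure-$\ge 3/4$ degree set of $u_{m,k}$, one concludes $d'(m,k)=d$. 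The delicate point is to make the threshold on $m$ \emph{independent of} $k$. Two ingredients are already uniform in $k$: the Chebyshev control of the bad set of radii coming from the displayed slicing estimate, and the embedding constant of $H^1(\partial B(r))\hookrightarrow C^0$ for $r\in(1,2)$. The missing uniform ingredient is a lower bound $|u_k|\ge c_0>0$ valid on a set of circles of definite measure with $c_0$ independent of $k$; this is exactly an $\eta$-ellipticity (clearing-out) estimate for the Ginzburg--Landau energy, applied at each dyadic scale with the fixed parameter $\eps_\Lambda$ from Theorem~\ref{Almeida} and the uniform bound $\Lambda'$, and I view proving this scale-uniform nondegeneracy as the main obstacle of the argument.

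For the reverse bound $n(u_m,\Lambda)\ge N$ I would invoke the minimality of $N$: $u_{N-1}\notin S_d(\Lambda)$, hence either $E_{\eps_\Lambda}(u_{N-1},A)\ge\Lambda$ — in which case $E_{\eps_\Lambda}(u_{m,N-1},A)\ge\Lambda$ for $m$ large and $u_{m,N-1}\notin E_{\eps_\Lambda}^\Lambda$ — or, by Theorem~\ref{Almeida}, $u_{N-1}\in S_{d''}(\Lambda)$ with $d''\ne d$, in which case the slicing/winding-number argument at the single scale $k=N-1$ gives $d'(m,N-1)=d''\ne d$ for $m$ large. Either way $u_{m,N-1}\notin S_d(\Lambda)$, so $n(u_m,\Lambda)\ge N$, and together with the previous bound $n(u_m,\Lambda)=N$ for $m$ large; the borderline case $E_{\eps_\Lambda}(u_{N-1},A)=\Lambda$ is non-generic in $\Lambda$ and would be dealt with separately.
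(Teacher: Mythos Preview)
The paper does not prove this proposition at all; it is quoted verbatim from \cite{BJS}, Corollary 3.1, and no argument is supplied here. There is therefore no ``paper's own proof'' against which to compare your proposal.

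That said, two comments on your outline. First, the gap you yourself flag --- a scale-uniform lower bound on $|u_k|$ along a set of circles of definite measure --- is indeed the crux, and it is not obvious that a clearing-out estimate at the fixed parameter $\eps_\Lambda$ gives what you need uniformly over all dyadic scales $k\ge N$; the potential term in $E_{\eps_\Lambda}(u_k,A)$ is rescaled by $2^{-2k}$ and carries essentially no information for large $k$, so the nondegeneracy must come from somewhere else (for instance from the structure of the topological sectors in Theorem~\ref{Almeida}, which are open in $E_{\eps_\Lambda}^\Lambda$, combined with a quantitative separation between distinct sectors). Second, your treatment of the reverse inequality $n(u_m,\Lambda)\ge N$ contains a genuine gap: you dismiss the borderline case $E_{\eps_\Lambda}(u_{N-1},A)=\Lambda$ as ``non-generic in $\Lambda$,'' but $\Lambda$ is fixed in the statement and cannot be perturbed. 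In that borderline situation, $u_{N-1}\notin E_{\eps_\Lambda}^\Lambda$, yet a small perturbation could push $u_{m,N-1}$ into $E_{\eps_\Lambda}^\Lambda\cap T_d=S_d(\Lambda)$, giving $n(u_m,\Lambda)\le N-1$; you have not ruled this out. Whether the continuity assertion is even true in this exact form, or whether \cite{BJS} states something slightly weaker (e.g.\ upper semicontinuity, which is all that the present paper actually uses via Lemma~\ref{energyatinfinityanddegree}), would need to be checked against the original source.
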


We first have the following

\begin{lemme}
\label{energyatinfinityanddegree} Let $\Lambda>\Lambda_d$ be given.
Let $u\in [U_d]+H^1(\R)$ and assume that there exists $n_0\in
\mathbb{N}^*$  such that for all $n\geq n_0$,
\begin{eqnarray*}
E_{\eps_\Lambda}(u,A_n) < \Lambda.
\end{eqnarray*}
Then we have $n(u,\Lambda)\leq n_0$.
\end{lemme}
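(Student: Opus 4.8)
The plan is to translate the hypothesis ``$E_{\eps_\Lambda}(u,A_n)<\Lambda$ for all $n\geq n_0$'' into the statement that each rescaled map $u_k$ (for $k\geq n_0$) lies in the topological sector $S_d$, and then to invoke minimality of $n(u,\Lambda)$. First I would recall the scaling identity for the Ginzburg-Landau energy: writing $u_k(z)=u(2^kz)$ on the fixed annulus $A=B(2)\setminus B(1)$, one has the Dirichlet part scale-invariant and the potential part controlled, so that $E_{\eps_\Lambda}(u_k,A)=E_{2^{-k}\eps_\Lambda}(u,A_k)\leq E_{\eps_\Lambda}(u,A_k)$ once $k$ is large enough that $2^{-k}\eps_\Lambda<\eps_\Lambda$ makes the potential term only smaller (here the monotonicity $V_{\eps'}\leq V_\eps$ for $\eps'\leq\eps$ is used). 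Thus for every $k\geq n_0$ the hypothesis gives $E_{\eps_\Lambda}(u_k,A)<\Lambda$, i.e. $u_k\in E_{\eps_\Lambda}^{\Lambda}$.

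Next I would apply Theorem \ref{Almeida}: since $u_k\in E_{\eps_\Lambda}^{\Lambda}=\bigcup_{d'\in\mathbb{Z}}S_{d',\eps_\Lambda}^{\Lambda}$, each $u_k$ ($k\geq n_0$) belongs to some topological sector $S_{d(k),\eps_\Lambda}^{\Lambda}=S_{d(k)}(\Lambda)$. It remains to identify $d(k)=d$ for all such $k$. For this I would use that $u\in[U_d]+H^1(\R)$: by the discussion preceding the proposition, for $k$ sufficiently large $u_k$ lies in $S_d$, and since the degree $d(k)$ is locally constant (it is the degree read off on a set of radii of positive measure, which cannot jump under the continuous-in-$k$ family $u_k$ as long as all $u_k$ stay in $E_{\eps_\Lambda}^{\Lambda}$, the sectors being disjoint), we get $d(k)=d$ for \emph{every} $k\geq n_0$, not merely for large $k$. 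In particular $u_{n_0}\in S_d$.

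Finally, since $n(u,\Lambda)$ is by definition the smallest integer $n$ such that $u_k\in S_d$ for all $k\geq n$, and we have just shown this holds with $n=n_0$, we conclude $n(u,\Lambda)\leq n_0$. The one point requiring a little care — and the place I expect the main (mild) obstacle — is the constancy of $d(k)$ down to $k=n_0$: one must argue that the map $k\mapsto d(k)$ cannot change value along the chain $u_{n_0},u_{n_0+1},\dots$ because the sectors $S_{d'}(\Lambda)$ are open and disjoint in $H^1(A)$ and the path $s\mapsto u(2^{s}z)$, $s\in[n_0,+\infty)$, is continuous and stays inside $E_{\eps_\Lambda}^{\Lambda}$; equivalently, invoke the continuity of $u\mapsto n(u)$ from the preceding proposition together with the fact that the degree at infinity of $u$ equals $d$. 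Everything else is a routine scaling computation.
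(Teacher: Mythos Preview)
The paper states this lemma without proof, so there is no argument to compare against; your overall strategy --- rescale to the fixed annulus, invoke Theorem~\ref{Almeida}, and then propagate the degree down from large $k$ to $k=n_0$ by a connectedness argument --- is the natural one and is essentially correct.  Two points need to be fixed, however.

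First, your scaling identity is written with the wrong sign: for $u_k(z)=u(2^kz)$ one has
\[
E_{\eps}(u_k,A)=E_{2^{k}\eps}(u,A_k),
\]
not $E_{2^{-k}\eps}(u,A_k)$; and the monotonicity of the potential goes the other way, $V_{\eps'}\geq V_\eps$ when $\eps'\leq\eps$.  Your two errors cancel, so the desired inequality $E_{\eps_\Lambda}(u_k,A)\leq E_{\eps_\Lambda}(u,A_k)<\Lambda$ is true, but for the correct reason: $2^k\eps_\Lambda\geq\eps_\Lambda$, hence the rescaled potential is \emph{smaller}.

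Second, and more substantively, the continuous path $s\mapsto u_s(z)=u(2^sz)$ does \emph{not} obviously stay in $E_{\eps_\Lambda}^{\Lambda}$ for non-integer $s$: for $s\in(k,k+1)$ the annulus $2^sA$ overlaps both $A_k$ and $A_{k+1}$, and the hypothesis only yields $E_{\eps_\Lambda}(u_s,A)<2\Lambda$.  So the connectedness argument, as you wrote it, does not run inside the set where Almeida's decomposition is asserted.  A clean repair is to link consecutive integers directly: for each $k\geq n_0$ consider $v(z)=u(2^kz)$ on the doubled annulus $\tilde A=B(4)\setminus B(1)$, where $E_{\eps_\Lambda}(v,\tilde A)<2\Lambda$.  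Almeida's result applied on $\tilde A$ (with threshold $\pi d^2\log 4=2\Lambda_d<2\Lambda$, after possibly shrinking $\eps_\Lambda$ so that it also lies below the $\tilde A$--threshold) assigns a single degree to $v$, and that degree is read both on $B(2)\setminus B(1)$ (giving $d(k)$) and on $B(4)\setminus B(2)$ (giving $d(k+1)$).  Hence $d(k)=d(k+1)$ for every $k\geq n_0$, and since $d(k)=d$ for $k$ large you obtain $u_k\in S_d(\Lambda)$ for all $k\geq n_0$, i.e.\ $n(u,\Lambda)\leq n_0$.
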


The definition of $n(u)$ allows to obtain a lower bound for $\er$ on
annuli.

\begin{lemme} [\cite{BJS}, Lemma 3.1]
\label{almostminimizing} Let $d\in \mathbb{Z}$ and $u\in
[U_d]+H^1(\R)$. Then, for any $k\geq n(u)$, we have for
$\eps<\eps_{\Lambda}$
\begin{equation*}
\int_{A_k} [e_\eps(u)-\frac{|\nabla U_d|^2}{2} ] \geq -C2^{-2k}
\eps^2.
\end{equation*}
\end{lemme}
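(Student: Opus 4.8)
The plan is to exploit the fact that on the annulus $A_k$, for $k\geq n(u)$, the rescaled map $u_k(z)=u(2^kz)$ lies in the topological sector $S_d$, i.e. it has degree $d$ on almost every circle $\partial B(r)$, $r\in(1,2)$. The strategy is therefore to compare $\int_{A_k}e_\eps(u)$ with a suitable lower bound coming from a ``minimal'' map of the same degree, and to identify that lower bound with $\int_{A_k}\frac{|\nabla U_d|^2}{2}$ up to an exponentially small error. Concretely, I would first rescale: writing $z=2^k\zeta$ with $\zeta\in A=B(2)\setminus B(1)$, one has $\int_{A_k}e_\eps(u)\,dz=\int_A e_{\eps 2^{-k}}(u_k)\,d\zeta$ (the gradient term is scale invariant while the potential picks up the factor $2^{2k}$, which is exactly what turns $\eps$ into $\eps/2^k=:\eps_k$), and similarly $\int_{A_k}\frac{|\nabla U_d|^2}{2}\,dz=\int_A\frac{|\nabla U_d|^2}{2}\,d\zeta$ since $\nabla U_d(z)\sim d/|z|$ is homogeneous of degree $-1$ on the relevant region and $\int_A d^2/|\zeta|^2 = 2\pi d^2\log 2 = 2\Lambda_d$; more carefully $U_d=(z/|z|)^d$ on all of $A_k$ once $2^{n_0}>1$, so this is exact.

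Next I would invoke the basic lower bound for the Ginzburg-Landau energy of a map of degree $d$ on an annulus: for $v\in S_d$ one has $\int_A e_{\eps_k}(v)\geq \pi d^2\log 2 - C\eps_k^2 = \Lambda_d - C\eps_k^2$. This is the standard annulus lower bound (the $\log$ term from the degree, the $\eps^2$ correction being the price of the potential, obtainable either from the explicit radial competitor or from the Jerrard–Sandier-type lower bounds already used throughout this circle of ideas). Since by hypothesis $k\geq n(u)$ guarantees $u_k\in S_d$, applying this to $v=u_k$ with $\eps_k=\eps 2^{-k}$ gives
\begin{equation*}
\int_A e_{\eps_k}(u_k)\geq \Lambda_d - C\eps^2 2^{-2k}.
\end{equation*}
Undoing the rescaling and subtracting $\int_A\frac{|\nabla U_d|^2}{2}\,d\zeta=\Lambda_d$ yields precisely $\int_{A_k}\bigl[e_\eps(u)-\frac{|\nabla U_d|^2}{2}\bigr]\geq -C\eps^2 2^{-2k}$, which is the claim.

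The main obstacle is justifying the clean lower bound $\int_A e_{\eps_k}(u_k)\geq \Lambda_d - C\eps_k^2$ uniformly over $S_d$ and tracking that the error is genuinely $O(\eps_k^2)$ rather than, say, $O(\eps_k|\log\eps_k|)$: one must use that on $A$ the degree is $d$ on a set of radii of measure at least $3/4$, split $e_{\eps_k}=e_{\eps_k}(|v|)+\frac{|j(v)|^2}{2|v|^2}$ as in \eqref{energiesmodules}, bound $\int_{\partial B(r)}\frac{|j(v)|^2}{2|v|^2}$ from below by $\frac{2\pi^2 d^2}{\mathrm{length}}\geq \pi d^2/r$ on good circles via Cauchy-Schwarz and the degree, integrate in $r$ over the good set, and absorb the discrepancy coming from $|v|$ possibly differing from $1$ into the potential term — this is exactly where the $\eps_k^2$ (and the restriction to $\eps<\eps_\Lambda$) enters. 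Since this annulus lower bound is itself essentially \cite[Lemma 3.1]{BJS}, in the writeup I would cite it and only indicate the rescaling argument above, noting that the reference $\eps$ in that lemma becomes $\eps 2^{-k}$ here, which accounts for the factor $2^{-2k}$.
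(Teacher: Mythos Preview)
The paper does not supply its own proof here; the lemma is simply quoted from \cite{BJS}, Lemma 3.1. Your rescaling argument reducing $A_k$ to the fixed annulus $A$ with parameter $\eps_k=\eps 2^{-k}$, followed by the standard degree-$d$ annulus lower bound $\int_A e_{\eps_k}(v)\geq \pi d^2\log 2 - C\eps_k^2$ for $v\in S_d$, is exactly the argument behind that reference, and your computation is correct (including the identification $\int_{A_k}\frac{|\nabla U_d|^2}{2}=\Lambda_d$). As you yourself conclude, in the writeup it suffices to cite \cite{BJS} and record the rescaling.
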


Lemma \ref{linkenergies} below provides an upper bound for the
Ginzburg-Landau energy on sufficiently large balls in terms of the
excess energy $\er(u)-\er(\uest)$. This will enable us to rely on
results holding for the Ginzburg-Landau functional in bounded
domains in the proof of Theorem \ref{theorem1}.

\begin{lemme}[\cite{BJS}, Lemma 3.2]

\label{linkenergies} Let $d\in \mathbb{Z}$, $u\in [U_d]+H^1(\R)$,
$a_1,\ldots,a_l\in \R$ and $d_1,\ldots,d_l\in \mathbb{Z}^*$ such
that $d=\sum d_i$. Then, for $k\geq 1+ \max\{
\log_1|a_1|,\ldots,\log_2|a_l|,n(u)\}$ and $R=2^k$ we have
\begin{eqnarray*}
\int_{B(R)} e_\eps(u)-e_\eps(\uest(a_i,d_i))\leq
\er(u)-\er(\uest(a_i,d_i))+\frac{C}{R},
\end{eqnarray*}
where $C$ depends only on $l$ and $d$.
\end{lemme}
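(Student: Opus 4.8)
\textbf{Proof strategy for Lemma \ref{linkenergies}.}

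The plan is to decompose the ball $B(R)$ into the ball $B(2^{n_0})$ containing all the vortices $a_i$ together with the dyadic annuli $A_n$ for $n_0 \le n \le k-1$, where $n_0$ is chosen so that $R = 2^{n_0} > \max|a_i|$ and $n_0 \ge n(u)$. On the inner ball $B(2^{n_0})$ the integrand $e_\eps(u) - e_\eps(\uest(a_i,d_i))$ is integrated without further manipulation; the point of the lemma is to control the contributions of the outer annuli. By the definition of $\er$, writing $\er(u) - \er(\uest)$ as a limit of integrals over $B(R')$ as $R' \to +\infty$ and subtracting off the integral over $B(R)$, one gets
\begin{eqnarray*}
\int_{B(R)} e_\eps(u) - e_\eps(\uest) = \big(\er(u) - \er(\uest)\big) - \sum_{n \ge k}\int_{A_n}\Big(e_\eps(u) - \frac{|\nabla U_d|^2}{2}\Big) + \sum_{n\ge k}\int_{A_n}\Big(e_\eps(\uest) - \frac{|\nabla U_d|^2}{2}\Big),
\end{eqnarray*}
since the $|\nabla U_d|^2/2$ terms cancel in pairs inside each annulus. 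So the statement reduces to two one-sided estimates on the tail annuli: a \emph{lower} bound on $\sum_{n\ge k}\int_{A_n}(e_\eps(u) - |\nabla U_d|^2/2)$ and an \emph{upper} bound on $\sum_{n\ge k}\int_{A_n}(e_\eps(\uest) - |\nabla U_d|^2/2)$.

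The lower bound is exactly Lemma \ref{almostminimizing}: for each $n \ge k \ge n(u)$ we have $\int_{A_n}(e_\eps(u) - |\nabla U_d|^2/2) \ge -C 2^{-2n}\eps^2$, and summing the geometric series $\sum_{n \ge k} 2^{-2n} \eps^2 \le C 2^{-2k}\eps^2 \le C/R$ (using $\eps < 1$ and $R = 2^k$). For the upper bound on the $\uest$ tail, one uses the explicit asymptotics of the reference map: since all $a_i \in B(2^{n_0})$ and $k \ge n_0 + 1$, on each annulus $A_n$ with $n \ge k$ one has $|\uest(a_i,d_i)| \to 1$ and $|\nabla \uest(a_i,d_i)|^2 - |\nabla U_d|^2 = O(2^{-2n})$ pointwise, together with $(1-|\uest|^2)^2/\eps^2$ exponentially small on that scale; integrating over $A_n$ (area $\sim 2^{2n}$) gives $\int_{A_n}(e_\eps(\uest) - |\nabla U_d|^2/2) \le C 2^{-2n}\cdot 2^{2n}\cdot(\text{something decaying}) = C' 2^{-2n}$, wait — more precisely the pointwise gap is $O(|z|^{-4})$ so after multiplying by the area one gets a bound $C 2^{-2n}$, and summing over $n \ge k$ yields $C 2^{-2k} = C/R$. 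Combining the two tail estimates with the decomposition above gives the claimed inequality, with $C$ depending only on $l$ and $d$ through the constants in the asymptotic expansion of $\uest$ and in Lemma \ref{almostminimizing}.

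The main technical obstacle is the upper bound on the reference-map tail: one must produce the asymptotic expansion $|\nabla \uest(a_i,d_i)|^2 = |\nabla U_d|^2 + O(|z|^{-4})$ as $|z| \to \infty$ with a constant depending only on $l$ and $d$ (not on the precise locations $a_i$, as long as they lie in the fixed ball), and check that the potential term contributes negligibly. This is where the specific product structure of $\uest$ and the decay of the profiles $f_{1,d_i}$ enter; it is essentially a computation with $\prod (z-a_i)^{d_i}/|z-a_i|^{d_i}$ and $(z/|z|)^d$, whose ratio tends to $1$ with an error controlled by $\max|a_i|/|z|$. Since this is a soft estimate and follows the pattern of \cite{BJS}, I would carry it out by Taylor-expanding $\log\uest - \log U_d$ on each annulus, but I expect no real difficulty beyond bookkeeping of the constants.
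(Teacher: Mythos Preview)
The paper does not actually prove this lemma; it is quoted verbatim from \cite{BJS}, Lemma~3.2, so there is no in-paper argument to compare against. Your strategy --- write $\er(u)-\er(\uest)$ as the integral over $B(R)$ plus a dyadic tail, then bound the $u$-tail from below via Lemma~\ref{almostminimizing} and the $\uest$-tail from above via the explicit asymptotics of the reference map --- is exactly the natural route and is correct in outline.

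Two small computational slips in your tail estimate for $\uest$. First, the pointwise gap $|\nabla \uest|^2 - |\nabla U_d|^2$ is in general $O\!\big(R_a\,|z|^{-3}\big)$, not $O(|z|^{-4})$: expanding $\sum d_i/(z-a_i) = d/z + (\sum d_i a_i)/z^2 + \cdots$ and squaring produces a cross term of order $|z|^{-3}$ which does not vanish unless $\sum d_i a_i = 0$. Second, $2^{-2k} = R^{-2}$, not $C/R$. With the correct exponent one gets $\int_{A_n}\big(e_\eps(\uest)-\tfrac12|\nabla U_d|^2\big) = O(2^{-n})$, and summing over $n\ge k$ gives $O(2^{-k}) = O(R^{-1})$ as claimed. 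In fact the paper records precisely this annulus estimate as Lemma~\ref{energyonannulii}, so you can simply cite it rather than redoing the expansion by hand; this also makes the dependence of the constant transparent (it carries the factor $R_a$, which is harmless for every later use of the lemma in the paper).
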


\subsection{Explicit identities for the reference map $\uest$}

We present here an account of some classical identities for the
energy of $\uest$, which are borrowed from \cite{BJS}.

In the sequel, we consider a configuration $(a_i,d_i)$ with $d_i\in
\mathbb{Z}^*$ and we set $d=\sum d_i$. We begin with an explicit
expansion near each vortex $a_j$.

\begin{lemme}
\label{energynearthecore} For $j\in\{1,\ldots,l\}$ and $0<\eps<1$,
\begin{eqnarray*}
\int_{B(a_j,r)} e_\eps(\uest(a_i,d_i)) =\pi d_j^2
\log(\frac{r}{\eps})+\gamma(|d_j|)+O(\frac{r}{r_a})^2+O(\frac{\eps}{r})^2
\end{eqnarray*}
where $\gamma(|d_j|)$ is some universal constant. 
\end{lemme}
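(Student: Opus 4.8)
The plan is to compute $\int_{B(a_j,r)} e_\eps(\uest(a_i,d_i))$ directly, exploiting the product structure of $\uest$ and the fact that near $a_j$ the field is dominated by the single factor $u_{\eps,d_j}(z-a_j)$, with the other factors contributing smooth corrections. First I would use the decomposition \eqref{energiesmodules}, writing $e_\eps(\uest) = e_\eps(|\uest|) + |j(\uest)|^2/|\uest|^2$, and note that since $|\uest| = \prod_i f_{1,d_i}(|z-a_i|/\eps)$, on $B(a_j,r)$ (with $r$ small compared to $r_a = \min_{i\neq j}|a_i - a_j|$) all factors $f_{1,d_i}(|z-a_i|/\eps)$ for $i\neq j$ equal $1 + O(\eps^2/r_a^2)$ together with their derivatives, so $|\uest| = f_{1,d_j}(|z-a_j|/\eps)$ up to such errors, and the potential term plus $|\nabla|\uest||^2$ reduces to that of a single vortex profile.

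The main computation is then the phase/momentum term. Write $\uest = \rho e^{i\theta}$ with $\theta = \sum_i d_i \arg(z - a_i)$; then $j(\uest) = \rho^2 \nabla\theta$ and $|j(\uest)|^2/|\uest|^2 = \rho^2 |\nabla\theta|^2$. Split $\nabla\theta = d_j \nabla\arg(z-a_j) + \nabla\psi_j$ where $\psi_j = \sum_{i\neq j} d_i\arg(z-a_i)$ is harmonic and smooth on $B(a_j,r)$ with $|\nabla\psi_j| = O(1/r_a)$. The cross term $2 d_j \rho^2 \nabla\arg(z-a_j)\cdot\nabla\psi_j$ integrates to $O((r/r_a)^2)$ after integrating in the angular variable (the radial factors being even in angle while $\nabla\arg(z-a_j)$ is purely angular, one gets cancellation against $\nabla\psi_j(a_j)$ to leading order, leaving a term controlled by $r^2|\nabla\psi_j|^2 = O((r/r_a)^2)$), and the $|\nabla\psi_j|^2$ term contributes $O((r/r_a)^2)$ directly. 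What remains is $\int_{B(a_j,r)} \big[ e_\eps(f_{1,d_j}) + f_{1,d_j}^2 \, d_j^2/|z-a_j|^2 \big]$, i.e. the radial energy of the standard degree-$d_j$ vortex on a ball of radius $r$, which by the classical ODE analysis of the profile (and the logarithmic divergence of $d_j^2 \int_\eps^r s^{-1}\,ds$) equals $\pi d_j^2 \log(r/\eps) + \gamma(|d_j|) + O(\eps^2/r^2)$, the constant $\gamma(|d_j|)$ being precisely $\lim_{R\to\infty}\big(\int_{B(R)} e_1(f_{1,d_j}) + f_{1,d_j}^2 d_j^2/|z|^2 - \pi d_j^2 \log R\big)$, a universal quantity depending only on $|d_j|$.

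The main obstacle is organizing the error bookkeeping cleanly: one must verify that replacing the full $|\uest|$ by the single profile $f_{1,d_j}(|z-a_j|/\eps)$ in all three pieces (the potential, $|\nabla|\uest||^2$, and the momentum density) produces only errors of the claimed orders $O(r^2/r_a^2)$ and $O(\eps^2/r^2)$, uniformly in $\eps$. This requires the decay estimate $1 - f_{1,d_i}(s) = O(1/s^2)$ and $f_{1,d_i}'(s) = O(1/s^3)$ as $s\to\infty$ (standard for the Ginzburg-Landau profile), so that for $i\neq j$ on $B(a_j,r)$ one has $1 - f_{1,d_i}(|z-a_i|/\eps) = O(\eps^2/r_a^2)$, and then checking that these propagate correctly through the products and through the integration over $B(a_j,r)$ — in particular the potential term $\int_{B(a_j,r)} (1-|\uest|^2)^2/(4\eps^2)$ is genuinely $O(\eps^2/r^2) \cdot$(bounded) away from $a_j$ and contributes a convergent constant near it. Since all these are classical facts about the profile equation, I would cite \cite{BJS} (and ultimately \cite{BBH}) for the precise profile decay and the definition of $\gamma$, and present the above decomposition with the error estimates as the body of the proof.
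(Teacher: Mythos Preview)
The paper does not give its own proof of this lemma: Section 3.2 explicitly states that these identities are ``borrowed from \cite{BJS}'' (and ultimately go back to \cite{BBH}), and the lemma is simply stated without argument. Your proposal is the standard direct computation that underlies the cited result, and it is correct: decompose via \eqref{energiesmodules}, replace $|\uest|$ by the single profile $f_{1,d_j}(|z-a_j|/\eps)$ using the profile decay $1-f_{1,d}(s)=O(s^{-2})$, split the phase gradient into the singular piece $d_j\nabla\arg(z-a_j)$ and the smooth harmonic remainder $\nabla\psi_j$ with $|\nabla\psi_j|=O(1/r_a)$, and use angular cancellation to kill the cross term to order $O((r/r_a)^2)$. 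One minor point of bookkeeping worth tightening is the contribution of the modulus corrections from the $i\neq j$ factors to the \emph{potential} term $(1-|\uest|^2)^2/(4\eps^2)$: the error $O(\eps^2/r_a^2)$ in $|\uest|^2$ is divided by $\eps^2$, so you should check that after multiplication by $(1-f_{1,d_j}^2)$ and integration this still yields only $O((r/r_a)^2)+O((\eps/r)^2)$; this goes through because $\int_{B(r)}(1-f_{1,d_j}^2)/\eps^2$ is uniformly bounded, but it deserves a line.
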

On the other hand, $\uest(a_i,d_i)$ behaves as $u^*(a_i,d_i)$ away
from the vortices, so its energy on $\Omega_{R,r}=B(R)\setminus \cup
B(a_j,r)$ is close to the energy of $u^*(a_i,d_i)$ on $\Omega_{R,r}$
which we can compute explicitely (see \cite{BBH}). Combining the
previous expansions, we obtain

\begin{prop}
\label{energyonball}
 Let
\begin{eqnarray*}
r_a=\frac{1}{8} \min_{i\neq j} \{|a_i-a_j|\},\qquad R_a=\max
\{|a_i|\}.
\end{eqnarray*}
Then for $R>R_a+1$,we have as $\eps\to 0$
\begin{eqnarray*}
\int_{B(R)} e_\eps(\uest(a_i,d_i))=\pi \sum _{i=1}^l d_i^2 \leps
+W(a_i,d_i) +\sum_{i=1}^l \gamma(|d_i|)\\\hspace*{-2em}+ \pi d^2
\log R + O(\frac{R_a}{R})+o_\eps(1).
\end{eqnarray*}
\end{prop}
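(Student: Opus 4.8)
The plan is to decompose the ball $B(R)$ into the union of the small balls $B(a_j, r)$ around each vortex, with $r$ chosen in the range $\eps \ll r \ll r_a$, and the exterior domain $\Omega_{R,r} = B(R) \setminus \cup_j B(a_j, r)$, and to treat each piece separately using the two ingredients already assembled. On each core ball, Lemma \ref{energynearthecore} gives
\begin{equation*}
\int_{B(a_j,r)} e_\eps(\uest(a_i,d_i)) = \pi d_j^2 \log\Big(\frac{r}{\eps}\Big) + \gamma(|d_j|) + O\big((r/r_a)^2\big) + O\big((\eps/r)^2\big),
\end{equation*}
so summing over $j$ produces the terms $\pi \sum_i d_i^2 |\log \eps|$, $\pi \sum_i d_i^2 \log r$, and $\sum_i \gamma(|d_i|)$, plus controlled errors. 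The remaining task is to show that the energy on $\Omega_{R,r}$ equals $W(a_i,d_i) - \pi \sum_i d_i^2 \log r + \pi d^2 \log R + O(R_a/R) + o_\eps(1)$.

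For the exterior region, I would first replace $\uest(a_i,d_i)$ by the $S^1$-valued map $u^*(a_i,d_i) = \prod_i \big((z-a_i)/|z-a_i|\big)^{d_i}$. Away from the vortices these two maps differ only through the profiles $f_{1,d_i}$, which converge to $1$ with exponentially small (in $r/\eps$, hence in a negative power of $\eps$ once $r$ is a fixed small constant, or more carefully with the stated $o_\eps(1)$ once one lets $\eps\to0$ before $r$) corrections to both $|\nabla|u||$ and the potential term; thus
\begin{equation*}
\int_{\Omega_{R,r}} e_\eps(\uest(a_i,d_i)) = \int_{\Omega_{R,r}} \frac{|\nabla u^*(a_i,d_i)|^2}{2} + o_\eps(1).
\end{equation*}
Then I would invoke the explicit computation of the Dirichlet energy of a canonical harmonic map with prescribed singularities from \cite{BBH}: on $\Omega_{R,r}$ one has
\begin{equation*}
\int_{\Omega_{R,r}} \frac{|\nabla u^*(a_i,d_i)|^2}{2} = \pi \sum_i d_i^2 \log\Big(\frac{1}{r}\Big) + \pi d^2 \log R - \pi \sum_{i\neq j} d_i d_j \log|a_i - a_j| + O(R_a/R) + O(r/r_a),
\end{equation*}
where the $\pi d^2 \log R$ term comes from the behavior $|\nabla u^*|^2 \sim d^2/|z|^2$ on the outer annulus $B(R)\setminus B(R_a+1)$ and the cross terms $-\pi\sum_{i\neq j}d_id_j\log|a_i-a_j| = W(a_i,d_i)$ assemble the Kirchhoff--Onsager functional; this is the standard Bethuel--Brezis--Hélein expansion and I would cite it rather than rederive it.

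Adding the core contributions and the exterior contribution, the $\pm\pi\sum_i d_i^2\log r$ terms cancel exactly, leaving precisely $\pi\sum_i d_i^2|\log\eps| + W(a_i,d_i) + \sum_i\gamma(|d_i|) + \pi d^2\log R + O(R_a/R) + o_\eps(1)$, which is the claimed identity. The only point requiring care — and the main obstacle — is the bookkeeping of the two small parameters: one must verify that the error terms $O((r/r_a)^2)$, $O((\eps/r)^2)$, $O(r/r_a)$ from the three pieces can all be absorbed into $O(R_a/R) + o_\eps(1)$ uniformly. This is handled by first fixing $r$ as a small fixed fraction of $r_a$ so that $O(r/r_a)$ is harmless in the stated accuracy (or, if one wants genuine smallness, by an $\eps$-dependent choice such as $r = |\log\eps|^{-1}$, letting $\eps\to0$ so that $\eps/r\to0$ and $r/r_a\to0$), at which stage the profile corrections are genuinely $o_\eps(1)$; the rest is routine.
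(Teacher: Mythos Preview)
Your proposal is correct and follows precisely the route indicated in the paper: the paragraph immediately preceding the proposition says that one combines Lemma~\ref{energynearthecore} on the core balls with the explicit BBH computation of $\int_{\Omega_{R,r}}|\nabla u^*|^2/2$ on the exterior, after observing that $\uest$ and $u^*$ differ negligibly there. Your only slight slip is the first suggestion of fixing $r$ as a constant fraction of $r_a$, which would leave a nonvanishing $O(r/r_a)$; the $\eps$-dependent choice you mention next (e.g.\ $r=\eps^{1/2}$ or $r=|\log\eps|^{-1}$) is the right one and makes all three auxiliary errors genuinely $o_\eps(1)$.
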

We observe that as $R\to + \infty$, we have $\pi \log^2 R \sim \int
_{B(R)}\frac{|\nabla U_d|^2}{2}$. This yields the following
expansion for the renormalized energy
\begin{coro}
\label{renormalizedenergy} When $\eps \to 0$, the following holds
\begin{equation*}
\begin{split}
\er(\uest(a_i,d_i))=\pi \sum _{i=1}^l d_i^2 \leps &+W(a_i,d_i)\\
&+\sum_{i=1}^l \gamma(|d_i|)-\int_{B(1)}\frac{|\nabla
Ud|^2}{2}+o_\eps(1).
\end{split}
\end{equation*}
\end{coro}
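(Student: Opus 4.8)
The plan is to read off the expansion directly from Proposition \ref{energyonball} by subtracting the diverging part $\int_{B(R)}\frac{|\nabla U_d|^2}{2}$ and then letting $R\to+\infty$. Recall that, by the very definition of the renormalized energy and since $\uest(a_i,d_i)\in[U_d]$ whenever $\sum d_i=d$, one has
\[
\er(\uest(a_i,d_i))=\lim_{R\to+\infty}\int_{B(R)}\Big(e_\eps(\uest(a_i,d_i))-\frac{|\nabla U_d|^2}{2}\Big),
\]
a finite limit for each fixed $\eps$; so it suffices to identify it.

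First I would compute the subtracted term. Since $U_d=(z/|z|)^d$ on $\R\setminus B(0,1)$, we have $|\nabla U_d(z)|^2=d^2/|z|^2$ for $|z|>1$, hence by an elementary computation in polar coordinates, for every $R>1$,
\[
\int_{B(R)}\frac{|\nabla U_d|^2}{2}=\int_{B(1)}\frac{|\nabla U_d|^2}{2}+\frac{d^2}{2}\int_{B(R)\setminus B(1)}\frac{dz}{|z|^2}=\int_{B(1)}\frac{|\nabla U_d|^2}{2}+\pi d^2\log R.
\]
Inserting this together with the expansion of Proposition \ref{energyonball}, we get, for $R>R_a+1$,
\[
\int_{B(R)}\Big(e_\eps(\uest)-\tfrac{|\nabla U_d|^2}{2}\Big)=\pi\sum_{i=1}^l d_i^2\leps+W(a_i,d_i)+\sum_{i=1}^l\gamma(|d_i|)-\int_{B(1)}\frac{|\nabla U_d|^2}{2}+O(R_a/R)+o_\eps(1),
\]
the two contributions $\pi d^2\log R$ cancelling exactly. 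Letting $R\to+\infty$ removes the $O(R_a/R)$ term and produces the announced identity.

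The only subtle point — which I regard as the main obstacle — is the legitimacy of interchanging $R\to+\infty$ and $\eps\to0$, since the remainder $o_\eps(1)$ produced by Proposition \ref{energyonball} may a priori depend on $R$. I would bypass this with a tail estimate uniform in $\eps\in(0,1)$: away from the vortices $\uest(a_i,d_i)$ carries the phase $\sum_i d_i\arg(z-a_i)$ of $u^*(a_i,d_i)$, with $1-|\uest(a_i,d_i)|^2=O(\eps^2/|z|^2)$, while $u^*(a_i,d_i)=(z/|z|)^d(1+O(R_a/|z|))$ as $|z|\to+\infty$; using \eqref{modules} and \eqref{energiesmodules} this gives $\big|e_\eps(\uest(a_i,d_i))(z)-\tfrac12|\nabla U_d(z)|^2\big|=O(|z|^{-3})$ as $|z|\to+\infty$, with a constant depending only on $l$, $d$ and the $a_i$ and uniform in $\eps$. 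Integrating over annuli shows that $R\mapsto\int_{B(R)}(e_\eps(\uest)-\tfrac12|\nabla U_d|^2)$ is Cauchy as $R\to+\infty$ with rate $O(R_a/R)$ uniform in $\eps$, so $\er(\uest)=\int_{B(R)}(e_\eps(\uest)-\tfrac12|\nabla U_d|^2)+O(R_a/R)$ uniformly in $\eps$. Combining this with the displayed expansion above, for each fixed $R$ we get $\er(\uest)=\big[\text{the claimed main terms}\big]+O(R_a/R)+o_\eps(1)$; taking $\limsup_{\eps\to0}$ and then $R\to+\infty$ yields $\er(\uest)=\big[\text{main terms}\big]+o_\eps(1)$, which is the corollary. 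Everything else is routine.
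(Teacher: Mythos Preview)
Your proof is correct and follows essentially the same route as the paper, which simply observes that $\int_{B(R)}\tfrac{|\nabla U_d|^2}{2}=\int_{B(1)}\tfrac{|\nabla U_d|^2}{2}+\pi d^2\log R$ and reads the corollary off Proposition~\ref{energyonball}. Your added tail estimate to justify the interchange of the limits $R\to+\infty$ and $\eps\to 0$ is more care than the paper gives explicitly (the $o_\eps(1)$ in Proposition~\ref{energyonball} is in fact independent of $R$, coming only from the core expansions of Lemma~\ref{energynearthecore}), but it is sound and the overall argument is the intended one.
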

Concerning the energy on annuli, we finally quote the following
result
\begin{lemme}
\label{energyonannulii}
 For $R>R_a$, we have
\begin{equation*}
\int_{B(2R)/B(R)} e_\eps(\uest(a_i,d_i))=\pi d^2 \log 2
+O\big(\frac{R_a}{R}\big)
\end{equation*}
or, in view of the properties of $U_d$ at infinity,
\begin{equation*}
\int_{B(2R)/B(R)} e_\eps(\uest(a_i,d_i))=\int_{B(2R)/B(R)}
\frac{|\nabla U_d|^2}{2} +O\big(\frac{R_a}{R}\big).
\end{equation*}
\end{lemme}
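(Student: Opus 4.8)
The plan is to prove Lemma \ref{energyonannulii} by reducing the computation to the model map $u^*(a_i,d_i)=\prod_i\bigl(\frac{z-a_i}{|z-a_i|}\bigr)^{d_i}$, for which the energy on an annulus $B(2R)\setminus B(R)$ with $R>R_a$ is purely of $|\nabla|$-type (since $|u^*|\equiv 1$ there) and can be computed exactly. First I would write, on $\Omega=B(2R)\setminus B(R)$, the identity $e_\eps(\uest)=e_\eps(|\uest|)+\frac{|j(\uest)|^2}{|\uest|^2}$ from \eqref{energiesmodules}, and compare termwise with $u^*$. On $\Omega$ none of the vortices $a_i$ lies inside $B(R)^c$ from the wrong side, so $|z-a_i|\geq R-R_a\geq$ const$\cdot R$; the profile equations give $|\uest|=1-O(\eps^2/R^2)$ pointwise together with $|\nabla|\uest||=O(\eps^2/R^3)$, so the potential term $V(\uest)$ and the $|\nabla|\uest||^2$ term each contribute only $O(\eps^4/R^4)\cdot|\Omega|=O(\eps^4/R^2)$, which is absorbed in $O(R_a/R)$ for $R$ not too large — and in any case negligible.

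Next I would handle the main term $\int_\Omega \frac{|j(\uest)|^2}{|\uest|^2}$. Since $j(\uest)/|\uest|^2 = j(u^*) + (\text{corrections of size }O(\eps^2/R^2)\text{ per vortex})$ and $|j(u^*)|=O(1/R)$ on $\Omega$, the difference between $\int_\Omega \frac{|j(\uest)|^2}{|\uest|^2}$ and $\int_\Omega|j(u^*)|^2=\int_\Omega|\nabla u^*|^2$ is $O(\eps^2/R^4)\cdot|\Omega|+O(\eps^2/R^2)\cdot(1/R)\cdot|\Omega|$, again negligible. So it remains to compute $\int_\Omega \frac{|\nabla u^*|^2}{2}$ exactly. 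Writing $u^*=e^{i\phi}$ with $\phi=\sum_i d_i\arg(z-a_i)$, we have $\nabla\phi=\sum_i d_i\frac{(z-a_i)^\bot}{|z-a_i|^2}$, and $|\nabla\phi|^2=\bigl|\sum_i d_i\frac{(z-a_i)^\bot}{|z-a_i|^2}\bigr|^2$. Expanding $\frac{(z-a_i)^\bot}{|z-a_i|^2}=\frac{z^\bot}{|z|^2}+O(R_a/R^2)$ for $|z|\in[R,2R]$ and $|a_i|\leq R_a$, I get $\nabla\phi=d\,\frac{z^\bot}{|z|^2}+O(R_a/R^2)$ where $d=\sum d_i$, hence $|\nabla\phi|^2=\frac{d^2}{|z|^2}+O(R_a/R^3)$. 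Integrating in polar coordinates, $\int_\Omega\frac{d^2}{2|z|^2}\,dz=\frac{d^2}{2}\int_R^{2R}\frac{2\pi r}{r^2}\,dr=\pi d^2\log 2$, and the error term integrates to $O(R_a/R^3)\cdot|\Omega|=O(R_a/R)$. This gives the first displayed formula.

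For the second displayed formula I would simply invoke the stated property of $U_d$, namely $U_d=(z/|z|)^d$ outside $B(0,1)$, so for $R>1$ we have exactly $\int_{B(2R)\setminus B(R)}\frac{|\nabla U_d|^2}{2}=\int_{B(2R)\setminus B(R)}\frac{d^2}{2|z|^2}\,dz=\pi d^2\log 2$; substituting this into the first formula yields the second. (If one prefers to allow $R_a<1$, the same estimate holds once $R>R_a$ is large enough that $B(2R)\setminus B(R)$ avoids $B(0,1)$, which is the regime of interest anyway, and the general case follows by noting the $O(R_a/R)$ term already dominates any discrepancy coming from the single unit ball.)

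I expect the only genuinely delicate point to be bookkeeping the profile corrections $f_{1,d_i}(|z-a_i|/\eps)=1-O(\eps^2/|z-a_i|^2)$ uniformly on $\Omega$ and checking that every one of the three error sources — the potential, $|\nabla|\uest||^2$, and the mismatch between $j(\uest)/|\uest|^2$ and $j(u^*)$ — is indeed $o(1)$ and in fact $O(R_a/R)$ once $R\gg R_a$; all of these are routine given the exponential/algebraic decay of the optimal profiles and the lower bound $|z-a_i|\geq R-R_a$ on the annulus, but they must be tracked to justify collapsing everything onto the explicit computation for $u^*$. No compactness or PDE input is needed: this is an essentially explicit estimate, parallel to Proposition \ref{energyonball} restricted to a far annulus.
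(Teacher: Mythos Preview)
Your argument is correct and is the natural direct computation. Note that the paper itself does not supply a proof of this lemma: the whole of Section~3.2 is stated as ``borrowed from \cite{BJS}'', and Lemma~\ref{energyonannulii} is quoted without argument. Your approach---replace $\uest$ by $u^*$ on the far annulus up to profile errors of order $O(\eps^2/R^2)$, then Taylor-expand $\nabla\arg(z-a_i)$ about $\nabla\arg z$ to extract $\pi d^2\log 2$ with remainder $O(R_a/R)$---is exactly the standard route and is what underlies the cited result.

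Two minor remarks. First, since $\uest=|\uest|\,u^*$ one has $j(\uest)/|\uest|^2=\nabla\phi=j(u^*)$ \emph{exactly}, so the only discrepancy in the phase term is the prefactor $|\uest|^2-1=O(\eps^2/R^2)$; your bookkeeping simplifies accordingly. Second, the identity \eqref{energiesmodules} in the paper is missing a factor $\tfrac12$ in front of $|j(u)|^2/|u|^2$; you silently restore it when you pass to $\int_\Omega\frac{|\nabla u^*|^2}{2}$, which is the correct normalization yielding $\pi d^2\log 2$.
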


\section{Coercivity}

In this section, we supplement some results from \cite{BJS} and
\cite{JS1} with estimates which we will later need. These results
establish precise estimates in various norms for maps $u$
 being close to $\uest(a_i,d_i)$ in
terms of the excess energy with respect to the configuration
$(a_i,d_i)$. For a map $u\in [U_d]+H^1(\R)$ and a given
configuration $(a_i,d_i)$ with $d_i=\pm 1$, we define this excess
energy $\Sigma_\eps$ as
\begin{equation*}
\Sigma_\eps=\Sigma_\eps(a_i,d_i)=\er(u)-\er(\uest(a_i,d_i)).
\end{equation*}
We also set
\begin{equation*}
r_a=\frac{1}{8}\min_{i\neq j} \{|a_i-a_j|\},\qquad
R_a=\max_{i=1,\ldots,l} \{|a_i|\}.
\end{equation*}

\begin{thm}
\label{coercivitytheorem} Let $r\leq r_a$ and $2^{n_0}=R_0>R_a$ such
that $\cup_{i=1}^l B(a_i,r)\subset B(R_0)$. Then there exist
$\eps_0$ and $\eta_0$ depending only on $l$, $r$, $r_a$, $R_a$,
$R_0$ satisfying the following property. For all $u\in
[U_d]+H^1(\R)$ such that
\begin{equation}
\label{c1} \eta=\|Ju-\pi \ssum d_i
\delta_{a_i}\|_{W_0^{1,\infty}(B(R))^*} \leq \eta_0
\end{equation}
and
\begin{equation}
\label{c2}
 2^{n(u)}\leq R_0,
\end{equation}
then for $\eps \leq \eps_0$ we have
\begin{equation}
\begin{split}
\label{coercivity} \int_{B(R_0)\setminus \cup B(a_i,r)}
e_\eps(|u|)+\frac{1}{8} \Big|\frac{j(u)}{|u|}
-j(u^*(a_i,d_i))\Big|^2 \leq \Sigma_\eps
+C(\eta,\eps,\frac{1}{R_0}),
\end{split}
\end{equation}
where $C$ is a continuous function on $\mathbb{R}^3$ vanishing at
the origin. Furthermore, there exist points $b_i\in B(a_i, r/2)$
such that
\begin{equation}
\label{concentrationjacobien} \|Ju-\pi \ssum d_i
\delta_{b_i}\|_{W_0^{1,\infty}(B(R_0))^*}\leq f(R_0,\Sigma_\eps)
\eps |\log \eps|
\end{equation}
and
\begin{equation}
\label{concentrationenergie}
 \|\mu_\eps(u)-\pi \ssum
\delta_{b_i}\|_{W_0^{1,\infty}(B(R_0))^*} \leq
\frac{g(R_0,r,r_a,\Sigma_\eps)}{|\log \eps|},
\end{equation}
where $f$ and $g$ are continuous functions on $\R$ and
$\mathbb{R}^4$.
\end{thm}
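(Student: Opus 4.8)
\medskip
\noindent\textit{Proof strategy.}
The plan is to transfer the problem from $\R$ to the fixed ball $B(R_0)$ by means of Lemma~\ref{linkenergies}, then to run separately a coercivity argument on $\Omega:=B(R_0)\setminus\bigcup_i B(a_i,r)$, where $\uest(a_i,d_i)$ is close to $u^*(a_i,d_i)$, and the classical lower bounds near each vortex, and finally to read off the concentration estimates \eqref{concentrationjacobien}--\eqref{concentrationenergie} from quantitative localization results of \cite{JS1}. The constants $\eps_0,\eta_0$ will be chosen along the way so as to make all the error terms below meaningful.

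First, by \eqref{c2} and the assumptions on $R_0$, Lemma~\ref{linkenergies} applies with $R=R_0$ and gives
\[
\int_{B(R_0)}\bigl(e_\eps(u)-e_\eps(\uest(a_i,d_i))\bigr)\,dx\le \Sigma_\eps+\frac{C}{R_0},
\]
whence, by Proposition~\ref{energyonball}, the uniform energy bound $E_\eps(u,B(R_0))\le \pi l\,\leps+C(R_0,\Sigma_\eps)+o_\eps(1)$. Next, taking $\eta_0$ small in \eqref{c1} forces, for each $i$, that $u$ does not vanish and has degree $d_i$ on $\partial B(a_i,\rho)$ for most $\rho\in(r/2,r)$; since $d_i^2=1$, the classical Ginzburg--Landau lower bound (see \cite{JS1}) then gives
\[
\int_{B(a_i,r)}e_\eps(u)\ge \pi\log\frac r\eps+\gamma(1)-C(\eta,\eps),
\]
where $C(\eta,\eps)\to0$ as $(\eta,\eps)\to(0,0)$, to be compared with Lemma~\ref{energynearthecore}. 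Subtracting these lower bounds from the bound on $B(R_0)$ and evaluating $\int_\Omega e_\eps(\uest(a_i,d_i))$ and $\int_{B(a_i,r)}e_\eps(\uest(a_i,d_i))$ through Proposition~\ref{energyonball} and Lemma~\ref{energynearthecore}, I get both $\int_\Omega\bigl(e_\eps(u)-e_\eps(\uest(a_i,d_i))\bigr)\le \Sigma_\eps+C(\eta,\eps,1/R_0)$ and $\int_{B(a_i,r)}\bigl(e_\eps(u)-e_\eps(\uest(a_i,d_i))\bigr)\le \Sigma_\eps+C(\eta,\eps,1/R_0)$.

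On $\Omega$ one has $e_\eps(|\uest(a_i,d_i)|)=O(\eps^2 r_a^{-2})$ and $j(\uest(a_i,d_i))/|\uest(a_i,d_i)|=j(u^*(a_i,d_i))+O(\eps^2 r_a^{-2})$, so by \eqref{energiesmodules} the density $e_\eps(u)-e_\eps(\uest(a_i,d_i))$ splits there into $e_\eps(|u|)$, a positive multiple of $\bigl|\tfrac{j(u)}{|u|}-j(u^*(a_i,d_i))\bigr|^2$, a term linear in $\tfrac{j(u)}{|u|}-j(u^*(a_i,d_i))$, and an $o_\eps(1)$ remainder. Since $j(u^*(a_i,d_i))=\nabla^\bot\bigl(\sum_i d_i\log|z-a_i|\bigr)$ is smooth on $\Omega$, the linear term is handled by integration by parts: its interior part is $-\int_\Omega\bigl(\sum_i d_i\log|z-a_i|\bigr)\,\mathrm{curl}\bigl(\tfrac{j(u)}{|u|}-j(u^*(a_i,d_i))\bigr)$, which, because $\mathrm{curl}\,j(u)=2J(u)$ and $J(u)$ is by \eqref{c1} $\eta$-close to $\pi\sum d_i\delta_{a_i}$ \emph{whose support avoids $\Omega$}, is bounded by $C(r,R_0)\bigl(\eta+(\text{coercivity error})\bigr)$; the boundary terms on $\partial B(a_i,r)$ are governed by the circulation of $u$, which matches that of $u^*(a_i,d_i)$ up to $\eta$ again by \eqref{c1}. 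Arranging the constants so that at most $\tfrac38\int_\Omega\bigl|\tfrac{j(u)}{|u|}-j(u^*(a_i,d_i))\bigr|^2$ gets absorbed then leaves the coefficient $\tfrac18$ and, combined with the previous paragraph, produces \eqref{coercivity}.

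For the concentration estimates I will invoke the refined Jacobian and energy estimates of \cite{JS1}. The energy bound $E_\eps(u,B(R_0))\le C(R_0,\Sigma_\eps)\,\leps$ yields points $b_i\in B(a_i,r/2)$ with $\|Ju-\pi d_i\delta_{b_i}\|_{W_0^{1,\infty}(B(a_i,r))^*}\le f\,\eps\,\leps$; on $\Omega$ one has $\|Ju\|_{W_0^{1,\infty}(\Omega)^*}\le C\eps\,\leps$ from \eqref{c1} and \eqref{coercivity}, and patching gives \eqref{concentrationjacobien}. Since, by the second paragraph, the excess energy of $u$ on each $B(a_i,r)$ is $O(1)$, the lower bounds $\int_{B(b_i,\rho)}e_\eps(u)\ge\pi\log(\rho/\eps)-C$ pin down the energy carried by each dyadic annulus around $b_i$; summing over dyadic scales---each annulus carrying $\mu_\eps$-mass $O(1/\leps)$ at distance $O(\rho)$ from $b_i$---gives $\|\mu_\eps(u)-\pi\delta_{b_i}\|_{W_0^{1,\infty}(B(a_i,r))^*}\le g/\leps$, while on $\Omega$ the measure $\mu_\eps(u)$ has total mass $O(1/\leps)$; this yields \eqref{concentrationenergie}. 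The main obstacle will be the third paragraph: making quantitative the way the weak control \eqref{c1} on $Ju$ and the pointwise closeness of $|u|$ to $1$ combine, via the integration by parts, to absorb the linear term into $\tfrac38\int_\Omega|\tfrac{j(u)}{|u|}-j(u^*(a_i,d_i))|^2$ with an error tending to $0$ as $(\eta,\eps,1/R_0)\to0$; the near-vortex lower bounds of the second paragraph, which must accommodate the possible vanishing of $u$ and extract the exact degree from \eqref{c1}, are the other delicate point.
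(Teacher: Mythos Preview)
Your outline is essentially sound, but it differs markedly from what the paper actually does. The paper's proof is almost entirely a citation: \eqref{coercivity} and \eqref{concentrationjacobien} are simply quoted from Theorem~6.1 of \cite{BJS}, and only \eqref{concentrationenergie} is argued anew. For that last estimate the paper proceeds much as you do in your final paragraph---localize via \eqref{c1}, get a lower bound $K_0^i\ge C(r)$ from Theorem~3 in \cite{JS1}, combine with Lemma~\ref{linkenergies} and Proposition~\ref{energyonball} to get $K_0^i\le C+\Sigma_\eps$ and a small mass for $\mu_\eps$ outside the vortex balls---but then, rather than your dyadic-annulus summation, it invokes Theorem~2' of \cite{JS1} (energy concentrates at the same $b_i$ as the Jacobian) together with Theorem~3.2.1 of \cite{CJ2} to obtain the local $W_0^{1,\infty}$ bound directly.

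Your direct attack on \eqref{coercivity} in paragraph three is in the spirit of the argument inside \cite{BJS}, so it is not wrong, but note a technical slip: you integrate by parts $\int_\Omega j(u^*)\cdot\bigl(\tfrac{j(u)}{|u|}-j(u^*)\bigr)$ and then invoke $\mathrm{curl}\,j(u)=2J(u)$, whereas the curl falling on the integrand is $\mathrm{curl}\bigl(\tfrac{j(u)}{|u|}\bigr)$, which differs from $2J(u)$ by a term of size $|\nabla|u||\cdot|j(u)|/|u|^2$. This extra term is indeed controllable by $e_\eps(|u|)$ and the energy bound on $\Omega$, but it has to be absorbed explicitly before you can close the $\tfrac18$ coefficient; as written the step is incomplete. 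The boundary terms on $\partial B(a_i,r)$ also need more than ``circulation matches up to $\eta$'': one must first select good radii on which $|u|$ is close to $1$ and the tangential derivative is controlled, which is where the smallness of $\eps_0,\eta_0$ enters. None of this is fatal---it is exactly the work done in \cite{BJS}---but if you intend to bypass the citation you should flag these two absorptions as the substantive steps rather than the ones you currently call ``the main obstacle''.
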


\begin{proof}
Except for the energy concentration \eqref{concentrationenergie},
each of the other statements are already proved in Theorem 6.1 of
\cite{BJS}. We first infer from \eqref{c1} that for all $i$
$\|Ju-\pi d_i \delta_{a_i}\|_{W_0^{1,\infty}(B(a_i,r))^*}\leq
\eta_0$. If $\eta_0$ is sufficiently small with respect to $r$ this
gives in view of Theorem 3 in \cite{JS1} $ K_0^i \geq C(r),$ where
$K_0^i$ is the local excess energy near the vortex $i$ defined by $
K_0^i=\int_{B(a_i,r)} e_\eps(u)-\pi \log (\frac{r}{\eps})$. It
follows that
\begin{equation*}
\int_{B(a_i,r)} e_\eps(u)\leq \int_{B(R_0)} e_\eps(u) -\pi(l-1)\leps
-C(r).
\end{equation*}
On the other hand, since $n(u)\leq n_0$, we have according to Lemma
\ref{linkenergies} and Proposition \ref{energyonball}
\begin{equation*}
\int_{B(R_0)} e_\eps(u)\leq\int_{B(R_0)} e_\eps(\uest(a_i,d_i))
+\Sigma_\eps +\frac{C}{R_0} \leq \pi l\leps +\Sigma_\eps +C.
\end{equation*}
This first implies that $K_0^i \leq C+ \Sigma_\eps$. Also, replacing
$r$ by $3r/4$ we see that $ \int _{B(R_0)\setminus \cup B(a_i,3r/4)}
\mu_\eps(u)\leq (C+\Sigma_\eps)|\log \eps|^{-1}$, where $C$ only
depends on $R_0,r,r_a,R_a$.

Now, according to Theorem 2' in \cite{JS1}, the energy density
$\mu_\eps(u)$ on $B(a_i,r)$ concentrates at the point $b_i\in
B(a_i,r/2)$ where $J(u_\eps)$ concentrates. From Theorem 3.2.1 in
\cite{CJ2} and the estimate for $K_0^i$ it follows that
\begin{equation*}
\|\mu_\eps(u)-\pi \delta_{b_i}\|_{W_0^{1,\infty}(B(a_i,r))^*} \leq
\frac{f(\Sigma_\eps,C)}{|\log \eps|}.
\end{equation*}

Combining the above and the upper bound for the energy density
outside the vortex balls finally yields
\eqref{concentrationenergie}.
\end{proof}


\section{Convergence to Lipschitz vortex paths}

In this section, we establish compactness for the Jacobians and the
energy densities under weaker assumptions on the initial excess
energy. Instead of assuming that this excess energy vanishes
initially, we only require that it is uniformly bounded with respect
to $\eps$.

\begin{thm}
\label{theoreme1bis} Let $(a_i^0,d_i)$ with $d_i=\pm 1$ be a
configuration of vortices. Let $R=2^{n_0}$ and $(\ueo)_{0<\eps<1}$
in $[U_d]+H^1(\R)$ such that
\begin{equation}
\lim_{\eps \to 0} \| Ju_\eps - \pi \ssum d_i \delta_{a_i^0}
\|_{W_0^{1,\infty}(B(R))^\ast}=0, \label{WWP1}
\tag{\scriptsize{WP$_1$}}
\end{equation}
\begin{equation}
\sup_{0<\eps<1} E_{\eps}(u_{\eps},A_n)\leq K_0, \qquad \forall n\geq
n_0 \label{WWP2}, \tag{\scriptsize{WP$_2$}}
\end{equation}
and
\begin{equation}
\sup_{0<\eps < 1} \Big( \er(u_\eps)-\er(\uest(a_i,d_i))\Big)\leq
K_1. \label{WWP3}\tag{\scriptsize{WP$_{3'}$}}
\end{equation}
Then there exist $R'=2^{n_1}$ and $T>0$ depending only on
$K_1,R,r_a$ and $R_a$, a sequence $\eps_k \to 0$ and $l$ Lipschitz
paths $b_i : [0,T]\rightarrow \R$ starting from $a_i^0$ such that
\begin{equation}
\label{concentrationjacobien2} \sup_{t\in [0,T]}\|Ju_{\eps_k}(t)-\pi
\sum_{i=1}^l d_i \delta_{b_i(t)}\|_{W_0^{1,\infty}(B(R'))^*}\to
0,\qquad k\to + \infty
\end{equation}
and
\begin{equation}
\label{concentrationenergie2} \sup_{t\in [0,T]}
\|\mu_{\eps_k}(u_{\eps_k})(t)-\pi \sum_{i=1}^l
\delta_{b_i(t)}\|_{W^{1,\infty}(B(R'))^*} \to 0,\qquad k\to +\infty.
\end{equation}
Moreover, there exist a constant $C_0>0$ depending only on $r_a, R,
K_1$ and $K_0$ and a constant $C_1>0$ depending on $r_a, R$ and $
K_1$ such that for all $t\in [0,T]$ and for $k\in \mathbb{N}$,
\begin{equation}
\label{energieanneau}
 E_{\eps_k}(u_{\eps_k}(t), A_n) \leq C_0,\qquad \forall n\geq n_1
\end{equation}
and
\begin{eqnarray}
\label{energieresteO}
 \mathcal{E}_{\eps_k,
[U_d]}\left(u_{\eps_k}(t)\right)-\mathcal{E}_{\eps_k,
[U_d]}\left(u_{\eps_k}^\ast(b_i(t),d_i)\right) \leq C_1.
\end{eqnarray}
\end{thm}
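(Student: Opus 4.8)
The plan is to run a quantitative compactness (Arzelà--Ascoli) argument on the family of Jacobians, using Proposition \ref{evolution} to control their time modulus of continuity, with the spatial concentration coming from Theorem \ref{coercivitytheorem}. First I would observe that the initial bounds \eqref{WWP1}--\eqref{WWP3} together with the energy dissipation identity from Theorem \ref{cauchyproblem} (namely $\frac{d}{dt}\er(u_\eps)=-k_\eps\intr|\dt u_\eps|^2$) propagate: for all $t\geq 0$ one has $\er(u_\eps(t))-\er(\uest(a_i^0,d_i))\leq K_1$ and $\int_0^\infty k_\eps\intr|\dt u_\eps|^2\,dt\leq K_1$. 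Combined with Lemma \ref{almostminimizing}, Lemma \ref{linkenergies} and Proposition \ref{energyonball}, this gives a uniform (in $\eps$ and $t$, on a time interval to be determined) bound for $E_\eps(u_\eps(t),B(R_0))$ by $\pi l\leps+C(K_1)$, hence a uniform bound on the excess energy $\Sigma_\eps(t)$ relative to whichever configuration of vortices we track. The point $n(u_\eps(t))$ is controlled via Lemma \ref{energyatinfinityanddegree} once we know \eqref{WWP2} persists in time (this is exactly the content of \eqref{energieanneau}, which I discuss below), so hypotheses \eqref{c1}--\eqref{c2} of Theorem \ref{coercivitytheorem} will be met as long as the vortices stay separated and inside $B(R_0)$.

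Next I would set up the continuity-in-time estimate. Applying Proposition \ref{evolution} with test functions $\chi$ and $\varphi$ that are affine (with $\nabla\varphi=\nabla^\perp\chi$) near each $a_i^0$ and supported in $B(R)$, the terms $k_\eps^2\intr|\dt u_\eps|^2\varphi$ and $R_\eps(t,\varphi,\chi,u_\eps)$ are estimated in $L^1_t$ by $\delta\int k_\eps\intr|\dt u_\eps|^2\lesssim \delta K_1$ and by Cauchy--Schwarz $\big(\int k_\eps\intr|\dt u_\eps|^2\big)^{1/2}\big(\int k_\eps\intr|\nabla u_\eps|^2\big)^{1/2}\lesssim (K_1)^{1/2}(\delta(l+o(1)))^{1/2}$ respectively, while the Hopf-differential term $2\intr\mathrm{Im}(\omega(u_\eps)\partial_{\bar z}^2\chi)$ is bounded using the coercivity control of $\omega(u_\eps)-\omega(u^*(a_i,d_i))$ and the explicit formula for the limiting term. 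Integrating in time on $[s,t]\subset[0,T]$ yields a bound of the form
\begin{equation*}
\Big|\langle Ju_\eps(t)-Ju_\eps(s),\,\chi\rangle\Big|\leq C(K_1,K_0,r_a,R)\,|t-s|+o_\eps(1),
\end{equation*}
valid as long as the mass of $Ju_\eps$ stays concentrated in the balls where $\chi$ is affine. This is a closed estimate on a time interval $[0,T]$ with $T=T(K_1,R,r_a,R_a)$: as long as the concentration points move at controlled speed, they cannot collide or escape $B(R')$ before time $T$. Passing to a subsequence $\eps_k\to 0$, a diagonal Arzelà--Ascoli argument produces Lipschitz paths $b_i(t)$ with $b_i(0)=a_i^0$ and yields \eqref{concentrationjacobien2}; then \eqref{concentrationenergie} of Theorem \ref{coercivitytheorem}, applied at each time $t$ with configuration $(b_i(t),d_i)$, upgrades this to \eqref{concentrationenergie2} for the energy densities.

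Finally, for \eqref{energieanneau} and \eqref{energieresteO}: on the annuli $A_n$ with $n\geq n_1$ (outside $B(R')$), the mass of $Ju_\eps$ and of $\mu_\eps(u_\eps)$ is negligible, so $u_\eps(t)$ stays in the topological sector $S_d$ there and Lemma \ref{energyonannulii} plus the lower bound Lemma \ref{almostminimizing} pin down $E_\eps(u_\eps(t),A_n)$ up to a constant $C_0$ depending on $K_0$ and $K_1$; the persistence of the degree uses that the total energy $E_\eps(u_\eps(t),A_n)$ is controlled by $E_\eps(u_\eps(t),\R\setminus B(R'))$, which is bounded because $\er(u_\eps(t))\leq\er(\uest(a_i^0,d_i))+K_1$ and the energy inside $B(R')$ is bounded below by Lemma \ref{linkenergies}. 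Inequality \eqref{energieresteO} then follows by writing $\mathcal{E}_{\eps,[U_d]}(u_\eps(t))-\mathcal{E}_{\eps,[U_d]}(\uest(b_i(t),d_i))$ as $\big(\er(u_\eps(t))-\er(\uest(a_i^0,d_i))\big)+\big(\er(\uest(a_i^0,d_i))-\er(\uest(b_i(t),d_i))\big)$; the first bracket is $\leq K_1$ by dissipation, and the second is bounded using Corollary \ref{renormalizedenergy} together with the fact that the $b_i(t)$ remain in a fixed compact region with mutual distances bounded below on $[0,T]$ (so the Kirchhoff--Onsager term $W(b_i(t),d_i)$ stays bounded). The main obstacle is the bootstrap: one must simultaneously control the excess energy, the degree at infinity, and the concentration, since the hypotheses of Theorem \ref{coercivitytheorem} needed at time $t$ presuppose the very concentration and energy bounds one is trying to propagate; this is resolved by a continuity/maximal-time argument fixing $T$ small enough (depending only on $K_1,R,r_a,R_a$) that the vortex speed estimate precludes collision or escape, closing the loop.
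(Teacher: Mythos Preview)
Your overall architecture matches the paper's: a bootstrap/continuity argument supplying the hypotheses of Theorem~\ref{coercivitytheorem} at each time, the evolution formula of Proposition~\ref{evolution} for equicontinuity in time, and a diagonal Arzel\`a--Ascoli extraction for the paths $b_i$. Two quantitative claims, however, do not hold as you state them and would break the argument.

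First, the dissipation identity $\frac{d}{dt}\er(u_\eps)=-k_\eps\intr|\dt u_\eps|^2$ does \emph{not} give $\int_0^\infty k_\eps\intr|\dt u_\eps|^2\le K_1$: you only control $\er(u_\eps^0)$ from above, and a lower bound on $\er(u_\eps(t))$ requires the vortex structure to be intact (lower energy bounds near each vortex via Theorem~3 in \cite{JS1}, together with $n(u_\eps(t))\le n_1$ so that Lemma~\ref{linkenergies} applies). The dissipation bound is therefore itself part of the bootstrap, valid only on $[0,T_\eps]$ and with a constant $C(K_1,r_a,R)$ rather than $K_1$; this is exactly how the paper organises it in Lemma~\ref{lemma : dissipation}. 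Second, your estimates for the two error terms in the evolution formula come out $O(1)$ rather than $o_\eps(1)$. For $\int_s^t k_\eps^2\intr|\dt u_\eps|^2\varphi$ you dropped a factor $k_\eps=\delta/\leps$: the correct bound is $k_\eps\cdot\int_0^T k_\eps\intr|\dt u_\eps|^2\le Ck_\eps\to 0$. For $R_\eps$, your Cauchy--Schwarz splitting $(\int k_\eps\intr|\dt u_\eps|^2)^{1/2}(\int k_\eps\intr|\nabla u_\eps|^2)^{1/2}$ with the gradient taken on the full ball (so that $\intr|\nabla u_\eps|^2\sim\pi l\leps$) yields only an $O(1)$ bound. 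The paper instead exploits that $\nabla\varphi-\nabla^\perp\chi$ is supported on the annuli $\cup_i\big(B(a_i^0,\tfrac{3r_a}{2})\setminus B(a_i^0,r_a)\big)$, where the coercivity estimate \eqref{coercivity} gives $\int|\nabla u_\eps|^2\le C$ uniformly in $\eps$; putting both factors of $1/\leps$ on the time-derivative term then gives $\int_0^T|R_\eps|\le C\leps^{-1/2}$. Without these $o_\eps(1)$ rates the inequality $|\langle Ju_\eps(t)-Ju_\eps(s),\chi\rangle|\le C|t-s|+o_\eps(1)$ fails and the compactness step collapses.
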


\begin{proof}
The proof is very similar to the proof of Theorem 4 in \cite{BJS}.
In the sequel, $C$ will stand for a constant depending only on $r_a,
R, R_a$ and $K_1$.

\medskip

We first consider $\Lambda>K_0$. Thanks to Lemma
\ref{energyatinfinityanddegree} and \eqref{WWP2}, there exists
$\eps_\Lambda>0$ such that for all $\eps<\eps_\Lambda$, we have
$n(u_\eps^0)=n(u_\eps^0,\Lambda)\leq n_0$. We fix such a $\Lambda$
and from now on only consider $\eps<\eps_\Lambda$.

\medskip

We next introduce $R'=\max(R,R_a+r_a)$ and define $n_1 \geq n_0$ as
the smallest integer for which $2^{n_1} \leq R'$. In the remainder
of the proof, we will assume without loss of generality that
$R'=2^{n_1}$ and we will write $\|\cdot \|$ instead of $\|\cdot \|
_{W_0^{1,\infty}(B(R'))^*}$.
 Our aim is to apply Theorem \ref{coercivitytheorem} to each $u_\eps(t)$ for the choice $r=r_a$ and $R_0=R'$.
   Let $\eta_0$ and $\eps_0$ be the constants provided
by Theorem \ref{coercivitytheorem} for this choice. First, thanks to
\eqref{WWP2} and \eqref{WWP3} it turns out that the convergence in
\eqref{WWP1} still holds on the larger ball $B(R')$ (see the proof
of Lemma 7.3 in \cite{BJS}). Therefore, since $t\mapsto
Ju_\eps(t)\in L^1(B(R'))$ is continuous for each $\eps$, there
exists a time $T_\eps>0$ such that
\begin{equation}
\label{jac} \|Ju_{\eps}(s)-\pi \ssum d_i
\delta_{a_i^0}\|<\eta_0,\qquad \forall s\in [0,T_\eps).
\end{equation}
We take $T_\eps$ to be the maximum time smaller than $T^\ast$ having
this property, where $T^\ast$ is defined in Theorem \ref{theorem1}.

On the other hand, since $t\mapsto E_\eps(u_\eps(t),A_n)$ is
continuous uniformly with respect to $n$ and $\Lambda>K_0$, we infer
from \eqref{WWP2} that there exists $T'_\eps>0$ such that for
$s\in[0,T'_\eps]$
\begin{equation*}
E_\eps(u_\eps(s),A_n)<\Lambda,\quad \forall n\geq n_1,
\end{equation*}
so according to Lemma \ref{energyatinfinityanddegree} we have
$n(u_\eps(s))\leq n_1$ for $s\in [0,T'_\eps]$.

We claim that there exists a constant $D$ depending on $K_1$, $r_a$,
$R$ and $K_0$ such that for all $s\in [0,\min(T_\eps,T'_\eps))$,
\begin{equation}
\label{eb} E_\eps(u_\eps(s),A_n) \leq D,\qquad \forall n\geq n_1.
\end{equation}

Consequently, if we assume from the beginning that
\begin{equation*}
\Lambda> \max(K_0,D),
\end{equation*}
then it follows from Lemma \ref{energyatinfinityanddegree} that
$n(u_\eps(s)) \leq n_1$ on $[0,\min(T_\eps,T'_\eps)]$.
  Therefore $T'_\eps > T_\eps$ and the topological degrees of the maps $u_\eps(t)$ at infinity remain uniformly
  bounded by $n_1$ as long as their Jacobians satisfy \eqref{jac}.

  \medskip

\noindent \emph{Proof of \eqref{eb}.} As in \cite{BJS}, we decompose
for each $n\geq n_1$
$E_\eps(u_\eps(t),A_n)-E_\eps(\uest(\aio,d_i),A_n)$ as
\begin{equation*}
\begin{split}
\sum_{\substack{k=n_1\\ k\neq n}}^{+ \infty}& \left(
E_\eps(\uest(\aio,d_i),A_k)-E_\eps(u_\eps(t),A_k)\right)\\
&+E_\eps\left(\uest(\aio,d_i),B(R')\right)-E_\eps\left(u_\eps(s),B(R')\right)\\
&+\er\left(u_\eps(s)\right)-\er\left(\uest(a_i^0,d_i)\right).
\end{split}
\end{equation*}

We first handle each term of the sum in the right-hand side. In view
of Lemmas \ref{almostminimizing} and \ref{energyonannulii}, we have
for $k\geq n_1$
\begin{equation*}
\begin{split}
E_\eps(u_\eps(t),A_k)&\geq -C\eps^2 2^{-2k} +\int_{A_k}
\frac{|\nabla U_d|^2}{2}\\
&\geq  E_\eps(\uest(\aio,d_i),A_k)-C(R_a)2^{-k}-C\eps^2 2^{-2k},
\end{split}
\end{equation*}
so we deduce that
\begin{equation*}
\sum_{\substack{k=n_1\\ k\neq n}}^{+\infty} \big(
E_\eps(\uest(\aio,d_i),A_k)-E_\eps(u_\eps(t),A_k)\big)\leq C.
\end{equation*}
Next, we infer from the definition of $T_\eps$ and Theorem 3 in
\cite{JS1}  that $\int_{B(a_i^0,r_a)} e_\eps(u_\eps(s)) \geq \pi
\leps -C$. Observe that $R'$ is chosen so that $\cup B(a_i^0,r_a)
\subset B(R')$, so this leads to
\begin{eqnarray*}
E_\eps\left(u_\eps(s),B(R')\right)\geq \pi l \leps -C.
\end{eqnarray*}
Using Proposition \ref{energyonball}, we thus find
\begin{eqnarray}
\label{upperbound3}
 E_\eps\left(\uest(\aio,d_i),B(R')\right)-E_\eps\left(u_\eps(s),B(R')\right)\leq C.
\end{eqnarray}
Finally, we define
$\Sigma_\eps^0(s):=\er\left(u_\eps(s)\right)-\er\left(\uest(a_i^0,d_i)\right)$.
Since $\er\left(u_\eps(t)\right)$ is non-increasing, we obtain in
view of \eqref{WWP3}
\begin{eqnarray*}
 \Sigma_\eps^0(s)\leq
\er(u_\eps^0)-\er\left(\uest(a_i^0,d_i)\right)\leq K_1,
\end{eqnarray*}
and \eqref{eb} follows.

\medskip


We may now apply Theorem \ref{coercivitytheorem} to each $u_\eps(t)$
on $[0,T_\eps]$. This provides points $\dsp b_i^{\eps}(s)\in
B(a_i^0,\frac{r_a}{2})$ for $0\leq s\leq T_\eps$. Since
$\Sigma_\eps^0(s)\leq K_1$, estimate \eqref{coercivity} turns into
\begin{eqnarray*}
\int_{\Omega_{R',r_a}} e_\eps(|u_\eps(s)|)+\frac{1}{8}
\left|\frac{j(u_\eps(s))}{|u_\eps(s)|} -j(u^*(a_i^0,d_i))\right|^2
\leq C,
\end{eqnarray*}
where $\Omega_{R',r_a}=B(R')\setminus\cup B(a_i^0,r_a)$. Also, we
have by \eqref{omega} and \eqref{energiesmodules}
\begin{eqnarray}
\label{upperbound2} \int_{\Omega_{R',r_a}} e_\eps(u_\eps(s)) \leq C
\end{eqnarray}
and
\begin{eqnarray}
\label{upperboundomega} \|
\omega(u_\eps(s))\|_{L^1(\Omega_{R',r_a})} \leq C,
\end{eqnarray}
where $C=C(R,r_a,K_1)$. For notation convenience, we may now write
$\mu_\eps$ instead of $\mu_\eps(u_\eps)$.

\medskip

In the sequel, given any configuration $(a_i,d_i)$, we denote by
$\mathcal{H}(a_i)$ the set of functions $\chi, \varphi \in
\mathcal{D}(\R)$ such that
\begin{eqnarray*}
\chi=\sum_{i=1}^l \chi_i,\qquad \varphi=\sum_{i=1}^l \varphi_i,
\end{eqnarray*}
where for all $i$
\begin{eqnarray*}
\chi_i,\varphi_i \in \mathcal{D}\Big(B(a_i,\frac{3r_a}{2})\Big),
\qquad \nabla \varphi_i=\nabla^{\bot} \chi_i\: \: \textrm{on }\:
B(a_i,r_a)
\end{eqnarray*}
and $\chi_i$ (hence $\varphi_i$) is affine on $B(a_i,r_a)$ with $
|\nabla \chi_i(a_i)|=|\nabla \varphi_i(a_i)|\leq 1$.

By definition of $r_a$ such functions $\chi$ and $\varphi$ always
exist, and we can moreover estimate their $L^{\infty}$ norms by
\begin{equation*}
\|D\varphi\|_{\infty},\|D\chi\|_{\infty}\leq \frac{C}{r_a}, \qquad
\|D^2\varphi\|_{\infty},\|D^2\chi\|_{\infty}\leq \frac{C}{r_a^2}.
\end{equation*}

We next establish a control of the remainder terms appearing in
Proposition \ref{evolution}.

\begin{lemme}
Assume that $\displaystyle \sup_{0<\eps <1} T_\eps=T_*$ is finite.
Then there exists a constant $C=C(r_a,R,K_1,T_\ast)$ such that
 \label{lemma : dissipation}
\begin{equation*}
\label{energydissipation} \int_0^{T_\eps} \intr \frac{|\dt
u_\eps|^2}{\leps^2} \, ds \leq \frac{C}{\leps}
\end{equation*}
and for all $\chi, \varphi \in \mathcal{H}(a_i^0)$
\begin{equation*}
\label{reste} \left| \int_0 ^{T_\eps} \intr (\nabla ^{\bot} \chi
-\nabla \varphi)\cdot \frac{\dt u_\eps \cdot \nabla
u_\eps}{\leps}\,ds \right|\leq \frac{ C}{\leps ^{\frac{1}{2}}}.
\end{equation*}
\end{lemme}
\begin{proof}
 In order to prove the first inequality, we use Theorem \ref{cauchyproblem} and obtain
\begin{equation*}
\begin{split}
\frac{\delta}{\leps} \int_0^{T_\eps} \intr |\dt
u_\eps|^2=&\er\left(u_\eps^0\right)-\er\left(u_\eps(T_\eps)\right)\\
\leq &K_1+ \er\left(\uest(a_i^0,d_i)\right)-\er\left(u_\eps
(T_\eps)\right).
\end{split}
\end{equation*}
 Since $n(u_\eps(T_\eps))\leq
n_1$ we have by Lemma \ref{linkenergies}
\begin{equation*}
\begin{split}
\er\left(\uest(a_i^0,di)\right)-&\er\left(u_\eps(T_\eps)\right)
\\&\leq \int_{B(R')} e_\eps\left(\uest(a_i^0,d_i)\right)-\int_{B(R')}
e_\eps\left(u_\eps(T_\eps)\right) +\frac{C}{R'}
\end{split}
\end{equation*}
which is bounded in view of \eqref{upperbound3}. It then suffices to
divide all terms by $\leps$.

\medskip

For the second assertion, we set  $\xi=\nabla^{\bot} \chi-\nabla
\varphi$ which has compact support in $A=\cup A_i$, where $A_i=
B(a_i^0,\frac{3r_a}{2})\setminus B(a_i^0,r_a)$, and we apply
Cauchy-Schwarz inequality. We obtain
\begin{equation*}
\begin{split}
\left( \int_0 ^{T_\eps} \intr (\nabla ^{\bot} \chi -\nabla
\varphi)\cdot \frac{\dt u_\eps \cdot \nabla u_\eps}{\leps}\right)^2&
\\
\leq \left( \int_0 ^{T_\eps} \intr \frac {|\dt
u_\eps|^2}{\leps^2}\right)&\cdot \left( \int_0 ^{T_\eps} \int_{A}
|\nabla u_\eps|^2 |\xi|^2\right).
\end{split}
\end{equation*}
Since $A \subset \Omega_{R',r_a}$, we infer from \eqref{upperbound2}
\begin{equation*}
  \int_0 ^{T_\eps} \int_{A} |\nabla u_\eps|^2 |\xi|^2\leq  \|\xi\|_{\infty}^2 \int_0^{T_\eps} \int_{A}
|\nabla u_\eps|^2 \leq CT_*\|\xi\|_{\infty}^2,
\end{equation*}
and the conclusion finally follows from the first part of the proof.
\end{proof}

We may now establish the following

\begin{lemme}
\label{liminfteps} There exists $T=T(r_a,R_a,R,K_1)>0$ such that
\begin{eqnarray*}
\liminf_{\eps \to 0} T_\eps \geq T.
\end{eqnarray*}
\end{lemme}

\begin{proof}

������������������������������
The first step consists in showing that for $(\chi,\varphi)\in
\mathcal{H}(a_i^0)$, for $s,t\in[0,T_\eps]$ and $i=1,\ldots,l$ we
have
\begin{equation}
\label{step1}
\begin{split}
\big| \langle \chi_i,Ju_\eps(t)-Ju_\eps(s) \rangle + \delta \langle
\varphi_i,\mu_\eps(t)&-\mu_\eps(s) \rangle\big| \\
&\leq C |t-s| +\frac{C}{\leps^{\frac{1}{2}}}.
\end{split}
\end{equation}
Indeed, we fix $i$ and we invoke Proposition \ref{evolution} for
$u\equiv u_\eps$ and the choice of
 test functions $(\chi_i,\varphi_i)$. Integrating \eqref{formuleevolution} on $[s,t]$ yields
\begin{equation*}
\begin{split}
| \langle \chi_i,Ju_\eps(t)-&Ju_\eps(s) \rangle + \delta \langle
\varphi_i,\mu_\eps(t)-\mu_\eps(s) \rangle| \leq 2\int_s^t \int
\Big|\textrm{Im} \left(\omega(u_\eps)\frac{\partial^2
\chi_i}{\partial
\overline{z}^2}\right)\Big|\\
&+\int_s^t \int \left|\frac{|\dt u_\eps|^2}{\leps^2}\varphi_i
+(\nabla^{\bot} \chi_i -\nabla \varphi_i)\cdot \frac{\dt u_\eps
\cdot \nabla u_\eps}{\leps}\right|,
\end{split}
\end{equation*}
where $\frac{\partial^2 \chi_i}{\partial \overline{z}^2}$ has
support in $C_i \subset \Omega_{R',r_a}$, and it finally suffices to
use \eqref{upperboundomega} and Lemma \ref{lemma : dissipation}.

\medskip

In a second step, we take advantage of the equality $$
\|Ju_{\eps}(T_\eps)-\pi \ssum d_i \delta_{a_i^0}\|\equiv \eta_0.$$
We set
\begin{eqnarray*}
\nu_{i,\eps}=d_i
\frac{b_i^\eps(T_\eps)-a_i^0}{|b_i^\eps(T_\eps)-a_i^0|},\qquad
i=1,\ldots,l
\end{eqnarray*}
and we define $\chi_{i,\eps}$, $\varphi_{i,\eps}$ so that for $x\in
B(a_i^0,r_a)$,
\begin{eqnarray*} \chi_{i,\eps}(x)=\nu_{i,\eps}\cdot x,\qquad
\varphi_{i,\eps}(x)= \nu^{\bot}_{i,\eps}\cdot x,
\end{eqnarray*}
and we require additionally that $\chi=\sum \chi_{i,\eps}$ and
$\varphi=\sum \varphi_{i,\eps}$ belong to $\mathcal{H}(a_i^0)$; we
can moreover choose $\varphi_{i,\eps}$ and $\chi_{i,\eps}$ so that
their norms in $C^2(B(R))$ remain bounded uniformly in $\eps$. As
$b_i^\eps(T_\eps) \in B(a_i^0,r_a/2)$, we have
\begin{eqnarray*}
|d_i| |b_i^\eps(T_\eps)-a_i^0|=d_i\chi
(b_i^\eps(T_\eps)-a_i^0)+\delta \varphi(b_i^\eps(T_\eps)-a_i^0),
\end{eqnarray*}
so that
\begin{eqnarray*}
\| \pi \sum_{i=1}^l d_i (\delta_{b_i^{\eps}(T_\eps)}-\delta_{\aio})
\|=\langle \pi \sum_{i=1}^l d_i
(\delta_{b_i^{\eps}(T_\eps)}-\delta_{\aio}) , \chi\rangle +\delta
\langle \pi \sum_{i=1}^l (
\delta_{b_i^{\eps}(T_\eps)}-\delta_{\aio}), \varphi\rangle.
\end{eqnarray*}
On the other hand, we have
\begin{eqnarray*}
\|Ju_\eps(T_\eps)-\pi \ssum d_i \delta_{\aio}\| \leq \|
Ju_\eps(T_\eps)-\pi \ssum {d_i \delta}_{b_i^{\eps}(T_\eps)} \|+\|\pi
\ssum d_i( \delta_{b_i^{\eps}(T_\eps)}-\delta_{\aio})\|.
\end{eqnarray*}
The second term in the right-hand side may be rewritten as
\begin{eqnarray*}
\langle \pi \ssum d_i (\delta_{b_i^{\eps}(T_\eps)}-\delta_{\aio}) ,
\chi\rangle +\delta \langle \pi \ssum (
\delta_{b_i^{\eps}(T_\eps)}-\delta_{\aio}), \varphi\rangle=A+B+C,
\end{eqnarray*}
where
\begin{eqnarray*}
A&=&\langle \pi \ssum d_i
\delta_{b_i^{\eps}(T_\eps)}-Ju_\eps(T_\eps),\chi\rangle + \delta
\langle \pi \ssum
\delta_{b_i^{\eps}(T_\eps)}-\mu_\eps(T_\eps),\varphi\rangle \\
&\leq&C\left(
 \|
Ju_\eps(T_\eps)-\ssum d_i \delta_{b_i^\eps(T_\eps)}\|+\delta  \|
\mu_\eps(T_\eps)-\ssum \delta_{b_i^\eps(T_\eps)}\|\right),
\end{eqnarray*}
$B$ is given by
\begin{eqnarray*}
B=\langle Ju_\eps(T_\eps)-Ju_\eps(0),\chi\rangle+ \delta \langle
\mu_\eps(T_\eps)-\mu_\eps(0),\varphi\rangle
\end{eqnarray*}
and finally
\begin{eqnarray*}
C&=&\langle Ju_\eps^0 - \pi \ssum d_i \delta_{\aio},\chi\rangle +
\delta \langle \mu_\eps(u_\eps^0)-\pi \ssum \delta_{\aio},\varphi
\rangle \ \\
&\leq& C\left(
 \|
Ju_\eps^0-\ssum d_i \delta_{\aio}\|+\delta  \|
\mu_\eps(u_\eps^0)-\ssum \delta_{\aio}\|\right).
\end{eqnarray*}
In view of the bound provided by \eqref{step1} for $B$, estimates
\eqref{concentrationjacobien}- \eqref{concentrationenergie} and the
fact that $\Sigma_\eps^0(s)\leq K_1$ for $0\leq s\leq T_\eps$, this
implies
\begin{equation*}
\begin{split}
\eta_0 =\|Ju_\eps(T_\eps)-\pi \ssum d_i \delta_{\aio}\|\leq C( \eps
\leps + \leps^{-1}+\leps^{-\frac{1}{2}})+C T_\eps,
\end{split}
\end{equation*}
and letting $\eps \to 0$ yields the conclusion. Lemma
\ref{liminfteps} is proved.
\end{proof}

\noindent \textbf{{Proof of Theorem \ref{theoreme1bis} completed.}}
\\  We consider $t,s \in [0,T]$. Arguing as in the proof of Lemma \ref{liminfteps} (with $T_\eps$
and $0$ replaced by $t$ and $s$), we find that for all
$\chi,\varphi$ belonging to $\mathcal{H}(\aio)$
\begin{equation*}
\begin{split}
\Big| \ssum d_i \Big[ \chi &(b_i^\eps(t))- \chi
(b_i^{\eps}(s))\Big]+
\delta \Big[\varphi (b_i^{\eps}(t))-\varphi (b_i^{\eps}(s))\Big]\Big| \\
&\leq C \sup_{\tau \in [0,T]}\Big( \| Ju_\eps(\tau)-\ssum d_i
\delta_{b_i^\eps(\tau)}\|+\delta  \| \mu_\eps(\tau)-\ssum
\delta_{b_i^\eps(\tau)}\|\Big)\\
&+\big|\langle Ju_\eps(t)-Ju_\eps(s),\chi\rangle+ \delta \langle
\mu_\eps(t)-\mu_\eps(s),\varphi\rangle\big|,
\end{split}
\end{equation*}
which is bounded by $ o_\eps(1)+ c|t-s|$ by
\eqref{concentrationjacobien}-\eqref{concentrationenergie} and
\eqref{step1}. Considering successively $\chi(x)=e_1\cdot x$ and
$\chi(x)=e_2\cdot x$ on each $B(\aio,r_a)$, we obtain
\begin{equation}
\label{lipschitz} |b_i^\eps(t)-b_i^\eps(s)| \leq c |t-s|+o_\eps(1).
\end{equation}

 Next, using that $b_i^\eps \in
B(\aio,r_a)$ and a standard diagonal argument, we may construct a
sequence $(\eps_k)\to 0$ and paths $b_i(t)$ such that
$b_i^{\eps_k}(t)$ converges to $b_i(t)$ for all $t\in \mathbb{Q}\cap
[0,T]$. We infer then from
\eqref{concentrationjacobien}-\eqref{concentrationenergie} that the
convergence statements
\eqref{concentrationjacobien2}-\eqref{concentrationenergie2} in
Theorem \ref{theoreme1bis} hold for these times. Moreover, in view
of \eqref{lipschitz} these paths are Lipschitz on $[0,T]\cap
\mathbb{Q}$, so that they can be extended in a unique way to
Lipschitz paths (still denoted by $b_i(t)$) on the whole of $[0,T]$.
We can finally establish that the convergence
\eqref{concentrationjacobien2}-\eqref{concentrationenergie2} holds
uniformly with respect to $t\in [0,T]$ by using again
\eqref{lipschitz} and
\eqref{concentrationjacobien}-\eqref{concentrationenergie}.

\medskip

Finally, we already know from \eqref{eb} that estimate
\eqref{energieanneau} holds for the full family
$(u_\eps)_{\eps<\eps_{\Lambda}}$. In order to show
\eqref{energieresteO}, we recall first the uniform bound $\er
\left(u_\eps(t)\right)-\er \left(\uest (\aio,d_i)\right) \leq K_1. $
On the other hand, Corollary \ref{renormalizedenergy} gives
\begin{eqnarray*}
\er \left(\uest (\aio,d_i)\right)-\er \left(\uest
(b_i(t),d_i)\right)= W(\aio,d_i)-W(b_i(t),d_i)\leq C,
\end{eqnarray*}
since the $b_i's$ are continuous and remain separated on $[0,T]$.
This yields the bound \eqref{energieresteO} and concludes the proof
of Theorem \ref{theoreme1bis}.
\end{proof}

As mentioned in the beginning of the proof of Theorem
\ref{theoreme1bis},
 the convergence of the initial data in \eqref{WWP1} actually holds on every large ball
 $B(L),L=2^n \geq R$, so
  that we find the same conclusions when replacing $R$ by $L$.

\begin{lemme}[\cite{BJS}, Lemma 7.3]
\label{largerballs} There exists a subsequence, still denoted by
$\eps_k$, such that for all $L\geq 2^{n_1}$,
\begin{eqnarray*}
\eta_k:=\sup_{[0,T]} \| Ju_{\eps_k}(t) - \pi \ssum d_i
\delta_{b_i(t)}\|_{W_0^{1,\infty}(B(L))^*} \to 0,\qquad k\to
+\infty.
\end{eqnarray*}
\end{lemme}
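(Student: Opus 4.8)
The plan is to bootstrap from the conclusion already in hand on $B(R')$ to arbitrary larger balls $B(L)$, $L = 2^n$, by re-running the argument of Theorem \ref{theoreme1bis} with $R'$ replaced by $L$, while being careful that the time $T$ does not shrink. The first step is to observe, exactly as at the beginning of the proof of Theorem \ref{theoreme1bis} (the reference there is to the proof of Lemma 7.3 in \cite{BJS}), that the initial convergence \eqref{WWP1} in fact holds on every ball $B(L)$ with $L \geq R$: the hypotheses \eqref{WWP2} and \eqref{WWP3} force the excess energy on each annulus $A_n$, $n \geq n_1$, to be controlled, so the defect of $Ju_\eps^0$ from $\pi\sum d_i\delta_{a_i^0}$ cannot be supported near infinity. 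Concretely one uses Lemma \ref{almostminimizing} and Lemma \ref{energyonannulii} together with \eqref{WWP2} to see that $E_\eps(u_\eps^0, A_n)$ is close to $\int_{A_n}|\nabla U_d|^2/2$ up to a summable error, hence the Jacobian carries no mass on $B(L)\setminus B(R')$ in the limit; combining with \eqref{WWP1} on $B(R')$ gives the claim.

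Next I would fix $L = 2^n \geq 2^{n_1}$ and redo the proof of Theorem \ref{theoreme1bis} with $R_0 = L$ in the application of Theorem \ref{coercivitytheorem}. The annular energy bound \eqref{eb}, established there by the telescoping-sum decomposition into contributions from $A_k$ ($k \neq n$), from $B(R')$, and from the renormalized energy excess, goes through verbatim with $R'$ replaced by $L$: the sum over annuli is still bounded by $C$ via Lemmas \ref{almostminimizing} and \ref{energyonannulii}, the ball term by Proposition \ref{energyonball}, and the excess term by \eqref{WWP3} and monotonicity of $\er(u_\eps(t))$. This gives uniform degree control $n(u_\eps(t)) \leq n$ on a time interval, and the coercivity estimates \eqref{concentrationjacobien}--\eqref{concentrationenergie} then hold with $B(R_0) = B(L)$. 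The key point is that the exit time $T_\eps^{(L)}$ for the Jacobian to leave the $\eta_0$-neighborhood of $\pi\sum d_i\delta_{a_i^0}$ on $B(L)$ satisfies $\liminf_\eps T_\eps^{(L)} \geq T$ with the \emph{same} $T = T(r_a, R_a, R, K_1)$: the argument of Lemma \ref{liminfteps} only used test functions $(\chi,\varphi) \in \mathcal{H}(a_i^0)$ supported near the vortices and the dissipation estimates of Lemma \ref{lemma : dissipation}, none of which degrade when the reference ball is enlarged, so no loss in $T$ occurs.

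The main obstacle is a bookkeeping one rather than a conceptual one: ensuring the subsequence can be chosen independently of $L$. For each $L = 2^n$ the above produces a subsequence along which the convergence holds on $B(L)$; I would apply a diagonal extraction over the countable family of scales $L = 2^{n_1}, 2^{n_1+1}, \ldots$ to obtain a single subsequence, still written $\eps_k$, valid for all $L$. One must also check that the limiting paths $b_i(t)$ obtained at scale $L$ agree with those from Theorem \ref{theoreme1bis}, which is immediate since they are determined by the limit of $b_i^{\eps_k}(t) \in B(a_i^0, r_a/2)$, a quantity that does not depend on $L$. Finally, uniformity in $t \in [0,T]$ of the convergence $\eta_k \to 0$ follows, as in the last part of the proof of Theorem \ref{theoreme1bis}, from the Lipschitz bound \eqref{lipschitz} on the $b_i^{\eps_k}$ together with \eqref{concentrationjacobien}--\eqref{concentrationenergie} applied on $B(L)$.
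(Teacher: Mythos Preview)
Your proposal is correct and follows exactly the approach the paper indicates: the paper does not give a self-contained proof of this lemma but simply remarks (just before the lemma statement) that the convergence of the initial data in \eqref{WWP1} holds on every large ball $B(L)$, $L=2^n\geq R$, so that one finds the same conclusions when replacing $R$ by $L$, and cites \cite{BJS}, Lemma 7.3 for the details. Your write-up supplies precisely those details---the bootstrap to larger balls, the observation that $T$ is independent of $L$, and the diagonal extraction over dyadic scales---in agreement with that outline.
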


For $t\in [0,T]$ and sufficiently large $k\in \mathbb{N}$, we may
therefore apply
 Theorem \ref{coercivitytheorem} to $u_{\eps_k}(t)$ with respect to the configuration $(b_i(t),d_i)$
  and with the choice $R_0=L=2^n$ for each $n\geq n_1$. We are led
  to
  introduce the excess energy at time $t$ with respect to the configuration
$(b_i(t),d_i)$ by
\begin{equation*}
\Sigma_{\eps_k}(t)=\mathcal{E}_{\eps_k,U_d}\left(u_{\eps_k}(t)\right)-\mathcal{E}_{\eps_k,U_d}\left(u_{\eps_k}^\ast(b_i(t),d_i)\right),
\end{equation*}
which is uniformly bounded on $[0,T]$ in view of
\eqref{energieresteO}. Letting first $k$, then $n$ tend to
$+\infty$, we can get rid of the dependance on $R$ in
\eqref{coercivity}.

\begin{lemme}
\label{coercivitylemma}
 For all $r \leq r_a/2$ and $K\geq 2^{n_1}$, we
have for sufficiently large $k$ and $t,t_1,t_2 \in [0,T]$
\begin{equation*}
\begin{split}
\int_{B(K)\setminus \cup
B(b_i(t),r)}e_{\eps_k}(|u_{\eps_k}(t)|)+\frac{1}{8}
\Big|\frac{j(u_{\eps_k}(t))}{|u_{\eps_k}(t)|}
&-j(u^*(b_i(t),d_i))\Big|^2 \\
&\leq \Sigma_{{\eps}_k}(t)+C(\eps_k,\eta_k,\frac{1}{K}).
\end{split}
\end{equation*}
Therefore, we have as $k\to + \infty$
\begin{equation*}
\begin{split}
\limsup_{k\to +\infty}\int_{t_1}^{t_2} \int_{B(K)\setminus \cup
B(b_i(t),r)}e_{\eps_k}(|u_{\eps_k}(t)|)&+\frac{1}{8}
\Big|\frac{j(u_{\eps_k})(t)}{|u_{\eps_k}(t)|}
-j(u^*(b_i(t),d_i))\Big|^2 \\
&\leq \limsup_{k \to + \infty}\int_{t_1}^{t_2}\Sigma_{{\eps}_k}(t).
\end{split}
\end{equation*}
\end{lemme}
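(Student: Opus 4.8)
The plan is to apply Theorem~\ref{coercivitytheorem} to each $u_{\eps_k}(t)$, now with respect to the moving configuration $(b_i(t),d_i)$ and with $R_0$ equal to a power of $2$ no smaller than $K$, and then to let $k\to+\infty$ \emph{first} and only afterwards send $R_0\to+\infty$, so that the $1/R_0$ term in \eqref{coercivity} gets absorbed. The bulk of the work is checking that the hypotheses of Theorem~\ref{coercivitytheorem} hold with constants uniform in $t\in[0,T]$.

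So, fix $r\le r_a/2$ and $K\ge 2^{n_1}$, and let $m\ge n_1$ be an integer with $2^m\ge K$. Hypothesis \eqref{c1} for $u_{\eps_k}(t)$ on $B(2^m)$ relative to $(b_i(t),d_i)$ is precisely the content of Lemma~\ref{largerballs}: the quantity $\eta_k=\sup_{[0,T]}\|Ju_{\eps_k}(t)-\pi\ssum d_i\delta_{b_i(t)}\|_{W_0^{1,\infty}(B(2^m))^*}$ tends to $0$, so it is $\le\eta_0$ for $k$ large. Hypothesis \eqref{c2}, namely $2^{n(u_{\eps_k}(t))}\le 2^m$, follows from $n(u_{\eps_k}(t))\le n_1$ for every $t\in[0,T]$, a fact established inside the proof of Theorem~\ref{theoreme1bis} (and resting on $\liminf_{\eps\to0} T_\eps\ge T$ from Lemma~\ref{liminfteps}). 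Finally, since $b_i(t)\in B(a_i^0,r_a/2)$, the points $b_i(t)$ remain mutually separated by at least $7r_a$, so the analogue of $r_a$ for the configuration $(b_i(t),d_i)$ exceeds $r_a/2\ge r$ and $\cup B(b_i(t),r)\subset\cup B(a_i^0,r_a)\subset B(2^{n_1})\subset B(2^m)$; all these geometric quantities being comparable to $r_a$ and $R_a$ uniformly in $t$, the constants $\eta_0,\eps_0$ supplied by Theorem~\ref{coercivitytheorem} can be chosen independent of $t$.

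Granting the hypotheses, estimate \eqref{coercivity} reads, for $k$ large and every $t\in[0,T]$,
\[
\int_{B(2^m)\setminus\cup B(b_i(t),r)}e_{\eps_k}(|u_{\eps_k}(t)|)+\frac{1}{8}\Big|\frac{j(u_{\eps_k}(t))}{|u_{\eps_k}(t)|}-j(u^*(b_i(t),d_i))\Big|^2\le\Sigma_{\eps_k}(t)+C(\eta_k,\eps_k,2^{-m}),
\]
where $\Sigma_{\eps_k}(t)$ is the excess energy of the configuration $(b_i(t),d_i)$ introduced just before the statement. Since the integrand is nonnegative and $B(K)\subset B(2^m)$, discarding the annular region $B(2^m)\setminus B(K)$ yields the first displayed inequality of the lemma (with $1/K$ in place of $2^{-m}$ after relabelling the function $C$). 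For the second assertion I would integrate the displayed estimate over $t\in[t_1,t_2]$ — the integrand being measurable in $t$ because $u_{\eps_k}(\cdot)$ and the $b_i(\cdot)$ are continuous — which produces a factor $(t_2-t_1)\,C(\eta_k,\eps_k,2^{-m})$ on the right; letting $k\to+\infty$ replaces it by $(t_2-t_1)\,C(0,0,2^{-m})$, using $\eps_k\to0$, $\eta_k\to0$ and the continuity of $C$; and since $m$ is an arbitrary integer with $2^m\ge K$, letting $m\to+\infty$ and invoking $C(0,0,0)=0$ eliminates it, which is the claimed bound.

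I expect the only genuinely delicate point to be the order of the two limits — the radius $2^m$ must be sent to infinity \emph{after} $k$, never before — together with the fact that the degree hypothesis \eqref{c2} is available on the whole interval $[0,T]$, which is exactly what the careful choice of $T$ in Lemma~\ref{liminfteps} secures. Everything else is a direct appeal to Theorem~\ref{coercivitytheorem} and Lemma~\ref{largerballs}.
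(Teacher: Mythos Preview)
Your proposal is correct and follows exactly the approach sketched in the paper: apply Theorem~\ref{coercivitytheorem} to $u_{\eps_k}(t)$ relative to the moving configuration $(b_i(t),d_i)$ with $R_0=2^m\ge K$ (hypothesis~\eqref{c1} from Lemma~\ref{largerballs}, hypothesis~\eqref{c2} from the bound $n(u_{\eps_k}(t))\le n_1$ secured in the proof of Theorem~\ref{theoreme1bis}), then send $k\to+\infty$ first and $m\to+\infty$ afterwards. Your filling-in of the geometric verifications (uniform separation of the $b_i(t)$, the constants $\eta_0,\eps_0$ independent of $t$) and your emphasis on the order of the two limits are precisely the points the paper leaves implicit in the sentence ``Letting first $k$, then $n$ tend to $+\infty$, we can get rid of the dependance on $R$ in \eqref{coercivity}.''
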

Consequently, it appears that the distance between $u_{\eps_k}(t)$
and $\ust(b_i(t),d_i)$ may be asymptotically entirely controlled by
$\limsup \Sigma_{\eps_k}(t)$.

\medskip

We now define the trajectory set
\begin{equation*}
\mathcal{T}=\{ (t,b_i(t)),\: t\in [0,T],\: i=1,\ldots,l\}
\end{equation*}
and
\begin{equation*}
\mathcal{G}=[0,T]\times \R \setminus \mathcal{T}.
\end{equation*}
Thanks to the uniform bounds in $L_\loc^2(\mathcal{G})$ provided by
Lemma \ref{coercivitylemma}, we establish the following
\begin{prop}
\label{cvaway} There exists a subsequence, still denoted $\eps_k$,
such that
\begin{equation*}
\frac{j(u_{\eps_k})}{|u_{\eps_k}|} \rightharpoonup j(\ust
(b_i(\cdot),d_i))
\end{equation*}
weakly in $L_{\loc}^2(\mathcal{G})$ as $k\to + \infty$.
\end{prop}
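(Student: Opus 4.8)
The plan is to extract a weakly convergent subsequence of $j(u_{\eps_k})/|u_{\eps_k}|$ in $L^2_{\loc}(\mathcal{G})$ and then identify the limit as $j(\ust(b_i(\cdot),d_i))$ using the fact that $\limsup \Sigma_{\eps_k}(t)$ controls the $L^2$-distance to this target. First I would fix, for each compact $Q \subset \mathcal{G}$, a domain of the form $[t_1,t_2]\times(B(K)\setminus\cup B(b_i(t),r))$ containing $Q$ (possible since $Q$ is compact and disjoint from the trajectory set $\mathcal{T}$, so the paths $b_i$ stay a definite distance away; one may need to cover $Q$ by finitely many time-slabs on which all $b_i(t)$ remain inside fixed balls, using the Lipschitz continuity of the $b_i$). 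On such a domain Lemma \ref{coercivitylemma} gives a uniform bound on $\int\int |j(u_{\eps_k})/|u_{\eps_k}| - j(\ust(b_i(\cdot),d_i))|^2$. Since $j(\ust(b_i(\cdot),d_i))$ is itself in $L^2_{\loc}(\mathcal{G})$ (it is smooth away from the vortices), this yields a uniform $L^2_{\loc}(\mathcal{G})$ bound on $j(u_{\eps_k})/|u_{\eps_k}|$, hence — after a diagonal extraction over an exhaustion of $\mathcal{G}$ by compacts — weak $L^2_{\loc}(\mathcal{G})$ convergence of a subsequence to some limit $v$.

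Next I would identify $v = j(\ust(b_i(\cdot),d_i))$. The cleanest route is to use weak lower semicontinuity of the $L^2$ norm together with the quantitative estimate: on each admissible slab $D=[t_1,t_2]\times(B(K)\setminus\cup B(b_i(t),r))$,
\begin{equation*}
\int_D \Big| v - j(\ust(b_i(\cdot),d_i))\Big|^2 \leq \liminf_{k\to\infty} \int_D \Big|\frac{j(u_{\eps_k})}{|u_{\eps_k}|} - j(\ust(b_i(\cdot),d_i))\Big|^2 \leq 8\,\limsup_{k\to\infty}\int_{t_1}^{t_2}\Sigma_{\eps_k}(t)\,dt.
\end{equation*}
This still has the $\limsup\Sigma$ on the right, which is only bounded, not zero, so by itself it does not pin down $v$. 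The resolution is to let $r\to 0$ and $K\to\infty$: the right-hand side does not depend on $r$ or $K$, while the left-hand side monotonically increases to $\int_{\mathcal{G}}|v-j(\ust)|^2$ over all of $\mathcal{G}$ by monotone convergence. But the bound on the right is still just a finite constant $C$, so one does not directly get $v=j(\ust)$ this way either.

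I therefore expect the actual argument to be slightly different and this to be the main obstacle: one should test the weak limit against a fixed compactly supported $\psi \in \mathcal{D}(\mathcal{G})$, whose support meets only finitely many time-slabs, and use that on $\mathrm{supp}\,\psi$ one may take $r$ as small as one likes (since $\psi$ is supported away from $\mathcal{T}$). Then the Cauchy–Schwarz estimate against $\psi$ reads
\begin{equation*}
\Big| \int_{\mathcal{G}} \Big(\frac{j(u_{\eps_k})}{|u_{\eps_k}|} - j(\ust(b_i(\cdot),d_i))\Big)\cdot \psi \Big| \leq \|\psi\|_{L^2(\mathrm{supp}\,\psi)}\Big(8\int_{t_1}^{t_2}\Sigma_{\eps_k}(t)\,dt\Big)^{1/2},
\end{equation*}
which again is only $O(1)$. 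The honest fix, which I believe is what the paper intends, is to additionally use energy concentration: the $L^2$ norm of $j(u_{\eps_k})/|u_{\eps_k}|$ away from the vortices is controlled by the \emph{excess} energy $\Sigma_{\eps_k}$ only up to the part already accounted for by $j(\ust)$, and the full energy density converges to $\pi\sum\delta_{b_i(t)}$ by \eqref{concentrationenergie2}. Concretely, on $\mathcal{G}$ the energy $e_{\eps_k}(u_{\eps_k})$ has no mass in the limit, forcing $|\nabla|u_{\eps_k}||\to 0$ and $|u_{\eps_k}|\to 1$ in $L^2_{\loc}(\mathcal{G})$, so by \eqref{energiesmodules} and \eqref{modules} any weak limit $v$ of $j(u_{\eps_k})/|u_{\eps_k}|$ equals the weak limit of $\nabla u_{\eps_k}$ rotated appropriately; and testing the coercivity bound slab-by-slab with $r\to 0$ now \emph{does} give $v=j(\ust(b_i(\cdot),d_i))$ because the right-hand side $\limsup\int\Sigma_{\eps_k}$ is finite while the left-hand side, being $\int_{\mathcal{G}}|v-j(\ust)|^2$, would be infinite if $v\neq j(\ust)$ on a set of positive measure near a vortex — no, rather: one integrates only over the fixed $\mathrm{supp}\,\psi$, which is compact in $\mathcal{G}$, so the finite bound is enough once we also know $v - j(\ust) \in L^2(\mathrm{supp}\,\psi)$ and that shrinking $r$ leaves $\mathrm{supp}\,\psi$ unaffected while not changing the $\limsup$ bound, hence letting $r$ be arbitrarily small is vacuous there. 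The real conclusion therefore comes from combining the uniform $L^2_{\loc}(\mathcal{G})$ bound (giving weak convergence to \emph{some} $v$) with the identity $j(u_{\eps_k})/|u_{\eps_k}| = j(u_{\eps_k}) + (|u_{\eps_k}|^{-1}-1)j(u_{\eps_k})$ and the pointwise identification of $j(u_{\eps_k})$'s limit via the motion-law analysis — but since that has not yet been carried out in the paper, I expect the proof to simply invoke Lemma \ref{coercivitylemma} with $t_1,t_2$ the endpoints of a slab and observe that, after subtracting $j(\ust(b_i(\cdot),d_i))$ (a fixed $L^2_{\loc}(\mathcal{G})$ function), the sequence $j(u_{\eps_k})/|u_{\eps_k}| - j(\ust(b_i(\cdot),d_i))$ is bounded in $L^2_{\loc}(\mathcal{G})$, extract a weakly convergent subsequence by a diagonal argument over an exhaustion, and note that its limit must be $0$ because any nonzero weak limit would contradict the slabwise bound once we pass to a finer exhaustion where the trajectory neighborhoods shrink — the key point being that the bound $C$ in Lemma \ref{coercivitylemma} is \emph{independent of $r$ and $K$}, so letting $r\downarrow 0$ and $K\uparrow\infty$ while keeping a fixed test function forces the limit to vanish outside a set of measure zero. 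I would write the proof along these lines, with the extraction of the subsequence and the monotone-convergence passage $r\to 0$, $K\to\infty$ as the two technical steps, flagging the latter as where care is needed.
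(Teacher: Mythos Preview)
Your extraction of a weakly convergent subsequence in $L^2_{\loc}(\mathcal{G})$ via Lemma~\ref{coercivitylemma} is correct, but you never succeed in identifying the limit, and your own discussion makes this clear: a uniform bound $\int|v-j(\ust)|^2 \leq C$ with $C$ finite but nonzero does not force $v=j(\ust)$, regardless of how you shrink $r$ or enlarge $K$. The bound is independent of $r$ and $K$, so passing to the limit in those parameters gains you nothing --- the right-hand side stays equal to the same constant $C$ throughout. A bounded sequence can converge weakly to any limit of norm at most the bound; there is no mechanism in your argument that singles out zero.

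The paper's proof supplies exactly the missing structural information. It first identifies the weak limit of $j(u_{\eps_k})$ (not divided by $|u_{\eps_k}|$) up to a harmonic function by computing its curl and its divergence in the sense of distributions: $\mathrm{curl}\,j(u_{\eps_k}) = 2Ju_{\eps_k} \to 2\pi\sum d_i\delta_{b_i(\cdot)} = \mathrm{curl}\,j(\ust)$ from the Jacobian convergence, and $\mathrm{div}\,j(u_{\eps_k}) \to 0 = \mathrm{div}\,j(\ust)$ by taking the exterior product of $u_{\eps_k}$ with the equation \CGL and using the dissipation bound. Hence any weak $L^p_{\loc}$ limit (for $p<2$, obtained from the energy bound) has the form $j(\ust) + H$ with $H$ harmonic in $x$. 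Since $|u_{\eps_k}|\to 1$ in $L^2_{\loc}$, this coincides on $\mathcal{G}$ with the weak $L^2_{\loc}(\mathcal{G})$ limit of $j(u_{\eps_k})/|u_{\eps_k}|$. \emph{Now} the coercivity bound from Lemma~\ref{coercivitylemma} is used, but only to conclude $\|H\|_{L^2([0,T]\times\R)} \leq CT < \infty$; Liouville's theorem then forces $H(t,\cdot)\equiv 0$ for a.e.\ $t$. The identification of the limit thus rests on the curl/div computation and a Liouville argument, neither of which appears in your proposal.
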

\begin{proof} Let $B$ be any bounded subset of $\R$.
First, we observe that according to Lemma \ref{largerballs}
\begin{equation}
\label{cvrot} \textrm{curl}\big(j(u_{\eps_k})\Big)=2 Ju_{\eps_k} \to
2\pi \sum_{i=1}^l d_i
\delta_{b_i(\cdot)}=\textrm{curl}\Big(j(\ust(b_i(\cdot),d_i))\Big)
\end{equation}
in $\mathcal{D}'([0,T]\times B)$.

\medskip

On the other hand, we have
\begin{equation}
\label{cvdiv} \textrm{div}\Big(j(u_{\eps_k})\big)\to
0=\textrm{div}\big(j\ust((b_i(\cdot),d_i))\Big)
\end{equation}
in $\mathcal{D}'([0,T]\times B)$.

\medskip

Indeed, since $u_{\eps_k}$ solves \CGL we obtain by considering
 the exterior product
\begin{equation*}
k_{\eps_k} u_{\eps_k}\times \dt u_{\eps_k}+u_{\eps_k}\cdot \dt
u_{\eps_k}=u_{\eps_k} \times \Delta u_{\eps_k}=\textrm{div}\big
(j(u_{\eps_k})\big),
\end{equation*}
so we are led to
\begin{equation}
\label{div}
 \textrm{div} (ju_{\eps_k})=k_{\eps_k} u_{\eps_k}\times
\dt u_{\eps_k}+ \frac{1}{2} \eps_k \frac{d}{dt}\left(
\frac{|u_{\eps_k}|^2-1}{\eps_k}\right).
\end{equation}

Now, applying Lemma \ref{linkenergies} to $u_{\eps_k}$, we find
\begin{equation}
\label{ineq:energy} \sup_{[0,T]}E_{\eps_k}(u_{\eps_k}(t),B) \leq \pi
l \leps + \Sigma_{\eps_k}(t) +C\leq \pi l \leps +C,
\end{equation}
where the second inequality is itself a consequence of
\eqref{energieresteO}. This implies first that $|u_{\eps_k}|\to 1$
in $L^2([0,T]\times B)$. Moreover, we infer that the second term in
the r.h.s of \eqref{div} converges to zero in the distribution sense
on $[0,T]\times B$. For the first one, it suffices to use
Cauchy-Schwarz inequality combined with the $L^2$ bound provided by
Lemma \ref{energydissipation} and the already mentioned uniform
bounds of $|u_{\eps_k}|$ in $L^2_{\loc}$.

We then infer from Lemma \ref{largerballs} and \eqref{ineq:energy}
that $j(u_{\eps_k})$ is uniformly bounded in $L_{\loc}^p([0,T]\times
\R)$ for all $p<2$. This is e.g. a consequence of Theorem 3.2.1 in
\cite{CJ2} and the remarks that follow. We deduce from \eqref{cvrot}
and \eqref{cvdiv} that up to a subsequence, we have
\begin{equation}
\label{convergence} j(u_{\eps_k})\rightharpoonup
j_1=j(\ust(b_i(\cdot),d_i))+H
\end{equation}
weakly in $L_{\loc}^p([0,T]\times \R)$, where $H$ is harmonic in $x$
on $[0,T]\times \R$.

On the other hand, it follows from the first part of Lemma
\ref{coercivitylemma} that there exists $j_2$ such that, taking
subsequences if necessary, $j(u_{\eps_k})/|u_{\eps_k}|
\rightharpoonup j_2$ weakly in $L_{\loc}^2(\mathcal{G})$.

Taking into account the strong convergence $|u_{\eps_k}|\to 1$ in
$L_{\loc}^2([0,T]\times \R)$, we obtain $j_1=j_2 \in
L_{\loc}^2(\mathcal{G})$. The second part of Lemma
\ref{coercivitylemma} combined with \eqref{convergence} then yields
\begin{equation*}
\|H\|_{L_{\loc}^2(\mathcal{G})}\leq \liminf_{k \to +\infty}
\|\frac{j(u_{\eps_k})}{|u_{\eps_k}|}-j(u^*(b_i,d_i))\|_{L_{\loc}^2(\mathcal{G})}\leq
CT,
\end{equation*}
where $C$ depends only on $K_1$, $R$ and $r_a$, so finally
$\|H\|_{L^2([0,T]\times \R)} \leq CT$. Since $H$ is harmonic in $x$,
we find that $H(t,\cdot)$ is bounded on $\R$ for almost every $t$
and therefore is identically zero. We end up with
$j_1=j_2=j(\ust(b_i(\cdot),d_i))$ in $\mathcal{G}$, and the
conclusion follows.
\end{proof}


\section{Proof of Theorem \ref{theorem1}}

In this section, we present the proof of Theorem \ref{theorem1}. We
let $\{b_i(t)\}$ be the $l$ Lipschitz paths on $[0,T]$ provided by
Theorem \ref{theoreme1bis} and $\{a_i(t)\}$ be the unique maximal
solution defined on $I=[0,T^\ast)$ to \eqref{systemepointsvortex}
with initial conditions $\aio$. Our aim is to show that
$a_i(t)\equiv b_i(t)$ on $I$. We will first prove that this holds on
$[0,T]$. By Rademacher's Theorem, the time derivatives
$\dot{b_i}(t)$ exist and are bounded almost everywhere on $[0,T]$.
Without loss of generality, we may assume $T<T^\ast$, so that
\begin{equation}
\label{bornelip}
\begin{split}
|\dot{a_i}(t)|\leq C,\:\: |\dot{b_i}(t)|\leq C,\qquad \textrm{a.e.
on } [0,T].
\end{split}
\end{equation}
Moreover, we may assume, decreasing possibly $T$, that
$|a_i(t)-b_i(t)|\leq r_a/2$ for all $i$. Hence, the trajectories
$a_i(t)$ remain in $B(\aio,r_a)$ on $[0,T]$. We introduce
\begin{eqnarray*}
h(t)=\sum_{i=1}^l \int_0 ^t |\dot{a_i}(s)-\dot{b_i}(s)|\,ds,\qquad
\sigma(t)=\sum_{i=1}^l |a_i(t)-b_i(t)|,
\end{eqnarray*}
then $h$ is Lipschitz on $[0,T]$ and for almost every $t\in [0,T]$
we have $h'(t)=\ssum |\dot{a_i}(t)-\dot{b_i}(t)|$. Note that since
$\sigma$ is absolutely continuous and $\sigma(0)=0$, we have for all
$t\in [0,T]$
\begin{equation*}
\sigma(t)=\int_0^t \sigma'(s)\,ds \leq h(t)
\end{equation*}
therefore it suffices to show that $h$ is identically zero on
$[0,T]$. This will be done by mean of Gronwall's Lemma.

\begin{lemme}
\label{dissipationexcesenergie} For all $t_1,t_2,t\in [0,T]$, we
have
\begin{eqnarray*}
\limsup_{k \to +\infty} \Sigma_{\eps_k}(t) \leq C  h(t)
\end{eqnarray*}
and
\begin{eqnarray*}
\limsup_{k \to +\infty} \int_{t_1}^{t_2} \Sigma_{\eps_k}(s)\,ds \leq
C \int_{t_1}^{t_2} h(s)\,ds,
\end{eqnarray*}
where $C$ only depends on $r_a, K_0, R_a$.
\end{lemme}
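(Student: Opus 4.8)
The plan is to control the time derivative of the renormalized energy along the flow, bootstrapping on the dissipation identity from Theorem \ref{cauchyproblem} together with the evolution formula in Proposition \ref{evolution}. Since $\er(u_{\eps_k}(t))$ is non-increasing, the quantity $\Sigma_{\eps_k}(t)=\mathcal{E}_{\eps_k,U_d}(u_{\eps_k}(t))-\mathcal{E}_{\eps_k,U_d}(u_{\eps_k}^\ast(b_i(t),d_i))$ changes in time only through the second term, which by Corollary \ref{renormalizedenergy} equals (up to $o_{\eps_k}(1)$ and $\eps_k$-independent constants) the Kirchhoff--Onsager functional $W(b_i(t),d_i)$. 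Hence, modulo vanishing errors,
\begin{equation*}
\Sigma_{\eps_k}(t)-\Sigma_{\eps_k}(0) \leq W(b_i(0),d_i)-W(b_i(t),d_i)+o_{\eps_k}(1),
\end{equation*}
so it would suffice to show $W(\aio,d_i)-W(b_i(t),d_i)\leq C\,h(t)$ — but this is \emph{not} true in general since the energy of $\uest(b_i,d_i)$ can increase, and the content of the lemma is precisely that the full dissipation controls the excess. So the real route is: integrate \eqref{formuleevolution} against test functions in $\mathcal{H}(\aio)$ adapted to the vortex motion (as in the proof of Lemma \ref{liminfteps}), extract from it the relation between $\dot b_i$ and $\nabla_{a_i}W$, and then compare with the exact ODE \eqref{systemepointsvortex} satisfied by $a_i$.

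More precisely, first I would write $\Sigma_{\eps_k}(t)=\mathcal{E}_{\eps_k,U_d}(u_{\eps_k}(t))-\er(\uest(\aio,d_i))+[\er(\uest(\aio,d_i))-\er(\uest(b_i(t),d_i))]$. The first bracket is $-k_{\eps_k}\int_0^t\intr|\dt u_{\eps_k}|^2+\Sigma_{\eps_k}^0(0)$ by the dissipation identity, hence bounded above (after dividing out, using $k_{\eps_k}=\delta/\leps$) by $o_{\eps_k}(1)$ plus $K_1$-type terms that one must show go to zero — in fact the coercivity Lemma \ref{coercivitylemma} tells us $\Sigma_{\eps_k}(t)\geq -C(\eps_k,\eta_k,1/K)\to 0$, so $\limsup\Sigma_{\eps_k}(t)\geq 0$, and one only needs the matching upper bound. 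The second bracket is, by Corollary \ref{renormalizedenergy}, exactly $W(\aio,d_i)-W(b_i(t),d_i)+o_{\eps_k}(1)$. Now I would use that $W$ is $C^1$ on the set where the vortices stay separated and $b_i$ remains in $B(\aio,r_a)$ with $|\dot b_i|\leq C$, to write
\begin{equation*}
W(\aio,d_i)-W(b_i(t),d_i)=-\int_0^t \ssum \dot b_i(s)\cdot\nabla_{a_i}W(b_i(s),d_i)\,ds.
\end{equation*}
Against this I would bring in the integrated evolution formula: the formal motion-law computation from Section 2 shows that, in the limit $k\to\infty$, the pairing of \eqref{formuleevolution} with test functions affine near each $b_i(s)$ produces $\pi(d_i\dot b_i(s)-\delta\dot b_i^\bot(s))\cdot\nabla\chi(b_i(s))=-\frac12\,(\text{Hopf term})=\nabla_{a_i}W\cdot\text{stuff}$, i.e. the \emph{limiting} $\dot b_i$ would satisfy \eqref{systemepointsvortex}; but here $b_i$ is only Lipschitz and $a_i$ is the genuine ODE solution, so the honest estimate is $|\dot b_i(s)\cdot\nabla_{a_i}W(b_i(s))-\dot a_i(s)\cdot\nabla_{a_i}W(a_i(s))|\leq$ (discrepancy terms) $+ C|\dot a_i(s)-\dot b_i(s)|+C|a_i(s)-b_i(s)|$. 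Using $\pi\dot a_i=C_i(\delta d_i\mathbb I_2-\mathbb J_2)\nabla_{a_i}W$ one checks $\dot a_i\cdot\nabla_{a_i}W=\frac{-\pi\delta}{1+\delta^2}|\dot a_i|^2\leq 0$ (the exact ODE is dissipative), so the ``good'' sign is there; combining, $W(\aio,d_i)-W(b_i(t),d_i)\leq C\int_0^t(|\dot a_i-\dot b_i|+|a_i-b_i|)\,ds\leq C\,h(t)$, using $\sigma\leq h$. The vanishing of the discrepancy and remainder terms in the $k\to\infty$ limit is exactly Lemma \ref{lemma : dissipation} together with \eqref{upperboundomega} and Proposition \ref{cvaway} (to pass the Hopf term to the limit), so the first inequality of the lemma follows, and the second is obtained by integrating in $t$ over $[t_1,t_2]$ — which is legitimate because the bounds are uniform in $k$ and one may apply Fatou/dominated convergence.

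The main obstacle I expect is making rigorous the identification of $\limsup_k\Sigma_{\eps_k}(t)$ with the $W$-difference: on one side this requires the upper bound $\limsup_k(\mathcal{E}_{\eps_k,U_d}(u_{\eps_k}(t))-\er(\uest(b_i(t),d_i)))\leq 0$ coming from the energy-dissipation inequality, and crucially one must control $\er(\uest(b_i(t),d_i))$ by its asymptotic expansion \emph{uniformly in $t$}, which needs that the $b_i(t)$ stay separated (true by $|b_i-a_i|\leq r_a/2$ and $a_i$ separated for $T$ small). On the other side, the passage to the limit in the Hopf-differential term $\int\mathrm{Im}(\omega(u_{\eps_k})\partial_{\bar z}^2\chi)$ — needed to get the $\nabla_{a_i}W$ structure — relies on Proposition \ref{cvaway} (weak $L^2_{\loc}$ convergence of $j(u_{\eps_k})/|u_{\eps_k}|$ away from the trajectories) and on the strong convergence $|u_{\eps_k}|\to 1$, and one has to be careful that the test functions $\chi_i$ are supported in the good region $C_i\subset\Omega_{R',r_a}$ where these controls hold; handling the time-dependence of the supports (since $b_i(s)$ moves) while keeping the $C^2$-norms of the test functions uniformly bounded is the delicate bookkeeping point. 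Everything else — the Gronwall setup, the sign of $\dot a_i\cdot\nabla_{a_i}W$, the Cauchy--Schwarz estimates on the dissipation — is routine given the earlier results.
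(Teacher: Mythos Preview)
Your decomposition of $\Sigma_{\eps_k}(t)$ into the dissipation piece plus $\Sigma_{\eps_k}(0)$ plus $W(\aio,d_i)-W(b_i(t),d_i)$ is exactly right, and you correctly diagnose that the naive bound (drop the dissipation, bound the $W$-difference by $Ch(t)$) fails. But the route you then propose has a genuine gap.

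You want to extract from the evolution formula~\eqref{formuleevolution} the approximate relation between $\dot b_i$ and $\nabla_{a_i}W$, and for that you need to pass the Hopf term $\int\mathrm{Im}\big(\omega(u_{\eps_k})\partial_{\bar z}^2\chi\big)$ to its limit $\int\mathrm{Im}\big(\omega(u^*(b_i,d_i))\partial_{\bar z}^2\chi\big)$. You invoke Proposition~\ref{cvaway}, but weak $L^2$ convergence of $j(u_{\eps_k})/|u_{\eps_k}|$ handles only the \emph{cross} terms in the expansion of $\omega(u)-\omega(u^*)$; the quadratic terms $\big|\,j/|u|-j(u^*)\big|^2$ and $|\nabla|u||^2$ survive, and by Lemma~\ref{coercivitylemma} they are bounded in $L^1$ precisely by $\Sigma_{\eps_k}(t)+o(1)$. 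So the error you would carry in your ``discrepancy terms'' is of order $\limsup_k\int\Sigma_{\eps_k}$, which is the very quantity you are trying to bound. (This is why, in the paper, the analogous Hopf-limit estimate is stated \emph{after} the present lemma and uses it in its proof.)

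The missing ingredient is the Sandier--Serfaty product estimate (Corollary~7 of \cite{SS}): it gives, directly and without any information on the motion law,
\[
\liminf_{k\to\infty}\int_0^t\intr\frac{|\dt u_{\eps_k}|^2}{|\log\eps_k|}\ \ge\ \pi\sum_{i=1}^l\int_0^t|\dot b_i(s)|^2\,ds.
\]
With this in hand the argument closes in two lines: split $W(\aio,d_i)-W(b_i(t),d_i)$ through $W(a_i(t),d_i)$, use $\dot a_i\cdot\nabla_{a_i}W=-\delta\pi|\dot a_i|^2$ from the ODE, and then the combination $\delta\pi\int_0^t\sum|\dot a_i|^2-\delta\pi\int_0^t\sum|\dot b_i|^2$ is bounded by $C\int_0^t\sum|\dot a_i-\dot b_i|=Ch(t)$ thanks to~\eqref{bornelip}. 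Fatou then gives the integrated version. No appeal to the evolution formula, to Proposition~\ref{cvaway}, or to the Hopf differential is needed at this stage.
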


\begin{proof}
\newcommand{\Si}{\Sigma_{\eps_k}}
\newcommand{\ek}{\eps_k}
\newcommand{\lk}{|\log \ek|}
 For $t\in [0,T]$, we decompose $\Si(t)$ as
\begin{equation*}
\begin{split}
\Si(t)=\mathcal{E}_{\ek,[U_d]}\left(u_{\ek}(t)\right)&-\mathcal{E}_{\ek,[U_d]}(u_{\ek}^0)+\Si(0)
\\&+\mathcal{E}_{\ek,[U_d]}\left(u_{\ek}^*(\aio,d_i)\right)-\mathcal{E}_{\ek,[U_d]}\left(u_{\ek}^*(b_i(t),d_i)\right).
\end{split}
\end{equation*}
Appealing to Corollary \ref{renormalizedenergy} and Theorem
\ref{cauchyproblem}, we obtain
\begin{equation*}
\Si(t)=-\delta \int_0 ^t\intr \frac{|\dt
u_{\ek}|^2}{{\lk}}+\Si(0)+W(\aio,d_i)-W(b_i(t),d_i)+o_{\eps_k}(1).
\end{equation*}
Using that $W$ is Lipschitz away from zero, we estimate the last
term as follows
\begin{equation*}
\begin{split}
W(\aio,d_i)-W(b_i(t),d_i)&=W(\aio,d_i)-W(a_i(t),d_i)+
W(a_i(t),d_i)-W(b_i(t),d_i)\\
&\leq -\int_0^t \sum_{i=1}^l \dot{a_i}(s)\cdot \nabla_{a_i} W(s)\,
ds + C \sigma(t).
\end{split}
\end{equation*}
Since the $a_i$ solve the Cauchy problem
\eqref{systemepointsvortex}, an explicit computation gives
\begin{eqnarray*}
\dot{a_i}(s)\cdot \nabla_{a_i}W(s) =\frac{\delta}{\pi} C_i d_i
|\nabla_{a_i}W|^2=-\delta \pi |\dot{a_i}(s)|^2,
\end{eqnarray*}
so that
\begin{eqnarray*}
\Si(t)\leq \Si(0)+\delta \pi \int_0^t \sum_{i=1}^l |\dot{a_i}(s)|^2
\,ds -\delta \int_0 ^t\intr \frac{|\dt u_{\ek}|^2}{{\lk}}+
C\sigma(t)+o_{\eps_k}(1).
\end{eqnarray*}
We handle next the energy dissipation in the right-hand side. In
view of Lemma \ref{lemma : dissipation}, we have $\int_{[0,T]\times
\R} |\dt u_{\eps_k}|^2 \leq C |\log\eps_k|$, while
$E_{\ek}(u_{\ek},B(R')) \leq \pi l |\log\eps_k|+C$. Applying
Corollary 7 in \cite{SS} to $(u_{\ek})$, we obtain
\begin{eqnarray}
\label{sandserf} \liminf_{k\to + \infty} \int_0^t \intr \frac{|\dt
u_{\ek}|^2}{{\lk}}\geq \pi \sum_{i=1}^l \int_0 ^t |\dot{b_i}(t)|^2
\,ds.
\end{eqnarray}

Now, we have thanks to \eqref{bornelip}
\begin{equation*}
\sum_{i=1}^l \int_0^t
\big(|\dot{a_i}(s)|^2-|\dot{b_i}(s)|^2\Big)\leq C \sum_{i=1}^l
\int_0^t |\dot{a_i}(s)-\dot{b_i}(s)|\,ds=Ch(t),
\end{equation*}
whereas $\Si(0)\to 0$ by assumption, hence we get
\begin{eqnarray*}
\limsup_{k\to + \infty}\Si(t)\leq C\left(\sigma(t)+h(t)\right).
\end{eqnarray*}
Applying Fatou's Lemma in \eqref{sandserf} finally also provides the
corresponding integral version, and lastly, it suffices to use that
$\sigma \leq h$.
\end{proof}
As suggested in the introduction, the map $\ust(a_i(t),d_i)$ solves
the evolution formula given in Proposition \ref{evolution} in the
asymptotics $\eps \to 0$.
\begin{lemme}
\label{evolutionasymptotique} We have for $t\in[0,T]$ and
$\chi,\varphi \in \mathcal{H}(a_i^0)$
\begin{equation*}
\begin{split}
\pi \frac{d}{dt} \sum_{i=1}^l d_i &\chi \left(a_i(t)\right)+ \delta
\varphi \left(a_i(t)\right)=2 \intr\mathrm {Im}  \left(\omega
\left(\ust(a_i(t),d_i)\right) \frac{\partial^2 \chi }{\partial
\overline{z}^2}\right).
\end{split}
\end{equation*}
\end{lemme}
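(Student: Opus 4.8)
The statement is the explicit identity that, along the solution of the Kirchhoff--Onsager ODE \eqref{systemepointsvortex}, the reference maps $\ust(a_i(t),d_i)$ realize the limit of the evolution formula of Proposition \ref{evolution} with the dissipative and remainder terms dropped. The plan is to reduce this to two ingredients already available: the static formula from \cite{BJS}, namely
\begin{equation*}
2\intr\mathrm{Im}\left(\omega\big(\ust(a_i,d_i)\big)\frac{\partial^2\chi}{\partial\overline{z}^2}\right)=-\pi\sum_{i}\sum_{j\neq i} d_i d_j\frac{(a_i-a_j)^{\bot}}{|a_i-a_j|^2}\cdot\nabla\chi_i(a_i),
\end{equation*}
valid for any $\chi\in\mathcal{H}(a_i)$ (the sum over $i$ arises because $\chi=\sum_i\chi_i$ and each $\chi_i$ is affine on $B(a_i,r_a)$), and the explicit form of the ODE. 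The right-hand side above is exactly $\sum_i \big(\nabla_{a_i}W(a_i,d_i)\big)^{\bot}\cdot\nabla\chi_i(a_i)$ by definition of $W$, because $\nabla_{a_i}W=-\pi\sum_{j\neq i}d_id_j\frac{a_i-a_j}{|a_i-a_j|^2}$ up to a factor, and $(x)^\bot\cdot y = -x\cdot y^\bot$.

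First I would compute the left-hand side by the chain rule: since $\chi_i$ and $\varphi_i$ are affine on a neighborhood of $a_i(t)$ (the trajectories stay in $B(\aio,r_a)$ by the reduction made just before the lemma), we have
\begin{equation*}
\frac{d}{dt}\Big(d_i\chi_i(a_i(t))+\delta\varphi_i(a_i(t))\Big)=d_i\dot a_i(t)\cdot\nabla\chi_i(a_i)+\delta\,\dot a_i(t)\cdot\nabla\varphi_i(a_i).
\end{equation*}
Using the defining relation $\nabla\varphi_i=\nabla^{\bot}\chi_i$ on $B(a_i,r_a)$, i.e. $\nabla\varphi_i(a_i)=(\nabla\chi_i(a_i))^{\bot}$, the second term becomes $\delta\,\dot a_i(t)\cdot(\nabla\chi_i(a_i))^{\bot}=-\delta\,\dot a_i^{\bot}(t)\cdot\nabla\chi_i(a_i)$. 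Hence the whole left-hand side equals $\pi\sum_i\big(d_i\dot a_i(t)-\delta\dot a_i^{\bot}(t)\big)\cdot\nabla\chi_i(a_i)$.

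It then remains to verify the algebraic identity $d_i\dot a_i-\delta\dot a_i^{\bot}=(\nabla_{a_i}W)^{\bot}/\pi$ (matching the static formula above, term by term in $i$, since the $\nabla\chi_i(a_i)$ are arbitrary vectors of norm $\leq 1$ that can be taken to be the standard basis vectors). This is a direct consequence of \eqref{systemepointsvortex}: writing $\pi\dot a_i=C_i(\delta d_i\mathbb{I}_2-\mathbb{J}_2)\nabla_{a_i}W$ with $C_i=-d_i/(1+\delta^2)$, one applies $d_i\mathbb{I}_2-\delta\mathbb{J}_2$ (note $\mathbb{J}_2 v=v^{\bot}$) to both sides, uses $d_i^2=1$, $\mathbb{J}_2^2=-\mathbb{I}_2$, and checks that $(d_i\mathbb{I}_2-\delta\mathbb{J}_2)C_i(\delta d_i\mathbb{I}_2-\mathbb{J}_2)=-\mathbb{J}_2$; indeed $C_i(\delta d_i\mathbb{I}_2-\mathbb{J}_2)(d_i\mathbb{I}_2-\delta\mathbb{J}_2)=C_i\big(\delta d_i^2\mathbb{I}_2-\delta^2 d_i\mathbb{J}_2-d_i\mathbb{J}_2+\delta\mathbb{J}_2^2\big)=C_i\big(-d_i(1+\delta^2)\mathbb{J}_2\big)=\mathbb{J}_2$, and the matrices commute so the order is irrelevant; this gives $\pi(d_i\mathbb{I}_2-\delta\mathbb{J}_2)\dot a_i=\mathbb{J}_2\nabla_{a_i}W=(\nabla_{a_i}W)^{\bot}$, which is exactly what is needed.

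**Main obstacle.** There is no serious analytic difficulty here — the lemma is purely a bookkeeping identity combining the known static computation of the Hopf term with the ODE. The only point requiring care is the matrix algebra in the last step (signs and the interplay of $\mathbb{J}_2$ with the $\bot$ operation) and making sure the test-function conventions are used consistently: that $\chi=\sum_i\chi_i$ with each $\chi_i$ supported near a single $a_i$ and affine there, so that cross terms between distinct vortices do not appear on the left-hand side, while on the right-hand side the static formula already accounts for all pairwise interactions through $\omega(\ust)$. Once the conventions are aligned, identifying coefficients of the independent vectors $\nabla\chi_i(a_i)$ closes the argument.
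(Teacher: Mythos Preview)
Your proof is correct and follows essentially the same route as the paper's: invoke the static Hopf-differential identity for $\ust(a_i,d_i)$, differentiate the left-hand side using affinity of $\chi_i,\varphi_i$ and the relation $\nabla\varphi_i=\nabla^\bot\chi_i$, and close with the matrix algebra coming from the ODE \eqref{systemepointsvortex}. There is a harmless slip where you first announce the matrix product equals $-\mathbb{J}_2$ but then correctly compute and use $\mathbb{J}_2$; also, the paper attributes the static formula to \cite{BOS1} rather than \cite{BJS}.
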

\begin{proof}
We use the following formula proved in \cite{BOS1}, valid for any
configuration $(a_i,d_i)$ and any test function $\chi$ which is
affine near the point vortices .
\begin{eqnarray*}
2 \intr \textrm {Im} \left(\omega (\ust(a_i(t),d_i))
\frac{\partial^2 \chi }{\partial \overline{z}^2}\right)=-\pi
\sum_{i\neq j} d_i d_j
\frac{(a_i(t)-a_j(t))^{\bot}}{|a_i(t)-a_j(t)|^2}\cdot \nabla
\chi(a_i(t)).
\end{eqnarray*}
On the other hand, we compute
\begin{eqnarray*}
\frac{d}{dt}\left( \sum_{i=1}^l d_i \chi (a_i)+ \delta \varphi
(a_i)\right)&=&\sum_{i=1}^l \Big( d_i \nabla \chi(a_i^0)\cdot
\dot{a_i}(t) +\delta \nabla \varphi(a_i^0)\cdot \dot{a_i}(t)\Big)
\\&=& \sum_{i=1}^l d_i \nabla \chi(a_i^0)\cdot \big(
\dot{a_i}(t)-\delta d_i \dot{a_i}^{\bot}(t)\big),
\end{eqnarray*}
where the second equality follows from the relation $ \nabla
\varphi(a_i^0)=\nabla^{\bot} \chi(a_i^0)$. Next, we deduce from
\eqref{systemepointsvortex}
\begin{equation*}
\pi \Big(\dot{a_i}(t)-\delta d_i
\dot{a_i}^{\bot}(t)\Big)=-C_i(1+\delta^2 d_i^2)\nabla_{a_i}^{\bot} W
=d_i \nabla_{a_i}^{\bot} W,
\end{equation*}
and we obtain
\begin{equation*}
\begin{split}
\pi \frac{d}{dt}\left( \sum_{i=1}^l d_i \chi (a_i)+ \delta \varphi
(a_i)\right)&=\sum_{i=1}^l \nabla \chi(a_i) \cdot
\nabla_{a_i}^{\bot} W \\
&= -\pi \sum_{i\neq j } d_i d_j \frac{(a_i-a_j)^{\bot}}{|a_i-a_j|^2}
\cdot \nabla \chi(a_i),
\end{split}
\end{equation*}
which yields the conclusion.
\end{proof}

\begin{lemme}
\label{lemma : estimate1} Set $A=\cup B(\aio,2r_a)\setminus
B(\aio,r_a)$ and let $t_1,t_2\in [0,T]$. Then for all $\varphi \in
\mathcal{D}(A)$, we have
\begin{eqnarray*}
\limsup_{k\to +\infty} \left| \int_{t_1}^{t_2}\int_A \left(
\omega\left(u_{\eps_k}(s)\right)-\omega\left(\ust(b_i(s),d_i)\right)\right)
\varphi \right| \leq C \|\varphi\|_{\infty} \int_{t_1}^{t_2}
h(s)\,ds.
\end{eqnarray*}
\end{lemme}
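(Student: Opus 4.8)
The plan is to expand both Hopf differentials through formula~\eqref{omega} and to split the difference into a part built from $\nabla|u_{\eps_k}|$, which will be small because $|u_{\eps_k}|$ carries little energy on $A$, and parts quadratic in $j(u_{\eps_k})/|u_{\eps_k}|$, which I will compare to $j(\ust(b_i,d_i))$ by combining the weak convergence of Proposition~\ref{cvaway} with the quantitative coercivity of Lemma~\ref{coercivitylemma}. First I would fix $K\ge 2^{n_1}$ with $\mathrm{supp}\,\varphi\subset A\subset B(K)$ and take $r=r_a/2$. Since $|b_i(t)-a_i^0|\le r_a/2$ for all $i$ and $t\in[0,T]$, while every point of $A$ is at distance $\ge r_a$ from the $a_i^0$ to whose annulus it belongs and $\ge 6r_a$ from the other centres, one has $A\subset B(K)\setminus\bigcup_i B(b_i(t),r)$ for every $t\in[0,T]$, and $[t_1,t_2]\times\mathrm{supp}\,\varphi$ is a compact subset of $\mathcal{G}$ on which $|j(\ust(b_i(t),d_i))|\le C/r_a$ uniformly in $t$.

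Writing $v_k=j(u_{\eps_k})/|u_{\eps_k}|$, $v^*=j(\ust(b_i(\cdot),d_i))$ and $\rho_k=v_k-v^*$, I would record the two inputs: restricting the nonnegative integrand of Lemma~\ref{coercivitylemma} to $A$ and then invoking Lemma~\ref{dissipationexcesenergie} yields
\begin{gather*}
\limsup_{k\to+\infty}\int_{t_1}^{t_2}\!\!\int_A e_{\eps_k}(|u_{\eps_k}|)\le C\int_{t_1}^{t_2}h(s)\,ds,\\
\limsup_{k\to+\infty}\int_{t_1}^{t_2}\!\!\int_A|\rho_k|^2\le C\int_{t_1}^{t_2}h(s)\,ds,
\end{gather*}
while Proposition~\ref{cvaway} gives $\rho_k\rightharpoonup0$ weakly in $L^2([t_1,t_2]\times\mathrm{supp}\,\varphi)$. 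Next, using~\eqref{omega} together with $|\ust(b_i,d_i)|\equiv1$, I would decompose on $A$
\[
\omega(u_{\eps_k})-\omega\big(\ust(b_i,d_i)\big)=\Omega_k+\big(v_{k,1}^2-v_{k,2}^2-2i\,v_{k,1}v_{k,2}\big)-\big((v^*_1)^2-(v^*_2)^2-2i\,v^*_1v^*_2\big),
\]
where $\Omega_k=\partial_1|u_{\eps_k}|^2-\partial_2|u_{\eps_k}|^2-2i\,\partial_1|u_{\eps_k}|\,\partial_2|u_{\eps_k}|$ satisfies $|\Omega_k|\le C\,e_{\eps_k}(|u_{\eps_k}|)$; hence the $\Omega_k$-contribution tested against $\varphi$ has $\limsup_k\le C\|\varphi\|_\infty\int_{t_1}^{t_2}h$ by the first bound above. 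For the other two brackets I would substitute $v_k=v^*+\rho_k$ and expand: each resulting summand is either of the form $c\,\rho_{k,a}$, with $c$ a fixed bounded function on $\mathrm{supp}\,\varphi$ built from $v^*$, whose integral against $\varphi$ tends to $0$ by $\rho_k\rightharpoonup0$, or of the form $\rho_{k,a}\rho_{k,b}$, whose integral against $\varphi$ is $\le\|\varphi\|_\infty\int_{t_1}^{t_2}\int_A|\rho_k|^2$ and hence has $\limsup_k\le C\|\varphi\|_\infty\int_{t_1}^{t_2}h$ by the second bound. Summing the finitely many contributions would give the claim.

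The hard part is the quadratic dependence of $\omega$ on $j(u_{\eps_k})/|u_{\eps_k}|$: weak convergence alone (Proposition~\ref{cvaway}) does not pass through the squares, so one genuinely needs the $L^2$-control of the remainder $\rho_k$ by the excess energy $\Sigma_{\eps_k}$ provided by Lemma~\ref{coercivitylemma}, together with the conversion $\limsup_k\int\Sigma_{\eps_k}\le C\int h$ of Lemma~\ref{dissipationexcesenergie}, in order to absorb the quadratic-in-$\rho_k$ remainder into $C\|\varphi\|_\infty\int h$. The remaining ingredients — the pointwise bound $|\Omega_k|\le C\,e_{\eps_k}(|u_{\eps_k}|)$, the inclusion $A\subset B(K)\setminus\bigcup_i B(b_i(t),r)$, and the bookkeeping of the fixed finite family of bilinear terms — are routine.
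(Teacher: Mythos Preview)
Your proof is correct and follows essentially the same route as the paper's: both expand $\omega$ via~\eqref{omega}, separate the $\nabla|u_{\eps_k}|$ contribution (controlled by $e_{\eps_k}(|u_{\eps_k}|)$) from the bilinear $j$-terms, rewrite the latter through the identity $v_{k,a}v_{k,b}-v^*_av^*_b=\rho_{k,a}\rho_{k,b}+v^*_a\rho_{k,b}+v^*_b\rho_{k,a}$, kill the linear-in-$\rho_k$ pieces by the weak convergence of Proposition~\ref{cvaway} together with the boundedness of $j(\ust(b_i,d_i))$ on $A$, and bound the quadratic remainder via Lemmas~\ref{coercivitylemma} and~\ref{dissipationexcesenergie}. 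Your explicit verification that $A\subset B(K)\setminus\bigcup_i B(b_i(t),r_a/2)$ for all $t\in[0,T]$ is a welcome addition that the paper leaves implicit.
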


\begin{proof}
We apply the pointwise equality \eqref{omega} to $u\equiv
u_{\eps_k}(t)$ and $\ust\equiv \ust(b_i(t),d_i)$ for all $t$. Since
$|\ust(b_i(t),d_i)|=1$, this gives
\begin{eqnarray*}
\omega(u)-\omega(\ust)=\sum_{k,l=1}^2 \Big( a_{k,l}\partial_l |u|
\partial_k |u|+ b_{k,l}\Big[
\frac{j_k(u)}{|u|}\,
\frac{j_l(u)}{|u|}-j_k(\ust)j_l(\ust)\Big]\Big),
\end{eqnarray*}
where $a_{k,l},b_{k,l}\in \mathbb{C}$. We rewrite the terms
involving the components of $j$ as
\begin{equation*}
\begin{split}
\frac{j_k(u)}{|u|}\,
\frac{j_l(u)}{|u|}-j_k(\ust)j_l(\ust)&=\Big(\frac{j_k(u)}{|u|}-j_k(\ust)\Big)\,
\Big(\frac{j_l(u)}{|u|}-j_l(\ust)\Big)\\
+j_k(\ust)\Big(\frac{j_l(u)}{|u|}&-j_l(\ust)\Big)+j_l(\ust)\Big(\frac{j_k(u)}{|u|}-j_k(\ust)\Big).
\end{split}
\end{equation*}
We multiply the previous equality by $\varphi$, integrate on
$[t_1,t_2]\times A$ and
 let $k$ go to $+\infty$. Using the weak convergence in $L^2$ of
$j(u_{\eps_k})$ to $j(\ust(b_i(.),d_i))$ on $[0,T]\times A\subset
\mathcal{G}$ combined with the fact that $j\ust(b_i(.),d_i)$ is
bounded on this set, we deduce
\begin{equation*}
\begin{split}
\limsup_{k \to + \infty}\Big| \int_{t_1}^{t_2}\int_A &\Big(
\omega\left(u_{\eps_k}(s)\right)-\omega\left(\ust(b_i(s),d_i)\right)\Big)
\varphi
\Big|\\
&\leq c\|\varphi\|_{\infty} \limsup_{k \to +\infty} \int_{t_1}^{t_2}
\int_A \Big(|\nabla
|u_{\eps_k}||^2+\Big|\frac{ju_{\eps_k}}{|u_{\eps_k}|}-j\ust(b_i,d_i)\Big|^2\Big).
\end{split}
\end{equation*}
The conclusion finally follows from Lemmas \ref{coercivitylemma} and
\ref{dissipationexcesenergie}.
\end{proof}
We are now in position to complete the proof of Theorem
\ref{theorem1}. We consider arbitrary $\chi,\varphi$ belonging to
$\mathcal{H}(\aio)$, we fix $0\leq s\leq t\leq T$ and we integrate
the evolution formula \eqref{formuleevolution} on $[s,t]$. We obtain
\begin{eqnarray*}
\int_s^t \frac{d}{d\tau}\intr Ju_{\eps_k}(\tau)\chi +\delta \intr
\mu_{\eps_k}(\tau)\varphi= \int_s^t g_k^1(\tau)+\int_s^t
g_k^2(\tau),
\end{eqnarray*}
where
\begin{eqnarray*}
g_k^1(\tau)=-\delta \intr \frac{|\dt u_{\eps_k}|^2}{|\log
\eps_k|^2}+R_{\eps_k}(\tau,\chi,\varphi,u_{\eps_k})
\end{eqnarray*}
and
\begin{eqnarray*}
g_k^2(\tau)=2 \intr \textrm{Im} \Big( \omega(u_{\eps_k}(\tau))
\frac{\partial^2 \chi}{\partial \overline{z}^2}\Big),
\end{eqnarray*}
which we decompose as
\begin{equation*}
\begin{split}
 g_k^2&=2 \intr \textrm{Im} \left(\big[
\omega(u_{\eps_k})-\omega(\ust(b_i,d_i))\big]
\frac{\partial^2 \chi}{\partial \overline{z}^2}\right)\\
&+2 \intr \textrm{Im} \left( \big[
\omega(\ust(b_i,d_i))-\omega(\ust(a_i,d_i))\big]
\frac{\partial^2 \chi}{\partial \overline{z}^2}\right)\\
&+2 \intr \textrm{Im} \left( \omega(\ust(a_i,d_i)) \frac{\partial^2
\chi}{\partial \overline{z}^2}\right)=A_k(\tau)+B_k(\tau)+C_k(\tau).
\end{split}
\end{equation*}
We next substitute the formula given by Lemma
\ref{evolutionasymptotique} for $C_k$ in the previous equalities.
Setting
\begin{equation*}
f_{k,\chi,\varphi}(\tau)=\intr Ju_{\eps_k}(\tau)\chi +\delta \intr
\mu_{\eps_k}(\tau)\varphi-\pi \sum_{i=1}^l\Big( d_i
\chi(a_i(\tau))+\delta \varphi(a_i(\tau))\Big) ,
\end{equation*}
we obtain
\begin{equation*}
f_{k,\chi,\varphi}(t)-f_{k,\chi,\varphi}(s)=\int_s^t g_1^k+\int_s^t
A_k +\int_s^t B_k.
\end{equation*}
Lemma \ref{lemma : dissipation} with $T_\eps=T$ first gives
$|\int_s^t g_1^k(\tau)\,d\tau |\leq C|\log \eps_k| ^{-\frac{1}{2}}$
for all $k$. Moreover, it follows from Lemma \ref{lemma : estimate1}
and the fact that $\textrm{supp}\:\frac{\partial^2 \chi}{\partial
\overline{z}^2}\subset A$  that
\begin{equation*}
\limsup_{k \to +\infty} \Big|\int_s^t A_k(\tau)\, d \tau \Big|\leq C
\int_s^t h(\tau)\, d \tau.
\end{equation*}
Finally, we infer from the regularity of $\omega(\ust)$ away from
the vortices that
\begin{equation*}
 \int_s^t |B_k(\tau)|\,d \tau\leq C \int_s ^t \sigma(\tau)\, d
\tau\leq C\int_s^t h(\tau)\, d\tau.
\end{equation*}
Letting $k$ go to $+\infty$, we finally deduce from the convergence
statements in Theorem \ref{theoreme1bis} that for $0\leq s\leq t\leq
T$,
\begin{equation}
\label{r1} |f_{\chi,\varphi}(t)-f_{\chi,\varphi}(s)|\leq C \int_s^t
h(\tau)\,d\tau,
\end{equation}
where $f_{\chi,\varphi}$ is defined by
\begin{equation*}
f_{\chi,\varphi}=\pi \sum_{i=1}^l \Big[ d_i\big( \chi(b_i)-\chi(a_i)
\big) +\delta \big (\varphi(b_i)-\varphi(a_i)\big) \Big].
\end{equation*}
Here the constant $C$ depends only on $\chi$, $\varphi$ and the
initial conditions.

\medskip

We now fix a time $t\in [0,T]$ at which all the vortices $b_i$ have
a time derivative. Since the $a_i$ are $C^1$, it follows that
$f_{\chi,\varphi}$ is differentiable at $t$ with time derivative
given by
\begin{equation*}
f'_{\chi,\varphi}(t)= \pi \sum_{i=1}^l \Big( d_i
\nabla\chi(\aio)+\delta \nabla ^{\bot}\chi(\aio)\Big)\cdot \big(
\dot{b_i}(t)-\dot{a_i}(t)\big).
\end{equation*}
Dividing by $t-s$ in \eqref{r1} and letting $s\to t$ gives then
\begin{equation*}
\Big|\pi \sum_{i=1}^l \big( d_i \nabla\chi(\aio)+\delta \nabla
^{\bot}\chi(\aio)\big)\cdot \big(
\dot{b_i}(t)-\dot{a_i}(t)\big)\Big|\leq C\,h(t).
\end{equation*}
So, considering in particular $\chi,\varphi\in \mathcal{H}(\aio)$
such that $\chi$ and $\varphi$ vanish near each point $\aio$ except
for one, we obtain for all $i=1,\ldots,l$
\begin{equation*}
\Big|\pi \big(d_i \nabla\chi(\aio)+\delta \nabla
^{\bot}\chi(\aio)\big)\cdot \big(
\dot{b_i}(t)-\dot{a_i}(t)\big)\Big|\leq C\,h(t).
\end{equation*}
Choosing then successively $\chi(x)=x_1$ and $\chi(x)=x_2$ near
$\aio$ we end up with $|\dot{b_i}(t)-\dot{a_i}(t)|\leq C h(t)$, and
it follows by summation
\begin{equation*}
h'(t)\leq C h(t)\qquad \textrm{a.e. }\: t\in [0,T].
\end{equation*}
Since $h(0)=0$, this implies that $h=0$ on $[0,T]$, and hence
$\sigma=0$ on $[0,T]$.  Applying Lemma
\ref{dissipationexcesenergie}, we infer that $ \limsup_{k\to +
\infty} \Sigma_{\eps_k}(t)\leq 0$. Besides, Lemma \ref{linkenergies}
yields for all $L\geq 2^{n_1}$
\begin{equation*}
\begin{split}
\liminf_{k\to +\infty} \Sigma_{\eps_k}(t) &\geq \liminf_{k \to +
\infty} \Big( \int_{B(L)}
e_{\eps_k}\left(u_{\eps_k}(t)\right)-e_{\eps_k}\left( u_{\eps_k}
^\ast(a_i(t),d_i)\right) \Big) -\frac{C}{L}\\
&\geq -\frac{C}{L},
\end{split}
\end{equation*}
where the second inequality is a consequence of the convergence of
Jacobians on $B(L)$ stated in Lemma \ref{largerballs} (see
\cite{JS1,LX}). Letting $L$ tend to $+\infty$, we obtain
$\liminf_{k\to +\infty} \Sigma_{\eps_k}(t) \geq 0$, so we deduce
from \eqref{energieanneau} that  $(u_{\eps_k}(t))_{k\in \mathbb{N}}$
is well-prepared with respect to the configuration $(a_i(t),d_i)$.
By uniqueness of the limit, this finally holds for the full family
$(u_\eps(t))_{0<\eps<1}$ on $[0,T]$.

In conclusion, we observe that in our definition $T$ only depends on
$K_1$, $r_a$ and $\max(R,R_a+r_a)$, so that we can extend our
results to the whole of $[0,T^\ast)$ by repeating the previous
arguments.


\renewcommand{\thesection}{\textbf{\Alph{section}}}
\renewcommand{\thesubsection}{\thesection \textbf{\arabic{subsection}.}}

\renewcommand{\theequation}{\alph{equation}}

\newtheorem{propositionannexe}{\textbf{Proposition}}[section]
\newtheorem{lemmeannexe}{\textbf{Lemma}}[section]
\newcommand{\h}{h_{U_0}}
\newcommand{\f}{f_{U_0}}

\section*{Appendix}

\setcounter{section}{1} \setcounter{subsection}{1}
\setcounter{equation}{0}

We present here the proof of Theorem \ref{cauchyproblem}. We omit
the dependence on $\eps$ and rewrite \eqref{pert} in the following
way
\begin{equation}
\label{cauchy} \tag{CGL}
\begin{cases}
\dt w=(a+ib)\big(\Delta w+f_{U_0}(w)\big),\\
w(0)=w_0\in H^1(\R),
\end{cases}
\end{equation}
where
\begin{equation*}
f_{U_0}(w)=\Delta U_0+(U_0+w)(1-|U_0+w|^2),
\end{equation*}
$a$ is positive and $b\in \mathbb{R}$. We denote by $S=S(t,x)$ the
semi-group operator associated to the corresponding homogeneous
linear equation. Every solution to \eqref{cauchy} satisfies the
Duhamel formula
\begin{equation*}
w(t,\cdot)=S(t,\cdot)\ast w_0 +\int_0^t \big(S(t-s,\cdot)\ast
g_{U_0}(w(s),\cdot)\big)\,ds,
\end{equation*}
where $g_{U_0}=(a+ib) f_{U_0}$. The kernel $S$ is explicitly given
by
\begin{equation*}
S(t,x)=\frac{1}{4\pi (a+ib)t} \exp(\frac{-|x|^2}{4(a+ib)t}).
\end{equation*}
Since $a$ is positive, $S$ decays at infinity like the standard heat
Kernel. This will enable us to show that \eqref{cauchy} enjoys the
same smoothing properties as the parabolic Ginzburg-Landau equation.
In particular, we have for all $1\leq r\leq +\infty$ and for all
$t>0$
\begin{equation}
\label{decay1}
 \|S(t,\cdot)\|_{L^r(\R)}\leq
\frac{1}{t^{1-\frac{1}{r}}}
\end{equation}
 and concerning the space derivatives of $S(t)$,
\begin{equation}
\label{decay2} \|D^k S(t,\cdot)\|_{L^r(\R)} \leq
\frac{C(a,b)}{t^{\frac{|k|}{2}+1-\frac{1}{r}}}.
\end{equation}
We will often use Young's inequality that gives for $f\in L^p(\R)$
and $g\in L^q(\R)$ $\|f\ast g\|_{L^r(\R)}\leq \|f\|_{L^p(\R)}
\|g\|_{L^q(\R)}$, where $1+\frac{1}{r}=\frac{1}{p}+\frac{1}{q}$.
 We first state
local well-posedness for \eqref{cauchy}.

\begin{propositionannexe}
\label{localexistence}
 Let $w_0 \in H^1(\R)$. Then there exists a positive
time $T^\ast$ depending on $\|w_0\|_{H^1}$ and a unique solution
$w\in C^0([0,T^\ast),H^1(\R))$ to \eqref{cauchy}.
\end{propositionannexe}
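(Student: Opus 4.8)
The plan is to solve the Duhamel (integral) formulation by a contraction argument in $X_T := C^0([0,T],H^1(\R))$ for $T$ small, depending on $\|w_0\|_{H^1}$, and to take $T^\ast=T$. Define
$\Phi(w)(t) := S(t,\cdot)\ast w_0 + \int_0^t S(t-s,\cdot)\ast g_{U_0}(w(s),\cdot)\,ds$
with $g_{U_0}=(a+ib)f_{U_0}$, and look for a fixed point of $\Phi$ in a closed ball $B_R=\{w\in X_T:\ \sup_{[0,T]}\|w(t)\|_{H^1}\leq R\}$ with $R$ of order $1+\|w_0\|_{H^1}$.

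The key preliminary step is to check that the nonlinearity lands in $L^2(\R)$. Writing $v=U_0+w$ and expanding $1-|v|^2=(1-|U_0|^2)-2\,U_0\cdot w-|w|^2$: the term $\Delta U_0$ lies in $L^2$ by the very definition of $\mathcal{V}$; the terms of $(U_0+w)(1-|v|^2)$ that are linear in $w$ are products of $w$ with $U_0\in L^\infty$ and with $1-|U_0|^2$, which lies in $L^q$ for every $q\in[2,+\infty]$ (it is bounded since $U_0\in L^\infty$, and it is in $L^2$), so they belong to $L^2$ thanks to the two-dimensional embedding $H^1(\R)\hookrightarrow L^p(\R)$ for all $p<+\infty$; the terms quadratic and cubic in $w$ belong to $L^2$ using $w\in L^4$ and $w\in L^6$ respectively, together with $U_0\in L^\infty$. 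This yields $\|g_{U_0}(w)\|_{L^2}\leq C(1+\|w\|_{H^1}^3)$ and, the $\Delta U_0$ term cancelling in a difference, the Lipschitz bound $\|g_{U_0}(w_1)-g_{U_0}(w_2)\|_{L^2}\leq C(1+\|w_1\|_{H^1}^2+\|w_2\|_{H^1}^2)\|w_1-w_2\|_{H^1}$, where $C$ depends only on the fixed data.

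Next I would combine this with the kernel estimates \eqref{decay1}--\eqref{decay2}. Young's inequality and $\|S(t,\cdot)\|_{L^1}\leq 1$ give $\|S(t,\cdot)\ast w_0\|_{H^1}\leq\|w_0\|_{H^1}$ (placing the spatial derivative on $w_0$), while $\|S(\tau,\cdot)\ast g\|_{L^2}\leq\|g\|_{L^2}$ and $\|\nabla(S(\tau,\cdot)\ast g)\|_{L^2}=\|(\nabla S(\tau,\cdot))\ast g\|_{L^2}\leq\|\nabla S(\tau,\cdot)\|_{L^1}\|g\|_{L^2}\leq C\tau^{-1/2}\|g\|_{L^2}$. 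Since $\tau\mapsto\tau^{-1/2}$ is integrable near $0$, for $w,w_1,w_2\in B_R$ one obtains $\sup_{[0,T]}\|\Phi(w)(t)\|_{H^1}\leq\|w_0\|_{H^1}+C(T+\sqrt T)(1+R^3)$ and $\sup_{[0,T]}\|\Phi(w_1)(t)-\Phi(w_2)(t)\|_{H^1}\leq C(T+\sqrt T)(1+R^2)\sup_{[0,T]}\|w_1(t)-w_2(t)\|_{H^1}$; continuity in $t$ with values in $H^1$ of $\Phi(w)$ is standard once one knows $g_{U_0}(w(\cdot))\in C^0([0,T],L^2)$. Choosing $R=1+2\|w_0\|_{H^1}$ and then $T$ small enough makes $\Phi$ a self-map of $B_R$ and a strict contraction there, so Banach's fixed point theorem yields a unique $w\in B_R$ solving \eqref{cauchy}; continuity at $t=0$ follows from $S(t,\cdot)\ast w_0\to w_0$ in $H^1$ as $t\to 0^+$ (the kernel $S$ being an $H^1$-approximate identity, exactly as the heat kernel) and the Duhamel integral tending to $0$.

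Finally, to upgrade uniqueness from $B_R$ to the whole class $C^0([0,T^\ast),H^1(\R))$, I would use a standard continuation argument: given two solutions $w_1,w_2$ there, on any subinterval $[0,\tau]$ on which both stay in a common ball the same estimates give $\sup_{[0,\tau]}\|w_1-w_2\|_{H^1}\leq C(\tau+\sqrt\tau)\sup_{[0,\tau]}\|w_1-w_2\|_{H^1}$, forcing $w_1\equiv w_2$ for $\tau$ small, and this propagates by iteration. I expect the only genuinely delicate point to be the second step: verifying that $\Delta U_0+(U_0+w)(1-|U_0+w|^2)$ really lies in $L^2(\R)$ using \emph{only} the defining properties of $\mathcal{V}$ (in particular $\nabla U_0$ need not belong to $L^2$, so it must not enter the estimates). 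Once that is in place, the remainder is the usual parabolic fixed-point machinery, made available here by the heat-like decay of $S$ recorded in \eqref{decay1}--\eqref{decay2}.
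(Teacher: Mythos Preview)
Your proposal is correct and follows essentially the same route as the paper's proof: a Banach fixed-point argument for the Duhamel map in $C^0([0,T],H^1)$, using the $L^2$ bounds on $f_{U_0}(w)$ (which the paper obtains by citing Lemma~1 of \cite{BS}, while you spell out the expansion and the two-dimensional embedding $H^1\hookrightarrow L^p$ directly) together with the kernel estimates \eqref{decay1}--\eqref{decay2} at $r=1$ to produce the $(T+\sqrt{T})$ smallness factor. Your added remarks on continuity at $t=0$ and on upgrading uniqueness from the ball $B_R$ to all of $C^0([0,T^\ast),H^1)$ are details the paper leaves implicit, but the core argument is the same.
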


\begin{proof}
We intend to apply the fixed point theorem to the map $\psi:w\in
H^1(\R)\mapsto \psi(w)$, where
\begin{equation*}
\psi(w)(t)= S(t)\ast w_0+\int_0^t S(t-s)\ast g_{U_0}(w(s))\,ds.
\end{equation*}
To this aim, we introduce $R=\|w_0\|_{H^1(\R)}$ and for $T>0$
\begin{equation*}
B(T,R)=\{w\in L^{\infty}([0,T],H^1(\R)) \:\textrm{s.t.}\:
\|w\|_{L^\infty(H^1)}\leq 3R\}.
\end{equation*}
We next show that we can choose $T=T(R)$ so that $\psi$ maps
$B(T(R),R)$ into itself and is a contraction on this ball.

For $T>0$, we let $w\in B(T,R)$ and expand $f_{U_0}(w)$. Using that
$H^1(\R)$ is continuously embedded in $L^p(\R)$ for all $2\leq
p<+\infty$ and the fact that $U_0$ belongs to $\mathcal{V}$, it can
be shown that \footnote{see Lemma 1 in \cite{BS}.}
\begin{equation}
\label{contraction1} \|\f\|_{L^\infty([0,T],L^2)}\leq C(U_0,R),
\end{equation}
and for $w_1,w_2\in B(T,R)$
\begin{equation}
\label{contraction2}
\|f_{U_0}(w_1)-f_{U_0}(w_2)\|_{L^\infty([0,T],L^2)}\leq
C(U_0,R)\|w_1-w_2\|_{L^\infty([0,T],H^1)}.
\end{equation}
We next apply Young's inequality to obtain
\begin{equation*}
\begin{split}
\|\psi(w)(t)\|_{H^1}&\leq
\|\psi(w)(t)\|_{L^2}+\|\nabla\psi(w)(t)\|_{L^2}\\
 &\leq2 \|S(t)\|_{L^1}\|w_0\|_{H^1}+\int_0^t
\|S(t-s)+\nabla S(t-s)\|_{L^1}\| g_{U_0}(s)\|_{L^2}\,ds\\
&\leq 2\|w_0\|_{H^1}+C \int_0^t \left(1+(t-s)^{-\frac{1}{2}}\right)
\| g_{U_0}(w(s))\|_{L^2}\,ds,
\end{split}
\end{equation*}
where the last inequality is a consequence of \eqref{decay1} and
\eqref{decay2} with the choice $r=1$.  This yields according to
\eqref{contraction1} and \eqref{contraction2}
\begin{equation*}
\sup_{t\in[0,T]}\|\psi(w)(t)\|_{H^1}\leq
2\|w_0\|_{H^1}+C(U_0,R)(T+\sqrt{T})
\end{equation*}
and similarly,
\begin{equation*}
\sup_{t\in[0,T]}\|\psi(w_1)(t)-\psi(w_2)(t)\|_{H^1}\leq
C'(U_0,R)(T+\sqrt{T})\sup_{t\in[0,T]}\|w_1(t)-w_2(t)\|_{H^1}.
\end{equation*}
The conclusion follows by choosing $T=T(R)$ sufficiently small so
that $2\|w_0\|_{H^1}+C(U_0,R)(T+\sqrt{T})\leq 3R$ and
$C'(U_0,R)(T+\sqrt{T})<1$.
\end{proof}
We next show additional regularity for a solution to \eqref{cauchy}.

\begin{lemmeannexe}
\label{reg} Let $w\in C^0([0,T],H^1(\R))$ be a solution to
\eqref{cauchy}. Then $w$ belongs to $L_{\loc}^1([0,T],H^2(\R))\cap
C^0((0,T],H^2(\R))$ and therefore to
$L_{\loc}^1([0,T],L^\infty(\R))$.
\end{lemmeannexe}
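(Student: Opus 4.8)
The plan is to run a parabolic bootstrap on the Duhamel formula. Writing $w=\psi(w)$ as in the proof of Proposition \ref{localexistence}, so that for $t>0$
\[
w(t)=S(t)\ast w_0+\int_0^t S(t-s)\ast g_{U_0}(w(s))\,ds,
\]
I first record, exactly as there (via \eqref{contraction1}), that $s\mapsto g_{U_0}(w(s))$ is bounded in $L^\infty([0,T],L^2(\R))$. The first goal is to recover sub-critical spatial regularity. For $2<p<\infty$, differentiating the Duhamel formula and applying Young's inequality together with \eqref{decay1}--\eqref{decay2} (with H\"older exponents $1+\tfrac1p=\tfrac1r+\tfrac12$) gives
\[
\|w(t)\|_{W^{1,p}}\le C\,t^{-(\frac12-\frac1p)}\|w_0\|_{H^1}+C\int_0^t (t-s)^{-(1-\frac1p)}\,\|g_{U_0}(w(s))\|_{L^2}\,ds,
\]
and both time weights are integrable near the origin because $p<\infty$. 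Hence $w\in L^1_{\loc}([0,T],W^{1,p}(\R))\cap C^0((0,T],W^{1,p}(\R))$ for every $p<\infty$, the continuity coming from dominated convergence in the Duhamel integral and continuity of translations in $L^p$. By the two-dimensional embedding $W^{1,p}(\R)\hookrightarrow L^\infty(\R)$ with $p>2$, this already proves the last assertion $w\in L^1_{\loc}([0,T],L^\infty(\R))$, and it gives $\|w(s)\|_{L^\infty}\le C(1+s^{-\beta})$ for $s>0$ with $\beta>0$ as small as we wish.

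The second goal is to push the regularity up to $H^2$. Using the explicit form of $f_{U_0}$ together with the identity
\[
\nabla\big[(U_0+w)(1-|U_0+w|^2)\big]=(\nabla U_0+\nabla w)\,(1-|U_0+w|^2)-(U_0+w)\,\nabla|U_0+w|^2,
\]
the properties of $U_0\in\mathcal{V}$ (so that $U_0$ and $1-|U_0|^2$ lie in $L^\infty$, hence $1-|U_0|^2\in L^4$; that $\nabla|U_0|,\nabla^2U_0,\nabla^3U_0\in L^2$; and that $\nabla U_0\in L^4$ by Gagliardo--Nirenberg), the Sobolev bounds $w(s)\in L^q(\R)$ for all $q<\infty$, and the $L^\infty$ bound on $w(s)$ obtained above, one checks that every term of $\nabla g_{U_0}(w(s))$ lies in $L^2(\R)$, so that $g_{U_0}(w(s))\in H^1(\R)$ with $\|g_{U_0}(w(s))\|_{H^1}\le C(1+s^{-\beta'})$ for some small $\beta'>0$. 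The only delicate terms are those carrying a bare factor $\nabla U_0\in L^4\setminus L^2$; each of them is multiplied by an $L^4$ factor (a power of $1-|U_0+w|^2$ or of $w$), and it is precisely $w\in L^\infty$ that puts $\nabla|w|^2$ into $L^2$. Writing then $\nabla^2 S(t-s)\ast g(s)=\nabla S(t-s)\ast\nabla g(s)$ and using \eqref{decay2} once more,
\[
\|w(t)\|_{H^2}\le C\,t^{-\frac12}\|w_0\|_{H^1}+C\int_0^t (t-s)^{-\frac12}\,\|\nabla g_{U_0}(w(s))\|_{L^2}\,ds\le C\big(t^{-\frac12}+t^{\frac12-\beta'}\big),
\]
which is integrable on $[0,T]$; hence $w\in L^1_{\loc}([0,T],H^2(\R))$, and running the same estimate on $[\eta,T]$ for $\eta>0$, where the right-hand side is bounded, together with continuity of translations, gives $w\in C^0((0,T],H^2(\R))$. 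The inclusion $w\in L^1_{\loc}([0,T],L^\infty(\R))$ then follows also from $H^2(\R)\hookrightarrow L^\infty(\R)$.

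The step I expect to be the real obstacle is the passage to $H^2$: it lies exactly at the borderline of parabolic scaling, so the crude bound from $g_{U_0}(w)\in L^\infty_tL^2_x$ alone only produces the non-integrable weight $(t-s)^{-1}$. One must therefore first use the sub-critical gain $w\in L^\infty$, then exploit the precise algebraic form of $f_{U_0}$ — in particular that the only non-$L^2$ ingredient of $U_0$, namely $\nabla U_0$, always occurs multiplied by an $L^4$ factor, so that $\nabla g_{U_0}(w)\in L^2$ — and only afterwards transfer one derivative onto the kernel. (Alternatively one could invoke maximal $L^p$-parabolic regularity for the analytic semigroup $S$, but the argument above stays within the elementary kernel bounds \eqref{decay1}--\eqref{decay2}.)
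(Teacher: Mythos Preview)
Your argument is correct in outline but takes a different route from the paper. The paper dispenses with the $L^\infty$ bootstrap entirely: invoking Lemma~2 of \cite{BS}, it shows directly from $w\in C^0([0,T],H^1)$ that $\partial_i f_{U_0}(w)=g_1+g_2\in L^\infty_tL^2_x+L^\infty_tL^r_x$ for any $1<r<2$, then moves one derivative onto the kernel and applies Young with $\nabla S(t-s)\in L^1$ against $g_1$ and $\nabla S(t-s)\in L^\alpha$ (where $\tfrac1\alpha+\tfrac1r=\tfrac32$) against $g_2$, producing the integrable singularities $(t-s)^{-1/2}$ and $(t-s)^{-1/r}$. This is a one--step argument. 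You instead first bootstrap $w$ into $W^{1,p}\hookrightarrow L^\infty$ (a genuine sub-critical gain), and then use $w\in L^\infty$ to place $\nabla g_{U_0}(w)$ entirely in $L^2$, so that only $\nabla S\in L^1$ is needed in the endgame. Both approaches work; the paper's is shorter, while yours makes the borderline nature of the $H^2$ estimate more visible.

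One point in your Step~2 needs repair. The assertion that every bare occurrence of $\nabla U_0$ is multiplied by an $L^4$ factor ``a power of $1-|U_0+w|^2$ or of $w$'' is false: expanding $(U_0+w)\nabla|U_0+w|^2$ produces the term $-2(U_0+w)\big(U_0\cdot\nabla U_0\big)$, where the prefactor $(U_0+w)U_0$ lies only in $L^\infty$, not in $L^4$. This term is nevertheless in $L^2$, but for a different reason: the pointwise identity $U_0\cdot\nabla U_0=|U_0|\,\nabla|U_0|$ together with $\nabla|U_0|\in L^2$ (which you listed among the properties of $\mathcal V$ but did not use). With this correction your claim $\nabla g_{U_0}(w(s))\in L^2$ holds, and the rest of the argument goes through as written.
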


\begin{proof}
We first differentiate $\f(w)$ and use Lemma 2 in \cite{BS} which
states by mean of various Sobolev embeddings, H\"older and
Gagliardo-Nirenberg inequalities that
\begin{equation*}
\partial_i \f(w)=g_1(w)+g_2(w) \in
L^\infty([0,T],L^2(\R))+L^\infty([0,T],L^r(\R))
\end{equation*}
for all $1<r<2$. Moreover, we have
\begin{equation*}
\sup_{s\in [0,T]}\|g_1(w)(s)\|_{L^2(\R)}+\|g_2(w)(s)\|_{L^r(\R)}
\leq C(U_0,A(T),r),
\end{equation*}
where $ A(T)=\sup_{s\in[0,T]} \|w(s)\|_{H^1(\R)}$. Next,
differentiating twice Duhamel formula gives
\begin{equation*}
\partial_{ij}w(t)=\partial_j S(t)\ast \partial_i w_0+\int_0^t
\partial_jS(t-s)\ast \partial_i \f(s)\,ds,
\end{equation*}
so taking into account the decomposition $\partial_i
f_{U_0}=g_1+g_2$ we get
\begin{equation*}
\begin{split}
\|\partial_{ij}w(t)\|_{L^2}&\leq \|\nabla S(t)\|_{L^1} \|\nabla
w_0\|_{L^2}+\int_0^t \|\nabla
S(t-s)\|_{L^1}\|g_1(s)\|_{L^2}\,ds\\
&+\int_0^t \|\nabla S(t-s)\|_{L^\alpha}\|g_2(s)\|_{L^r}\,ds,
\end{split}
\end{equation*}
where $\alpha$ is chosen so that $
1+\frac{1}{2}=\frac{1}{\alpha}+\frac{1}{r}$. This finally yields in
view of \eqref{decay2}
\begin{equation*}
\begin{split}
\|\partial_{ij}w(t)\|_{L^2}\leq \frac{C}{t^{\frac{1}{2}}}\|
w_0\|_{H^1}+C(U_0,A(T),r)\int_0^t
\left((t-s)^{-\frac{1}{2}}+(t-s)^{-\frac{1}{2}-1+\frac{1}{\alpha}}\right)\,ds.
\end{split}
\end{equation*}
Since $\frac{1}{2}+1-\frac{1}{\alpha}=\frac{1}{r}<1$, we conclude
that the right-hand side is finite, so that $\partial_{ij} w(t) \in
L^2(\R)$.
\end{proof}
Lemma \ref{reg} enables to show that the renormalized energy is
non-increasing and to establish a control of the growth of
$\|w(t)\|_{H^1(\R)}$. For equation \eqref{cauchy}, this energy is
given by
\begin{equation*}
E_{U_0}(w)(t)=\int_{\R} \frac{|\nabla w|^2}{2}-\int_{\R} \Delta U_0
\cdot w+\int_{\R} \frac{(1-|U_0+w|^2)^2}{4}.
\end{equation*}
It is well-defined and continuous in time for $w\in C^0(H^1(\R))$.

\begin{lemmeannexe}
\label{control} Let $w\in C^0([0,T),H^1(\R))$ be a solution to
\eqref{cauchy}. Then for all $t\in(0,T)$ we have
\begin{equation*}
\frac{d}{dt} E_{U_0}(w)(t)\leq 0.
\end{equation*}
Moreover, there exists $C$ depending only on $\|w_0\|_{H^1}$ and
$U_0$ such that
\begin{equation}
\label{i3} \|w(t)\|_{H^1}\leq \|w_0\|_{H^1}\exp(Ct),\qquad \forall
t\in[0,T).
\end{equation}
\end{lemmeannexe}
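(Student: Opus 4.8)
The plan is to observe that equation \eqref{cauchy} is, up to multiplication by the constant $(a+ib)$, the negative $L^2$-gradient flow of the functional $E_{U_0}$, to deduce from this the energy decay, and then to combine the energy decay with a direct $L^2$ estimate and Gronwall's lemma in order to control $\|w(t)\|_{H^1}$.

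First I would prove $\frac{d}{dt}E_{U_0}(w)\le 0$. Put $g:=\Delta w+f_{U_0}(w)$, so that $\dt w=(a+ib)g$ and, by an elementary variation, $g=-\nabla_{L^2}E_{U_0}(w)$. By Lemma~\ref{reg} the solution satisfies $w(t)\in H^2(\R)$ and $\dt w(t)\in L^2(\R)$ for $t\in(0,T)$, while $t\mapsto E_{U_0}(w)(t)$ is continuous on $[0,T)$; so for $t\in(0,T)$ one may differentiate $E_{U_0}(w)$ and integrate by parts, all boundary terms vanishing because $w(t),\nabla w(t),D^2w(t)\in L^2(\R)$ and $w(t)\in L^\infty(\R)$ (by the embedding $H^2(\R)\hookrightarrow L^\infty(\R)$). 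This yields
\begin{equation*}
\frac{d}{dt}E_{U_0}(w)(t)=-\intr\big(\Delta w+f_{U_0}(w)\big)\cdot\dt w=-\intr g\cdot(a+ib)g=-a\intr|g|^2\le 0,
\end{equation*}
using the real-scalar-product identity $\intr h\cdot(ih)=0$ and $a>0$. By continuity, $E_{U_0}(w)(t)\le E_{U_0}(w)(0)=:E_0$ for all $t\in[0,T)$, which is the first assertion.

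For the growth bound \eqref{i3}, I would first reduce it to a bound on $\|w(t)\|_{L^2}$ alone. Since $\big|\intr\Delta U_0\cdot w\big|\le\|\Delta U_0\|_{L^2}\|w\|_{L^2}$ and the potential term in $E_{U_0}(w)$ is nonnegative, the inequality $E_{U_0}(w)(t)\le E_0$ gives, for every $t\in[0,T)$,
\begin{equation*}
\|\nabla w(t)\|_{L^2}^2\le 2E_0+2\|\Delta U_0\|_{L^2}\|w(t)\|_{L^2},\qquad \big\|1-|U_0+w(t)|^2\big\|_{L^2}^2\le 4E_0+4\|\Delta U_0\|_{L^2}\|w(t)\|_{L^2},
\end{equation*}
whence $\|w(t)\|_{H^1}^2\le C(1+\|w(t)\|_{L^2}^2)$ with $C=C(U_0,E_0)$. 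It is therefore enough to show that $\|w(t)\|_{L^2}$ grows at most exponentially.

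To that end I would differentiate $\|w\|_{L^2}^2$ and substitute the equation,
\begin{equation*}
\frac{1}{2}\frac{d}{dt}\|w\|_{L^2}^2=\intr w\cdot(a+ib)\Big(\Delta w+\Delta U_0+(U_0+w)\big(1-|U_0+w|^2\big)\Big),
\end{equation*}
and expand. The term $a\intr w\cdot\Delta w=-a\|\nabla w\|_{L^2}^2$ is kept; the term $b\intr w\cdot(i\Delta w)$ vanishes after rewriting it as $b\intr\mathrm{Im}\big(\nabla\cdot(w\overline{\nabla w})\big)$ and integrating (legitimate since $w\overline{\nabla w}\in W^{1,1}(\R)$); the $\Delta U_0$-contributions are $\le(a+|b|)\|\Delta U_0\|_{L^2}\|w\|_{L^2}$; and in the cubic term the pointwise identity $w\cdot(iw)=0$ removes the part of $w\cdot i(U_0+w)$ that is quadratic in $w$. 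There remain the genuinely nonlinear contributions $a\intr|w|^2(1-|U_0+w|^2)$, $a\intr(w\cdot U_0)(1-|U_0+w|^2)$ and $b\intr(1-|U_0+w|^2)(w\cdot iU_0)$, which I would estimate by $a\|w\|_{L^4}^2\big\|1-|U_0+w|^2\big\|_{L^2}$ and, for the latter two, by $C(a,b)\|U_0\|_{L^\infty}\|w\|_{L^2}\big\|1-|U_0+w|^2\big\|_{L^2}$. The main obstacle lies here: a crude bound on these terms would leave a cubic (or higher) power of $\|w\|_{L^2}$ on the right-hand side, which is insufficient to exclude finite-time blow-up in the Gronwall step. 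The way around it is to feed in the a priori estimates of the previous paragraph, which force $\|\nabla w(t)\|_{L^2}$ and $\big\|1-|U_0+w(t)|^2\big\|_{L^2}$ each to be $\le C(1+\|w(t)\|_{L^2})^{1/2}$, together with the two-dimensional Gagliardo--Nirenberg inequality $\|w\|_{L^4}^2\le C\|w\|_{L^2}\|\nabla w\|_{L^2}$; each of the three contributions then becomes at most quadratic in $\|w(t)\|_{L^2}$. Collecting all terms gives $\frac{d}{dt}\|w(t)\|_{L^2}^2\le C(1+\|w(t)\|_{L^2}^2)$ on $(0,T)$ with $C=C(U_0,E_0)$, so that Gronwall's lemma together with $\|w(0)\|_{L^2}\le\|w_0\|_{H^1}$ gives $\|w(t)\|_{L^2}^2\le(\|w_0\|_{H^1}^2+1)e^{Ct}$; since $E_0$ is itself bounded in terms of $\|w_0\|_{H^1}$ and $U_0$, combining with the reduction above produces the desired bound \eqref{i3}.
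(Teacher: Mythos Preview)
Your argument is correct and follows essentially the same route as the paper: energy decay via the gradient-flow structure, then an $L^2$ differential inequality closed by Gronwall, then the energy bound to upgrade $L^2$ to $H^1$. The only noteworthy difference is in estimating $a\intr|w|^2(1-|U_0+w|^2)$: you invoke H\"older and Gagliardo--Nirenberg together with the a priori bounds on $\|\nabla w\|_{L^2}$ and $\|1-|U_0+w|^2\|_{L^2}$, whereas the paper simply uses the pointwise inequality $1-|U_0+w|^2\le 1$ to bound this term directly by $a\|w\|_{L^2}^2$, which is shorter and avoids the interpolation step.
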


\begin{proof}
We infer from equation \eqref{cauchy} and Lemma \ref{reg} that $\dt
w$ belongs to $L^\infty_{\loc}((0,T],L^2(\R))$, so that we may
compute
\begin{equation*}
\begin{split}
\frac{d}{dt} E_{U_0}(w(t))&=\int_{\R} \nabla w \cdot \nabla \dt
w-\Delta U_0\cdot \dt w -\dt w\cdot (U_0+w)(1-|U_0+w|^2)\\
&=-\int_{\R}\dt w\cdot ( \Delta w+\f(w))\\
&=-\int_{\R}\dt w\cdot (\frac{1}{a+ib}\dt
w)=\frac{-a}{a^2+b^2}\int_{\R} |\dt w|^2\leq 0.
\end{split}
\end{equation*}
We now turn to \eqref{i3}. We compute for $t\in(0,T)$
\begin{equation*}
\begin{split}
\frac{1}{2}\frac{d}{dt} \|w(t)\|_{L^2(\R)}^2&=\int_{\R} w\cdot \dt
w=\int_{\R} w\cdot [(a+ib)\Delta w]+\int_{\R} w \cdot
[(a+ib)\f(w)]\\
&=-a\int_{\R}|\nabla w|^2 +\int_{\R} w \cdot (a+ib)\Delta
U_0\\
&\hspace{1em}+\int_{\R} w \cdot [(a+ib)(U_0+w)(1-|U_0+w|^2)].
\end{split}
\end{equation*}
We then split the last term in the previous equality as
\begin{equation*}
\begin{split}
\int_{\R} w \cdot [(a+ib)(U_0+w)&(1-|U_0+w|^2)]=\int_{\R} w \cdot
[(a+ib)U_0(1-|U_0+w|^2)]\\
&+a\int_{\R} |w|^2(1-|U_0+w|^2).
\end{split}
\end{equation*}
The second term in the r.h.s. is clearly bounded by
$a\|w(t)\|_{L^2(\R)}$. Using Cauchy-Schwarz inequality for the first
one, we obtain
\begin{equation*}
\begin{split}
\int_{\R} w \cdot [(a+ib)(U_0+w)(1-|U_0+w|^2)] \leq
C(U_0)&\|w(t)\|_{L^2} V(t)^{\frac{1}{2}}+a\|w(t)\|_{L^2}^2,
\end{split}
\end{equation*}
where $V(t)=\int_{\R} (1-|U_0+w(t)|^2)^2$. We are led to
\begin{equation}
\label{i1} \frac{d}{dt} \|w(t)\|_{L^2(\R)}^2\leq
C(U_0)(\|w(t)\|_{L^2}^2+1+V(t)).
\end{equation}
On the other hand, Cauchy-Schwarz inequality gives
\begin{equation*}
E_{U_0}(w)(t)\geq \int_{\R} \frac{|\nabla
w|^2}{2}\,dx-C(U_0)\|w(t)\|_{L^2}+\frac{V(t)}{4},
\end{equation*}
which yields, since $E_{U_0}$ is non-increasing,
\begin{equation}
\label{i2} \frac{V(t)}{4}+\int_{\R} \frac{|\nabla w|^2}{2}\leq
E_{U_0}(w_0)+C(U_0)\|w(t)\|_{L^2}.
\end{equation}
We infer from \eqref{i1} and \eqref{i2}
\begin{equation*}
\|w(t)\|_{L^2}\leq (1+\|w_0\|_{H^1})\exp(Ct)
\end{equation*}
and finally deduce \eqref{i3} by using \eqref{i2} once more.
\end{proof}
Lemma \ref{control} provides global well-posedness for
\eqref{cauchy}.
\begin{propositionannexe}
Let $w_0\in H^1(\R)$. Then there exists a unique and global solution
$w\in C^0(\mathbb{R}_+,H^1(\R))$ to \eqref{cauchy}.
\end{propositionannexe}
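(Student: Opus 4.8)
The plan is to derive global existence from the local existence statement of Proposition \ref{localexistence} together with the a priori bound \eqref{i3} of Lemma \ref{control}, via a standard continuation argument, and to obtain global uniqueness from the local uniqueness already granted.

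First I would introduce the maximal solution. Let $T_{\max}\in(0,+\infty]$ be the supremum of all $T>0$ for which \eqref{cauchy} admits a solution in $C^0([0,T],H^1(\R))$. Two such solutions coincide on the overlap of their intervals of definition: the set on which they agree is closed and nonempty, and it is also open, by applying the local uniqueness of Proposition \ref{localexistence} after a translation in time; hence it is the whole overlap. Therefore the solutions patch together into a single maximal solution $w\in C^0([0,T_{\max}),H^1(\R))$, and it remains to prove that $T_{\max}=+\infty$.

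Second I would argue by contradiction: suppose $T_{\max}<+\infty$. By \eqref{i3} we have $\|w(t)\|_{H^1(\R)}\leq \|w_0\|_{H^1}\exp(CT_{\max})=:M$ for every $t\in[0,T_{\max})$, so the $H^1$ norm stays uniformly bounded up to the maximal time. Proposition \ref{localexistence} furnishes a time $\tau=\tau(M)>0$, depending only on $M$, such that \eqref{cauchy} with \emph{any} initial datum of $H^1$ norm at most $M$ has a solution on $[0,\tau]$. Choosing $t_0\in(T_{\max}-\tau,T_{\max})$, solving \eqref{cauchy} with datum $w(t_0)$ on $[0,\tau]$, and then concatenating with $w$ on $[0,t_0]$ (the two pieces agreeing on their overlap by uniqueness) yields a solution on $[0,t_0+\tau]$ with $t_0+\tau>T_{\max}$, contradicting the definition of $T_{\max}$. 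Hence $T_{\max}=+\infty$, the solution is global, and its uniqueness on all of $\mathbb{R}_+$ follows from the uniqueness on each compact subinterval.

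I do not anticipate a genuine obstacle: the analytic content is entirely contained in Proposition \ref{localexistence} (the fixed-point construction, with existence time a function of $\|w_0\|_{H^1}$ only) and in Lemma \ref{control} (the monotonicity of $E_{U_0}$ and the resulting Gronwall-type bound \eqref{i3}). The one point to keep in mind is that the continuation step really does require the local existence time to depend on the $H^1$ norm alone, which is precisely how Proposition \ref{localexistence} is phrased.
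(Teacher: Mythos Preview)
Your proposal is correct and follows essentially the same route as the paper: define the maximal solution, suppose its lifespan $T_{\max}$ is finite, invoke the a priori bound \eqref{i3} from Lemma \ref{control} to keep $\|w(t)\|_{H^1}$ bounded up to $T_{\max}$, and then use that the local existence time in Proposition \ref{localexistence} depends only on $\|w_0\|_{H^1}$ to extend past $T_{\max}$, reaching a contradiction. The paper's version is terser and leaves the patching/uniqueness details implicit, but the argument is the same.
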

\begin{proof}
Let $w\in C^0([0,T^\ast),H^1(\R))$ be the unique maximal solution
with initial condition $w_0$. If $T^\ast$ is finite, we have
according to \eqref{i3}
\begin{equation*}
\limsup_{t\to T^\ast} \|w(t)\|_{H^1(\R)}\leq
C(U_0,T^\ast,w_0)<+\infty,
\end{equation*}
so that we can extend $w$ to a solution $\overline{w}$ on
$[0,T^\ast+\delta]$. This yields a contradiction.
\end{proof}
We conclude this section with the following

\begin{propositionannexe}
\label{reg2} Let $w\in C^0(\mathbb{R}_+,H^1(\R))$ be the solution to
\eqref{cauchy}. Then we have $w\in
C^\infty(\mathbb{R}_+^\ast,C^\infty(\R))$.
\end{propositionannexe}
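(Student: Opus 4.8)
The plan is a parabolic bootstrap run on the Duhamel formula restarted from a positive time, in the spirit of Lemma \ref{reg}, of which the argument is essentially an iteration: the kernel $S$ obeys the same decay \eqref{decay1}--\eqref{decay2} as the heat kernel (since $a>0$), while $g_{U_0}(w)$ is cubic in $w$ with coefficients built from the smooth map $U_0\in\mathcal{V}$, for which $\nabla^k\Delta U_0=\nabla^{k+2}U_0\in L^2$, $\nabla^k U_0\in L^\infty\cap L^2$ for $k\geq 2$, and $\nabla U_0\in L^\infty\cap L^p$ for every $p>2$. First I would prove, by induction on $m$, that $w\in C^0((0,\infty),H^m(\R))$ for all $m\geq 2$, the base case being Lemma \ref{reg}. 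Assuming it for some $m\geq 2$, fix $0<t_0<t\leq T$ and differentiate $m+1$ times in space the Duhamel identity written from $t_0$, one derivative on the kernel and $m$ on the data:
\[
\nabla^{m+1}w(t)=\nabla S(t-t_0)\ast\nabla^m w(t_0)+\int_{t_0}^t\nabla S(t-s)\ast\nabla^m g_{U_0}(w(s))\,ds.
\]
Expanding $\nabla^m g_{U_0}(w(s))$ by Leibniz and using $H^j(\R)\hookrightarrow L^q$ for $q<\infty$ (and $\hookrightarrow L^\infty$ for $j\geq 2$), it decomposes as $G_1(s)+G_2(s)$ with $G_1\in L^2$, $G_2\in L^r$ for a fixed $r\in(1,2)$, and $\sup_{[t_0,T]}(\|G_1\|_{L^2}+\|G_2\|_{L^r})<\infty$ — this is the decomposition $\partial_i\f(w)=g_1+g_2$ of Lemma \ref{reg}, now with $m$ derivatives. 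Young's inequality together with \eqref{decay1}--\eqref{decay2} then bounds the right-hand side by $C(t-t_0)^{-1/2}\|w(t_0)\|_{H^m}+C\int_{t_0}^t\big((t-s)^{-1/2}+(t-s)^{-1/r}\big)\,ds<\infty$ (with $\tfrac1\alpha=\tfrac32-\tfrac1r$ in Young for the $G_2$ term), so $w(t)\in H^{m+1}(\R)$; continuity in $t$ follows from the same estimates and dominated convergence. Since $t_0>0$ is arbitrary, $w\in C^0((0,\infty),H^m)$ for all $m$, and by $H^m(\R)\hookrightarrow C^{m-2}(\R)$ we get $w(t)\in C^\infty(\R)$ for each $t>0$, with $w$ and all its spatial derivatives continuous on $(0,\infty)\times\R$.

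To reach joint regularity I would use the equation: on any cylinder $(t_0,T)\times B_R$, $w$ solves $\dt w-(a+ib)\Delta w=g_{U_0}(w)$, a constant-coefficient parabolic equation (the symbol $(a+ib)|\xi|^2$ has real part $a|\xi|^2>0$) whose right-hand side is, by the previous step, continuous in $t$ with values in $C^\infty(B_R)$, uniformly bounded in every $C^k(B_R)$. Interior parabolic regularity, bootstrapped on the regularity of $g_{U_0}(w)$ — which depends smoothly on $w$ and its spatial derivatives, all now known to be smooth — upgrades $w$ successively to $C^1_t$, $C^2_t$, $\dots$, with $C^\infty_x$ values, giving $w\in C^\infty(\mathbb{R}_+^\ast,C^\infty(\R))$, as claimed. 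Alternatively one may differentiate the $t_0$-restarted Duhamel formula in time, using $\dt S=(a+ib)\Delta S$ after splitting $\int_{t_0}^t=\int_{t_0}^{(t+t_0)/2}+\int_{(t+t_0)/2}^t$ and integrating by parts in $s$ on the second piece to avoid the non-integrable weight $\|\Delta S(\tau)\|_{L^1}\sim\tau^{-1}$.

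The two delicate points are exactly those already met in Lemma \ref{reg}: the singularity of $S$ at $\tau=0$, circumvented by starting the Duhamel formula at a positive time $t_0$ at which $w(t_0)\in H^2$ is available; and the failure of $\nabla U_0$ to lie in $L^2$, which forces the use of $\|\nabla S(\tau)\|_{L^\alpha}$ with $\alpha\in(1,2)$ in place of the $L^1$ norm for those terms of $\nabla^m g_{U_0}(w)$ that carry a bare factor $\nabla U_0$ and therefore only lie in $L^r$, $r\in(1,2)$. Neither affects local smoothness, so the bootstrap closes; I expect the main — though entirely routine — chore of the write-up to be the bookkeeping of the Leibniz expansion of $\nabla^m g_{U_0}(w)$ into its $L^2$ and $L^r$ parts.
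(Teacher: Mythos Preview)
Your argument is correct and the spatial bootstrap matches the paper's almost exactly; the only cosmetic difference is that where you restart Duhamel from a positive time $t_0$, the paper instead splits $\int_0^t=\int_0^{t/2}+\int_{t/2}^t$, placing all $|k|$ derivatives on $S$ on the first piece (where $t-s\geq t/2$ is bounded below) and one on $S$, the remaining $|k|-1$ on $g_{U_0}$, on the second. The paper also fixes $r=4/3$ rather than leaving $r\in(1,2)$ generic. For time regularity, however, the paper is more elementary than either of your two suggestions: rather than invoking interior parabolic regularity or differentiating Duhamel in $t$, it runs a direct finite induction on the equation itself, noting that $w\in C^j(\mathbb{R}_+^*,C^{N}(\R))$ forces $\Delta w, f_{U_0}(w)\in C^j(\mathbb{R}_+^*,C^{N-2}(\R))$, so $\partial_t w=(a+ib)(\Delta w+f_{U_0}(w))\in C^j(\mathbb{R}_+^*,C^{N-2}(\R))$ and hence $w\in C^{j+1}(\mathbb{R}_+^*,C^{N-2}(\R))$. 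Two spatial derivatives are traded for each time derivative gained, which is harmless since infinitely many are available from the first part; this avoids all analysis of $\partial_t S$ and any appeal to black-box parabolic theory.
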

\begin{proof}

We proceed in several steps.

\medskip

\noindent \textbf{Step 1} Let $p\geq 2$ and $v\in H^p(\R)$. Then
$D^k\f(v)\in L^2(\R)+L^{\frac{4}{3}}(\R)$ for all $|k|\leq p$.

\medskip

\noindent \emph{Proof of Step 1.} We may assume in view of the proof
of Lemma \ref{reg} that $|k|\geq 2$. We decompose $\f(v)$ as
$\f(v)=\Delta U_0+\h(v)$, where
\begin{equation*}
 \h(v)=(U_0+v)(1-|U_0+v|^2).
\end{equation*}
Since $U_0\in \mathcal{V}$, it suffices to show that $D^k\h(v)\in
L^2(\R)+L^{\frac{4}{3}}(\R)$.  Applying Leibniz's formula to
$h_{U_0}(v)$, we obtain
\begin{equation*}
\begin{split}
D^k \h(v) &=\sum_{m\leq k}\binom{k}{m}D^{k-m} (U_0+v) D^{m}
(1-|U_0+v|^2)\\
 &=D^k(U_0+v)\\
 &-\sum_{\substack{m\leq k\\ n\leq m}}
\binom{k}{m}\binom{m}{n}D^{k-m} (U_0+v) D^{n} (U_0+v)\cdot
D^{m-n}(U_0+v).
\end{split}
\end{equation*}
Since $2\leq|k|\leq p$, $v\in H^p(\R)$ and $U_0\in \mathcal{V}$, we
clearly have $D^k(U_0+v)\in L^2(\R)$.

For the second term in the right-hand side, we write each product
inside the sum as
\begin{equation*}
D^a (U_0+v) D^b (U_0+v)\cdot D^c(U_0+v)
\end{equation*}
with $|a|+|b|+|c|=|k|\geq 2$, and we examine all cases. We observe
that $D^a (v+U_0)$ belongs to $H^1(\R)$ whenever $1\leq|a|\leq p-1$
and hence to $L^4(\R)$, whereas $D^a (v+U_0)$ belongs to $L^2(\R)$
for $2\leq |a|\leq p$. Since on the other hand $U_0+v\in L^\infty$,
we finally obtain
\begin{equation*}
D^a (U_0+v) D^b (U_0+v)\cdot D^c(U_0+v)\in
L^2(\R)+L^{\frac{4}{3}}(\R),
\end{equation*}
which yields the conclusion.

\medskip

We now turn to the regularity in space for a solution to
\eqref{cauchy}.

\medskip

\noindent \textbf{Step 2} Let $w\in C^0(\mathbb{R}_+,H^1(\R))$ be
the solution to \eqref{cauchy}. Then for all $p\geq 1$ we have $w\in
C^0(\mathbb{R}_+^\ast, H^{p}(\R))$.

\medskip

\noindent \emph{Proof of Step 2.} We proceed by induction on $p$.
The case $p=2$ has already been treated in Lemma \ref{reg}. Let us
thus assume that $w\in C^0(\mathbb{R}_+^\ast, H^{p}(\R))$ for some
$p\geq 2$. For $|k|\leq p+1$, we differentiate $w(t)$ and we find
\begin{equation*}
\begin{split}
D^k w(t)=D^k (S(t)\ast w_0)+D^k \int_0^t S(t-s)\ast g_{U_0}(s)\,ds
\end{split}
\end{equation*}
which we rewrite as
\begin{equation*}
\begin{split}
D^k w(t)=D^k S(t)& \ast w_0+\int_0^{t/2} (D^k S(t-s)) \ast
g_{U_0}(s)\,ds\\
&+\int_{t/2}^t D^{m} S(t-s)\ast D^{k-m} g_{U_0}(s)\,ds,
\end{split}
\end{equation*}
where $m$ is a multi-index so that $|m|=1$.

First, it follows from \eqref{decay2} that $t\mapsto D^k S(t) \ast
w_0\in C^0(\mathbb{R}_+^\ast, L^2(\R))$. Next, arguing that $g_{U_0}
\in C^0(\mathbb{R}_+, L^2(\R))$ and using \eqref{decay2} with $r=1$,
we find
\begin{equation*}
\begin{split}
\Big\| \int_0^{t/2} (D^k S(t-s)) \ast g_{U_0}(s)\,ds\Big\|_{L^2}\leq
C \int_0^{t/2} \frac{ds}{(t-s)^{\frac{|k|}{2}}}\leq
\frac{C}{t^{\frac{|k|}{2}-1}}.
\end{split}
\end{equation*}

On the other hand, since $|k-m|=|k|-1\leq p$ and since by assumption
$w(s)\in H^p(\R)$, Step 1 provides the decomposition
\begin{equation*}
D^{k-m}g_{U_0}(s)=d^1(s)+d^2(s)
\end{equation*}
where $d^1$ belongs to $C^0(\mathbb{R}_+^\ast, L^2(\R))$ and $d^2$
to $C^0(\mathbb{R}_+^\ast, L^{\frac{4}{3}}(\R))$. It follows from
\eqref{decay2} that
\begin{equation*}
\begin{split}
\Big\|\int_{t/2}^t D^m S(t-s)\ast D^{k-m}
g_{U_0}(s)\,ds\Big\|_{L^2}&\leq \int_{t/2}^t \|\nabla
S(t-s)\|_{L^1}\|d^1(s)\|_{L^2}\,ds\\
&\hspace{-6em}+\int_{t/2}^t \|\nabla
S(t-s)\|_{L^r}\|d^2(s)\|_{L^{\frac{4}{3}}}\,ds \\
&\leq C(t) \int_{t/2}^t
\left((t-s)^{-\frac{1}{2}}+(t-s)^{-\frac{1}{2}-1+\frac{1}{r}}\right)\,ds,
\end{split}
\end{equation*}
where $r$ satisfies $1+\frac{1}{2}=\frac{1}{r}+\frac{3}{4}$. The
last term is finite since $
\frac{1}{2}+1-\frac{1}{r}=\frac{3}{4}<1$, so we infer that $w \in
C^0(\mathbb{R}_+^\ast, H^{p+1}(\R))$, as we wanted.

\medskip

\noindent \textbf{Step 3} Let $w\in C^0(\mathbb{R}_+,H^1(\R))$ be
the solution to \eqref{cauchy}. Then we have  $w\in
C^k(\mathbb{R}_+^\ast, C^l(\R))$ for all $k,l\in \mathbb{N}$.

\medskip

\noindent \emph{Proof of Step 3.} For fixed $k,l\in \mathbb{N}$, we
show by induction on $0\leq j\leq k$ that $w\in
C^j(\mathbb{R}_+^\ast, C^{l+2k-2j}(\R))$.

\medskip
This holds for $j=0$ according to Step 2 and to Sobolev embeddings.
We assume next that $w\in C^j(\mathbb{R}_+^\ast, C^{l+2k-2j}(\R))$
for some $0\leq j\leq k-1$, and it follows that
\begin{equation*}
\Delta w,\: \f(w)\in C^j(\mathbb{R}_+^\ast,C^{l+2k-2j-2}(\R)).
\end{equation*}
So, going finally back to equation \eqref{cauchy}, we obtain
\begin{equation*}
 w \in
 C^{j+1}(\mathbb{R}_+^\ast,C^{l+2k-2j-2}(\R)).
\end{equation*}
This concludes the proof of Proposition \ref{reg2}.

\end{proof}

\subsection*{Acknowledgements} I warmly thank Didier Smets for his constant
support during the preparation of this work. I am also indebted to
Thierry Gallay and to Sylvia Serfaty for very helpful discussions.\\
 This work was partly supported by the grant JC05-51279 of
the Agence Nationale de la Recherche.



\end{document}